\documentclass[11pt,reqno]{amsart}
\usepackage[usenames]{color}
\usepackage{amsmath, amssymb, bm, float, latexsym, multicol, amsthm, color, enumitem, tensor, comment, mathrsfs, tikz}
\usepackage[all]{xy}
\usepackage{cancel}
\usepackage[T1]{fontenc}
\usepackage{hyperref}
\usepackage{cleveref}
\setlength{\textheight}{9in} \setlength{\textwidth}{6.5in}

\newtheoremstyle{remark}
{\parsep}   
{\parsep}   
{\normalfont}  
{0pt}       
{\bfseries} 
{.}         
{5pt plus 1pt minus 1pt} 
{}          
\theoremstyle{plain}
\newtheorem{thm}{Theorem}[section]

\newtheorem{lem}[thm]{Lemma}
\newtheorem{prp}[thm]{Proposition}

\theoremstyle{remark}

\newtheorem*{rmk}{Remark}

\numberwithin{equation}{section}

\crefname{thm}{theorem}{theorems}
\crefname{lem}{lemma}{lemmata}
\crefname{prp}{proposition}{propositions}
\crefname{cor}{corollary}{corollaries}
\crefname{figure}{figure}{figures}

\newcommand{\C}{\mathbb{C}}
\newcommand{\N}{\mathbb{N}}

\newcommand{\R}{\mathbb{R}}
\newcommand{\Z}{\mathbb{Z}}

\newcommand{\mc}{\mathcal}
\newcommand{\mf}{\mathfrak}

\newcommand{\cb}[1]{\left\{{#1}\right\}}
\newcommand{\cbm}[2]{\left\{{#1}\; \middle|\; {#2}\right\}}

\newcommand{\rb}[1]{\left({#1}\right)}
\newcommand{\Brb}[1]{\Big({#1}\Big)} 
\newcommand{\vb}[1]{\left| {#1} \right|}
\newcommand{\Bvb}[1]{\Big| {#1} \Big|} 
\newcommand{\bs}{\backslash}
\newcommand{\erfc}{\operatorname{erfc}}
\newcommand{\fa}{\;\forall}

\newcommand{\Li}{\operatorname{Li}}
\newcommand{\pb}{\operatorname{pb}}
\newcommand{\pd}{\operatorname{pd}}
\newcommand{\ppmod}[1]{\hspace{-0.2cm}\pmod{#1}}
\newcommand{\re}{\operatorname{Re}}
\newcommand{\sbe}{\subseteq}


\voffset -.7 in \hoffset -0.75 in
\parindent 10pt

\title{Distributions of parity differences and biases in partitions into distinct parts}

\author{Siu Hang Man}
\address{Charles University, Faculty of Mathematics and Physics, Department of Algebra, Sokolovská 83, 186~75 Praha~8, Czechia}
\email{shman@karlin.mff.cuni.cz}
\thanks{This project has received funding from the European Research Council (ERC) under the European Union’s Horizon 2020 research and innovation programme (grant agreement No. 101001179). The project is also supported by the Czech Science Foundation GAČR grant 21-00420M, Charles University programme PRIMUS/24/SCI/010 and PRIMUS/25/SCI/008, as well as the OP RDE project No. CZ.02.2.69/0.0/0.0/18\_053/0016976 International mobility of research, technical and administrative staff at the Charles University.}

\begin{document}

\begin{abstract}
For a partition $\lambda \vdash n$, we let $\pd(\lambda)$, the parity difference of $\lambda$, be the number of odd parts of $\lambda$ minus the number of even parts of $\lambda$. We prove for $c_0\in\R$ an asymptotic expansion for the number of partitions of $n$ into distinct parts with normalised parity difference $n^{- 1/4}\pd(\lambda)$ greater than $c_0$ as $n\to \infty$. As a corollary, we find the distribution of the parity differences and parity biases for partitions of $n$ into distinct parts. We also establish analogous results for generalised parity differences modulo $N$.
\end{abstract}
\date{}
\makeatletter
\@namedef{subjclassname@2020}{%
  \textup{2020} Mathematics Subject Classification}
\makeatother
\subjclass[2020]{05A17, 11P82}
\keywords{partition, parity bias, distribution, asymptotic formula}
\maketitle

\section{Introduction}

A \emph{partition} of a non-negative integer $n$ is a non-increasing sequence of positive integers whose sum is $n$. Given a partition $\lambda \vdash n$, we denote by $\pd(\lambda)$ the \emph{parity difference} of $\lambda$, defined as the number of odd parts of $\lambda$ minus the number of even parts of $\lambda$. For example, for $\lambda = (4,3,1) \vdash 8$ we have $\pd(\lambda) = 2-1 = 1$. 

Back in 2007, Dartyge and Szalay \cite{DS07DR} proved that for $n\in\N$ sufficiently large, there are more (unrestricted) partitions of $n$ with positive parity differences than partitions of $n$ with negative parity differences. This phenomenon is now known as \emph{parity bias} in partitions. Recently, Kim, Kim, and Lovejoy \cite{KKL20PB} employed also combinatorial arguments, and showed that the statement holds true whenever $n\ge 3$. It was then shown in \cite{BBDMS22PB,BMRS24AP} that parity bias (or reverse parity bias, i.e. having more partitions with negative parity differences) occurs in a wide family of partitions, such as partitions into distinct parts, and other kinds of restricted partitions. One could also compare the number of parts of a partition which lie in residue classes modulo $N$; parity bias is the case where $N=2$. Such results are found in \cite{BMRS24AP,Che22FR,DS07DR,KK21BI}. 

While there are parity biases for both unrestricted partitions and partitions into distinct parts, they have rather different asymptotic behaviours. Let $\mc P(n)$ (resp. $\mc D(n)$) denote the set of partitions (resp. partitions into distinct parts) of $n$. Denote
\begin{align*}
p_o(n) &:= \#\cbm{\lambda \in \mc P(n)}{\pd(\lambda) > 0}, & p_e(n) &:= \#\cbm{\lambda \in \mc P(n)}{\pd(\lambda) < 0},\\
d_o(n) &:= \#\cbm{\lambda \in \mc D(n)}{\pd(\lambda) > 0}, & d_e(n) &:= \#\cbm{\lambda \in \mc D(n)}{\pd(\lambda) < 0}.\\
\end{align*}
It was first shown in \cite{DS07DR,DS11LD,DS12LD} and later reproved in \cite{BMRS24AP,KKL20PB} that
\begin{align*}
\lim_{n\to\infty} \frac{p_o(n)}{p_e(n)} &= 1+\sqrt{2}, & \lim_{n\to\infty} \frac{d_o(n)}{d_e(n)} &= 1.
\end{align*}
While it is known from \cite{BBDMS22PB} that $d_o(n)>d_e(n)$ for all $n\ge 20$, the asymptotic formula above says that $d_e(n)$ is almost as large as $d_o(n)$. On the other hand, the asymptotic formula by Dartyge, Sárközy, and Szalay \cite{DSS06DS} shows that almost all partitions into distinct parts have approximately the same number of odd and even parts. This means that the parity difference $\pd(\lambda)$ is usually small. The goal of the paper is to study the asymptotic distribution of the parity differences $\pd(\lambda)$ for $\lambda \in \mc D(n)$ as $n\to \infty$, as well as the distribution of the parity biases.

Let $N\in\N_{\ge 2}$, $1\le \alpha, \beta \le N$, and $\alpha\ne\beta$. For a partition $\lambda = (\lambda_1,\ldots,\lambda_l) \vdash n$, we define
\[
\pd_{\alpha,\beta;N}(\lambda) := \#\cb{\lambda_i \equiv \alpha\ppmod{N}} - \#\cb{\lambda_i \equiv \beta\ppmod{N}}
\]
the \emph{parity difference} of $\lambda$ with respect to the residue classes $\alpha,\beta\pmod{N}$. The standard parity difference $\pd(\lambda)$ introduced above is then identified as $\pd(\lambda) = \pd_{1,2;2}(\lambda)$. To count the number of partitions with given parity difference, we introduce the following notations. Let $c\in\R$ be a parameter. We write 
\[
\mc D_{\alpha,\beta;N;c}(n) := \cbm{\lambda \in \mc D(n)}{\pd_{\alpha,\beta;N}(\lambda) \ge c}
\]
the set of partitions with parity differences at least $c$ with respect to the residue classes $\alpha,\beta\pmod{N}$, and $d_{\alpha,\beta;N;c}(n) := \# \mc D_{\alpha,\beta;N;c}(n)$. For our purpose, we shall set $c = c_0 n^{1/4}$, where $c_0 \in \R$ is a fixed constant. Our first main theorem gives an asymptotic formula for $d_{\alpha,\beta;N;c_0 n^{1/4}}(n)$ as $n\to \infty$. To state the theorem, we need some notations. For $\ell\in\Z$, we let $[\ell]_N$ denote the smallest non-negative integer congruent to $\ell\pmod{N}$. For fixed $c_0\in\R$, we write $\partial = \partial(n) := \lceil c_0n^{1/4}\rceil - c_0n^{1/4}$.

\begin{thm}\label{thm:main1}
Let $N\in\N_{\ge 2}$, $1\le\alpha,\beta\le N$, $\alpha\ne\beta$, and $c_0\in\R$ a fixed constant. Then we have as $n\to\infty$
\begin{multline}\label{eq:main1}
d_{\alpha,\beta;N;c_0n^{1/4}}(n) = e^{\pi\sqrt{n/3}} n^{-3/4} \sum_{\substack{\bm\ell\in(\Z/N\Z)^N\\ NH(\bm\ell) \equiv n \ppmod{N}}} \left(\frac{1}{8\sqrt[4]{3}N^{N-1}} \erfc\rb{\frac{c_0\sqrt{\pi N}}{2\sqrt[4]{3}}}\right.\\
\left.{}+ \frac{e^{-c_0^2\pi N/4\sqrt{3}} (N^2-2\partial^*N+(\beta-\alpha))}{16\sqrt{3} N^{N-1/2} n^{1/4}} + O\rb{n^{-1/2}}\right).
\end{multline}
Here $\erfc(x)$ denotes the complementary error function, $H$ is a quadratic polynomial given in \eqref{eq:H_def}, and $\partial^*$ is given by $\partial^* = \partial^*(n,\bm\ell) := \partial + [\ell_\alpha - \ell_\beta - \lceil c_0 n^{1/4} \rceil]_N$.
\end{thm}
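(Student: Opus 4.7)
The plan is to read off $d_{\alpha,\beta;N;c_0n^{1/4}}(n)$ as a contour integral of a suitable two-variable generating function and to analyse it by a refined saddle-point method near $q=1$, where the Gaussian fluctuation of $\pd_{\alpha,\beta;N}$ around $0$ produces the complementary error function. First I would introduce the multivariable generating function
\begin{equation*}
F(q;z_1,\dots,z_N) := \prod_{k=1}^\infty \rb{1 + z_{[k]_N}\,q^k},
\end{equation*}
whose coefficient of $q^n z_1^{m_1}\cdots z_N^{m_N}$ counts $\lambda\in \mc D(n)$ with exactly $m_j$ parts congruent to $j\pmod N$. Specialising $z_\alpha=z$, $z_\beta=z^{-1}$ and $z_i=1$ otherwise yields a two-variable series $G(q,z)$ tracking $\pd_{\alpha,\beta;N}$, and Cauchy's theorem gives
\begin{equation*}
d_{\alpha,\beta;N;c_0n^{1/4}}(n) \;=\; \frac{1}{(2\pi i)^2}\oint\oint \frac{G(q,z)\,dq\,dz}{q^{n+1}(z-1)\,z^{\lceil c_0n^{1/4}\rceil-1}}.
\end{equation*}
Next I would decompose this integral according to the residues modulo $N$ of the part-counts. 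Writing $m_j = NA_j+\ell_j$ with $0\le\ell_j<N$ and using that the minimal sum of $m_j$ distinct positive integers in the class $j\pmod N$ equals $jm_j+N\binom{m_j}{2}$, a short congruence-modulo-$N$ calculation gives $NH(\bm\ell)\equiv n\pmod N$ for the quadratic polynomial $H$ of \eqref{eq:H_def}, so that the coefficient extraction decomposes as a sum over admissible $\bm\ell\in(\Z/N\Z)^N$ of explicit factored subseries, each having its leading singularity at $q=1$.

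Then I would carry out the saddle-point analysis. Using classical modular asymptotics for $(q;q)_\infty$ and $(q^2;q^2)_\infty$, expand $G(e^{-t},e^{w/n^{1/4}}) \sim A(w)\,t^{1/2}\exp\rb{B(w)/t}$ as $t\to 0^+$, where $B(w)$ is quadratic in $w$ with coefficients coming from dilogarithms. Choose the $q$-saddle $t_n\sim \pi/\sqrt{3n}$ (this alone produces the prefactor $e^{\pi\sqrt{n/3}}n^{-3/4}$) and evaluate the Gaussian integral in $w$ around $w=0$. The restriction $j\ge\lceil c_0n^{1/4}\rceil$ converts this into a Gaussian tail from $c_0$ to $+\infty$, yielding $\tfrac12\erfc\rb{c_0\sqrt{\pi N}/(2\sqrt[4]3)}$ after matching the variance, while the average over the $N^{N-1}$ admissible residue vectors accounts for the denominator $N^{N-1}$ in the leading term. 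The subsequent $n^{-1/4}$ correction arises from two discrete sources: the fractional part $\partial=\lceil c_0n^{1/4}\rceil - c_0n^{1/4}$, and the block-wise shift $[\ell_\alpha-\ell_\beta-\lceil c_0n^{1/4}\rceil]_N$ coming from the $\bm\ell$-decomposition; together these combine into $\partial^*$, and Taylor-expanding the Gaussian at $c_0$ produces the density factor $e^{-c_0^2\pi N/(4\sqrt3)}$ times the algebraic coefficient $N^2-2\partial^*N+(\beta-\alpha)$.

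The main obstacle I anticipate is the careful bookkeeping of the discreteness corrections $\partial$ and $\partial^*$: the latter depends on $\bm\ell$, so the averaging over admissible residue vectors does \emph{not} wash out but instead contributes nontrivially to the $n^{-1/4}$ term. Second in difficulty is the minor-arc estimate — away from a shrinking arc around $q=1$ one must show that $G(q,z)$ is exponentially smaller uniformly in $z$ on the relevant contour, which follows the Wright/Meinardus recipe for partitions into distinct parts but has to be controlled uniformly in the twisting parameter. Matching the explicit numerical constants $\frac{1}{8\sqrt[4]3\,N^{N-1}}$ and $\frac{1}{16\sqrt3\,N^{N-1/2}}$ to the saddle-point output is then unavoidable but routine bookkeeping.
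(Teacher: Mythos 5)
Your proposed route — a bivariate generating function $G(q,z)$ and a double Cauchy integral in $q$ and $z$ — is genuinely different in shape from what the paper does. The paper works with the $q$-series expansion as a sum over the $\bm m$-tuple of part counts (the same sum you describe in your residue-modulo-$N$ step), truncates it to a compact region $\mc N_{\varepsilon,\lambda}$, and then applies Euler--Maclaurin summation in each coordinate to replace the discrete $\bm m$-sum by a Gaussian integral, with explicit boundary correction polynomials $W_r(\kappa)$. This Euler--Maclaurin step is explicitly flagged by the authors as the main novelty relative to Dartyge--Szalay, because the boundary terms are precisely where the parity bias (the $n^{-1/4}$ coefficient) lives. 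Your $z$-contour would, in principle, encode the same boundary corrections through the pole at $z=1$ and the Bernoulli expansion of $1/(e^{w/n^{1/4}}-1)$, so the idea is not unsound; but the sketch never surfaces that mechanism, which is the heart of the secondary term.

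There are two concrete gaps. First, the asymptotic you invoke — ``classical modular asymptotics'' to get $G(e^{-t},e^{w/n^{1/4}})\sim A(w)\,t^{1/2}e^{B(w)/t}$ — is exactly the step the paper identifies as insufficient: the holomorphic expansion of the generating function near a root of unity is valid only in a narrow cone $\vb{y}\ll\varepsilon^{1/3+\delta}$ and cannot cover the major arc. The paper has to import the non-holomorphic wide-range expansion (\Cref{prp:wide_asymp}, from BMRS24) to bridge $\varepsilon^{1/2-\delta}\ll \vb{y}\ll\varepsilon^{-1-3\lambda/2+\delta}$; you also need uniformity in $z$ on an annulus of radius $\asymp e^{n^{-1/4}}$, and $G(q,z)$ for general $N$ is a twisted theta quotient with no clean modular transformation to lean on. Second, the attribution of the $n^{-1/4}$ coefficient $N^2-2\partial^*N+(\beta-\alpha)$ is off. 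The $-2\partial^*N$ piece does come from $\kappa=c_0n^{1/4}+\partial^*$ via Taylor expansion of $\erfc$, and the $\beta-\alpha$ piece comes from the first correction polynomial $C_1(\bm u)$ in the saddle-point expansion. But the $N^2$ piece comes from the Euler--Maclaurin boundary term $\tfrac12 f(a)$ (the paper's $A_{0,1,\bm\ell}^{[1]}$, coming from $W_0(\kappa)$), reflecting that the sum over $u_\alpha$ runs over a lattice of spacing $N\sqrt z$; your explanation (``$\partial$, the block-wise shift, and Taylor-expanding the Gaussian at $c_0$'') does not produce this term and would leave the second-order coefficient wrong.
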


From the expression \eqref{eq:main1}, we see that the main term is not sensitive to the chosen residue classes $\alpha$ and $\beta$. However, in the second term of the asymptotic formula we see the dependence of $\alpha$ and $\beta$. In particular, the functions $d_{\alpha,\beta;N;c_0n^{1/4}}(n)$ and $d_{\beta,\alpha;N;c_0n^{1/4}}$ have different asymptotic behaviours, illustrating the parity bias phenomenon. Beside the parity bias, we also see a contribution from $\partial$ (or $\partial^*$), which signifies the discontinuity when the threshold $c_0 n^{1/4}$ grows over an integer. In general, the asymptotic behaviour also depends on the congruence class of $n\pmod N$.

If $N=2$ or $N\ge 5$, then the asymptotic formula \eqref{eq:main1} can be significantly simplified. In particular, in this case the asymptotic formula does not depend on the congruence classes of $n$ and $\lceil c_0 n^{1/4} \rceil \pmod{N}$.

\begin{thm}\label{thm:main1a}
Suppose $N=2$ or $N\ge 5$. Then we have as $n\to\infty$
\begin{multline*}
d_{\alpha,\beta;N;c_0n^{1/4}}(n) = e^{\pi\sqrt{n/3}} n^{-3/4}\\
\times \rb{\frac{1}{8\sqrt[4]{3}} \erfc\rb{\frac{c_0\sqrt{\pi N}}{2\sqrt[4]{3}}} + \frac{e^{-c_0^2\pi N/4\sqrt{3}}}{16\sqrt{3N}} \rb{\beta-\alpha+N-2N\partial} n^{-1/4} + O\rb{n^{-1/2}}}.
\end{multline*}
\end{thm}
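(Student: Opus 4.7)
The plan is to derive Theorem \ref{thm:main1a} directly from Theorem \ref{thm:main1} by evaluating the sum over $\bm\ell$ and showing that, in the range $N=2$ or $N\ge 5$, the inner expression collapses to the cleaner form. The entire simplification rests on two counting claims about the indexing set $S_n := \cbm{\bm\ell\in(\Z/N\Z)^N}{NH(\bm\ell)\equiv n\pmod N}$: (i) $|S_n| = N^{N-1}$ independently of $n\pmod N$; and (ii) for every $r\in\Z/N\Z$, the level set $\cbm{\bm\ell\in S_n}{\ell_\alpha-\ell_\beta \equiv r\pmod N}$ has exactly $N^{N-2}$ elements. Claim (ii) says that $\ell_\alpha-\ell_\beta$ is equidistributed on $S_n$, which is what makes $\partial^*$ on average independent of $\bm\ell$ up to the shift by $\partial$.

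First I would unpack the explicit formula for $H$ from \eqref{eq:H_def}. I expect $NH(\bm\ell)$ to decompose as a linear form $\sum_i i\ell_i$ plus a quadratic correction of the shape $\tfrac{N}{2}\sum_i\ell_i(\ell_i-1)$ arising from the minimal sum of distinct parts in each residue class modulo $N$. Modulo $N$ the congruence $NH(\bm\ell)\equiv n\pmod N$ then reduces to a single affine condition in $\bm\ell$, and a change of variable in a coordinate $i_0\ne\alpha,\beta$ whose coefficient is coprime to $N$ establishes (i) and (ii) simultaneously. The restriction $N=2$ or $N\ge 5$ enters precisely here: one must be able to prescribe $\ell_\alpha-\ell_\beta\pmod N$ freely while still retaining a free coordinate to absorb the $NH$-congruence, a flexibility that fails for $N\in\cb{3,4}$ where the quadratic correction may couple $\ell_\alpha-\ell_\beta$ to $n\pmod N$.

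With (i) and (ii) in hand the rest is bookkeeping. Using (ii) and $\sum_{r=0}^{N-1}r = N(N-1)/2$ one computes
\[
\sum_{\bm\ell\in S_n}\partial^* = \partial\,|S_n| + \sum_{\bm\ell\in S_n}[\ell_\alpha-\ell_\beta-\lceil c_0n^{1/4}\rceil]_N = N^{N-1}\Brb{\partial+\tfrac{N-1}{2}}.
\]
Substituting into \eqref{eq:main1}, the constant term contracts to $\tfrac{1}{8\sqrt[4]3}\erfc\rb{\tfrac{c_0\sqrt{\pi N}}{2\sqrt[4]3}}$ after cancelling $N^{N-1}$, while the coefficient of $n^{-1/4}$ becomes
\[
\frac{e^{-c_0^2\pi N/4\sqrt{3}}\bigl(N^{N-1}(N^2+\beta-\alpha)-N^N(N-1)-2N^N\partial\bigr)}{16\sqrt{3}\,N^{N-1/2}} = \frac{e^{-c_0^2\pi N/4\sqrt{3}}(\beta-\alpha+N-2N\partial)}{16\sqrt{3N}},
\]
which matches the stated expression.

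The main obstacle will be verifying (ii) cleanly for $N\ge 5$ when $N$ is even, since the quadratic term $\tfrac{N}{2}\ell_i(\ell_i-1)$ then contributes a genuine obstruction modulo $N$. I expect this to reduce to a short case analysis on $N\pmod 2$, isolating a coordinate $i_0$ distinct from $\alpha$ and $\beta$ on which the quadratic contribution vanishes modulo $N$, so that the system linearises and the standard equidistribution argument applies.
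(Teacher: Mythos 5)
Your reduction of \Cref{thm:main1a} to \Cref{thm:main1} via the counting claims (i) and (ii) mirrors the paper's strategy (the paper cites \Cref{lem:l_count}), and your algebraic bookkeeping is correct: granting (i) and (ii), the sum over $\bm\ell$ collapses to the stated coefficient exactly as you compute. The genuine gap is in your proposed justification of claim (ii), and it stems from a wrong picture of $NH(\bm\ell)\bmod N$.

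The quadratic part of $NH(\bm\ell)$, namely $\tfrac{N}{2}\sum_j\ell_j(\ell_j-1)$, vanishes modulo $N$ for \emph{every} $N$, even or odd, because $\ell_j(\ell_j-1)$ is always even; so $NH(\bm\ell)\equiv\sum_j j\ell_j\pmod N$ is a purely linear form, and the anticipated ``case analysis on $N\pmod 2$'' and the search for a coordinate ``on which the quadratic contribution vanishes'' rest on a false premise. The real obstruction for $N\in\{3,4\}$ lies in the linear coefficients: for suitable $\alpha,\beta$ every index $j\notin\{\alpha,\beta\}$ shares a factor with $N$ (for $N=3$, $\{\alpha,\beta\}=\{1,2\}$, the lone free coefficient is $3\equiv0$), so the congruence couples $\ell_\alpha-\ell_\beta$ to $n$ and (ii) fails outright. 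Moreover, even for $N\ge5$ the ``single free coordinate whose coefficient is coprime to $N$'' is not always available: for $N=6$ and $\{\alpha,\beta\}=\{1,5\}$ the remaining coefficients $\{2,3,4,6\}$ are all non-coprime to $6$. What is actually needed, and what does hold for all $N\ge5$, is the weaker condition $\gcd\big(N,\gcd_{j\notin\{\alpha,\beta\}}j\big)=1$, which still forces equidistribution of the linear form but requires a more careful argument than the one you sketch. Finally, for $N=2$ there is no index $i_0\ne\alpha,\beta$ at all, so the change-of-variable argument is vacuous there; claim (ii) does still hold for $N=2$, but it must be checked directly, which is precisely what the paper does by treating $N=2$ as a separate explicit verification.
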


Using the main term of the asymptotic formula \eqref{eq:main1}, we obtain the distribution of the parity differences $\pd_{\alpha,\beta;N}(\lambda)$ for $\lambda \in \mc D(n)$, as $n\to \infty$. This recovers a result of Dartyge--Szalay \cite{DS11LD,DS12LD}.
\begin{thm}\label{thm:main2}
Let $N\in\N_{\ge 2}$, $1\le \alpha, \beta \le N$, $\alpha\ne\beta$. Then, as $n\to\infty$, the normalised parity differences $n^{-1/4} \pd_{\alpha,\beta;N}(\lambda)$ for $\lambda\in\mc D(n)$ are normally distributed with mean $0$ and variance $\frac{2\sqrt{3}}{\pi N}$. In other words, for any real numbers $a\le b$, we have
\[
\lim_{n\to\infty} \frac{\#\cbm{\lambda\in\mc D(n)}{a \le n^{-1/4}\pd_{\alpha,\beta;N}(\lambda) \le b}}{\#\mc D(n)} = \frac{\sqrt{N}}{2\sqrt[4]{3}}\int_a^b e^{-\frac{\pi Nx^2}{4\sqrt{3}}} dx.
\]
\end{thm}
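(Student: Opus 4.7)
The plan is to derive Theorem~\ref{thm:main2} directly from Theorem~\ref{thm:main1}, using the standard trick of writing a probability on an interval as a difference of one-sided counts. Since $\pd_{\alpha,\beta;N}(\lambda)$ is integer-valued, I would first write
\[
\#\cbm{\lambda \in \mc D(n)}{a \le n^{-1/4}\pd_{\alpha,\beta;N}(\lambda) \le b} = d_{\alpha,\beta;N;an^{1/4}}(n) - d_{\alpha,\beta;N;(b+\delta_n)n^{1/4}}(n),
\]
where $\delta_n$ is any sequence satisfying $n^{-1/4} < \delta_n = o(1)$; this ensures the subtracted $d$-count records exactly those $\lambda$ with $\pd_{\alpha,\beta;N}(\lambda) > bn^{1/4}$, which is the complementary piece needed.

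Next, I would divide through by $\#\mc D(n) = \frac{1}{4\sqrt[4]{3}} e^{\pi\sqrt{n/3}} n^{-3/4}(1+o(1))$. This asymptotic is the classical Hardy--Ramanujan formula for partitions into distinct parts, and is also consistent with (indeed recovered from) Theorem~\ref{thm:main1} upon letting $c_0\to-\infty$: the complementary error function tends to $2$, the secondary term vanishes, and the sum over the $N^{N-1}$ admissible vectors $\bm\ell$ cancels the $N^{N-1}$ in the denominator. Applying Theorem~\ref{thm:main1} to each of the two $d$-counts, the main terms survive the normalisation while the secondary terms (already of order $n^{-1/4}$) and the $O(n^{-1/2})$ errors vanish, uniformly in $\partial^*$ and $\bm\ell$; so
\[
\frac{d_{\alpha,\beta;N;an^{1/4}}(n) - d_{\alpha,\beta;N;(b+\delta_n)n^{1/4}}(n)}{\#\mc D(n)} = \frac{1}{2}\erfc\rb{\frac{a\sqrt{\pi N}}{2\sqrt[4]{3}}} - \frac{1}{2}\erfc\rb{\frac{b\sqrt{\pi N}}{2\sqrt[4]{3}}} + O(n^{-1/4}).
\]

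Finally, I would rewrite the difference of $\erfc$ values as the claimed Gaussian integral. Using $\erfc(y) = \frac{2}{\sqrt{\pi}}\int_y^\infty e^{-t^2}\,dt$ and substituting $t = \frac{\sqrt{\pi N}}{2\sqrt[4]{3}}x$, one obtains
\[
\frac{1}{2}\erfc\rb{\frac{a\sqrt{\pi N}}{2\sqrt[4]{3}}} - \frac{1}{2}\erfc\rb{\frac{b\sqrt{\pi N}}{2\sqrt[4]{3}}} = \frac{\sqrt{N}}{2\sqrt[4]{3}}\int_a^b e^{-\pi N x^2/(4\sqrt{3})}\,dx,
\]
which identifies the limiting law as normal with mean $0$ and variance $2\sqrt{3}/(\pi N)$, as claimed. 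There is no genuine obstacle: all the analytic work (saddle-point / circle-method analysis) is done upstream in Theorem~\ref{thm:main1}, and the only mild bookkeeping point — that the secondary $O(n^{-1/4})$ term depends on $\partial^*$, on $\bm\ell$, and on the class of $n\pmod N$ — is harmless, since this term is absorbed into the vanishing error after normalisation.
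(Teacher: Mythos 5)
Your argument follows the paper's route exactly: apply Theorem~\ref{thm:main1} to the one-sided count, normalise by $d(n)$ using Hua's asymptotic~\eqref{eq:Hua}, observe that only the $\tfrac12\erfc$ main term survives the limit, and identify the $\erfc$ difference with the Gaussian integral. One small slip worth noting: for the subtracted count to capture $\pd_{\alpha,\beta;N}(\lambda) > bn^{1/4}$ you would want $\delta_n n^{1/4} \le 1$ (so $\delta_n \le n^{-1/4}$), not $\delta_n > n^{-1/4}$ as written; in any case the discrepancy from whichever $\delta_n$ you choose is confined to $O(\delta_n n^{1/4})$ integer levels of $\pd$, each contributing $O(d(n)\,n^{-1/4})$ by Theorem~\ref{thm:main1}, so it vanishes after dividing by $d(n)$ and does not affect the conclusion.
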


When we consider also the second term of the asymptotic formula, we obtain also the distribution of parity biases. Before we state the theorem, we make precise what this means. For $n\in\N$, $1\le \alpha,\beta\le N$, $\alpha\ne\beta$, and $c\in\N_0$, we define the \emph{parity bias} of the set $\mc D(n)$ with bias $c$ with respect to the residue classes $\alpha,\beta \pmod{N}$ to be the difference
\[
\pb_{\alpha,\beta;N}(\mc D(n),c) := \#\cbm{\lambda \in \mc D(n)}{\pd_{\alpha,\beta;N}(\lambda) = c} - \#\cbm{\lambda \in \mc D(n)}{\pd_{\beta,\alpha;N}(\lambda) = c}.
\]
The following theorem gives the distribution of the parity biases.
\begin{thm}\label{thm:main2a}
Let $N=2$ or $N\ge 5$, $1\le \alpha,\beta\le N$, $\alpha\ne\beta$. Then, for any real numbers $0\le a\le b$, we have
\[
\lim_{n\to\infty} \frac{\sum_{a\le n^{-1/4}c\le b} \pb_{\alpha,\beta;N}(\mc D(n), c)}{d_{\alpha,\beta;N;0} - d_{\beta,\alpha;N;0}} = \frac{\pi N}{2\sqrt{3}} \int_a^b x e^{-\frac{\pi N x^2}{4\sqrt{3}}} dx.
\]
\end{thm}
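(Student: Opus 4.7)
The plan is to express the numerator in the ratio as a telescoping sum, collapsing it to the endpoint values of the difference $d_{\alpha,\beta;N;\cdot}(n) - d_{\beta,\alpha;N;\cdot}(n)$, which are then controlled by \cref{thm:main1a}, and to compare with the denominator, which is just the special case $c_0=0$ of the same difference.

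First I would observe the elementary identity
\[
\pb_{\alpha,\beta;N}(\mc D(n),c) = \rb{d_{\alpha,\beta;N;c}(n) - d_{\beta,\alpha;N;c}(n)} - \rb{d_{\alpha,\beta;N;c+1}(n) - d_{\beta,\alpha;N;c+1}(n)},
\]
which follows from $\#\cbm{\lambda\in\mc D(n)}{\pd_{\alpha,\beta;N}(\lambda)=c} = d_{\alpha,\beta;N;c}(n) - d_{\alpha,\beta;N;c+1}(n)$ and its analogue with $\alpha,\beta$ swapped. Writing $A := \lceil an^{1/4}\rceil$ and $B := \lfloor bn^{1/4}\rfloor$, telescoping reduces the numerator to
\[
\rb{d_{\alpha,\beta;N;A}(n) - d_{\beta,\alpha;N;A}(n)} - \rb{d_{\alpha,\beta;N;B+1}(n) - d_{\beta,\alpha;N;B+1}(n)}.
\]

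Next I would apply \cref{thm:main1a} to each term. Two symmetries make this clean: the leading $\erfc$ contribution is independent of $(\alpha,\beta)$, and the quantity $\partial$ depends only on $n$ and the threshold, not on $(\alpha,\beta)$. Both therefore cancel in the difference, leaving only the antisymmetric piece:
\[
d_{\alpha,\beta;N;c_0n^{1/4}}(n) - d_{\beta,\alpha;N;c_0n^{1/4}}(n) = \frac{\beta-\alpha}{8\sqrt{3N}}\, e^{-c_0^2\pi N/4\sqrt{3}}\, e^{\pi\sqrt{n/3}} n^{-1} + O\rb{e^{\pi\sqrt{n/3}} n^{-5/4}}.
\]
Substituting $c_0 = A/n^{1/4} = a + O(n^{-1/4})$ and $c_0 = (B+1)/n^{1/4} = b + O(n^{-1/4})$, and using continuity of $x\mapsto e^{-\pi N x^2/4\sqrt{3}}$ to absorb the $O(n^{-1/4})$ perturbation into a $1+o(1)$ factor, the numerator becomes asymptotically
\[
\frac{\beta-\alpha}{8\sqrt{3N}} e^{\pi\sqrt{n/3}} n^{-1}\rb{e^{-a^2\pi N/4\sqrt{3}} - e^{-b^2\pi N/4\sqrt{3}}}.
\]
The denominator is the $c_0=0$ specialisation with prefactor $\frac{\beta-\alpha}{8\sqrt{3N}} e^{\pi\sqrt{n/3}} n^{-1}$, so dividing yields the limit $e^{-a^2\pi N/4\sqrt{3}} - e^{-b^2\pi N/4\sqrt{3}}$. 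An elementary substitution $u = \pi N x^2/4\sqrt{3}$ then confirms this equals $\frac{\pi N}{2\sqrt{3}}\int_a^b xe^{-\pi N x^2/4\sqrt{3}}\,dx$.

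The main delicate point will be the cancellation just invoked. The $\erfc$ leading term of \cref{thm:main1a} is of order $e^{\pi\sqrt{n/3}} n^{-3/4}$, dominating the surviving $n^{-1}$ contribution by a factor of $n^{1/4}$, so the cancellation between $(\alpha,\beta)$ and $(\beta,\alpha)$ must be genuinely exact, not merely to leading order; this is visible directly from \cref{thm:main1a}, where neither the $\erfc$ term nor the $\partial$-dependent factor carries any $(\alpha,\beta)$-dependence. With this recognised, the remaining manipulations---the telescoping and the integral substitution---are routine.
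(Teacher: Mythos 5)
Your proposal is correct and takes essentially the same route as the paper: rewrite the numerator as a telescoping sum so that it collapses to the differences $d_{\alpha,\beta;N;an^{1/4}}(n)-d_{\beta,\alpha;N;an^{1/4}}(n)$ minus the corresponding difference at $b$, then apply \Cref{thm:main1a} and observe that the $\erfc$ main term and the $\partial$-dependent part are independent of $(\alpha,\beta)$ and so cancel exactly, leaving the antisymmetric $(\beta-\alpha)e^{-c_0^2\pi N/4\sqrt 3}$ contribution. The only cosmetic difference is that the paper cites the $c_0=0$ asymptotic for the denominator from an external reference, whereas you obtain it as the $c_0=0$ specialisation of the same cancellation; also, rather than substituting the $n$-dependent $c_0=A/n^{1/4}$ into a theorem stated for fixed $c_0$, it is cleaner to note directly that $d_{\alpha,\beta;N;\lceil an^{1/4}\rceil}(n)=d_{\alpha,\beta;N;an^{1/4}}(n)$ and apply \Cref{thm:main1a} with $c_0=a$, letting the $\partial$ term absorb the rounding.
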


In particular, this says the parity bias is the greatest when the bias is around $\sqrt[4]{12}/\sqrt{\pi N}$. 

\begin{rmk}
For simplicity, we only stated the distribution of parity biases for $N=2$ and $N\ge 5$. For $N=3,4$, because the asymptotic behaviour of $d_{\alpha,\beta;N,c_0n^{1/4}}(n)$ depends also on the congruence classes of $n$ and $\lceil c_0n^{1/4}\rceil \pmod{N}$, we can have such a distribution result only if we restrict $n$ and $\lceil c_0n^{1/4}\rceil$ to particular congruence classes, or consider an average over multiple values of $n$. 
\end{rmk}

The main tool for the proof of \Cref{thm:main1} is the circle method, more specifically the saddle point method. A classical application of the circle method sees that we write the generating function for $d_{\alpha,\beta;N;c}(n)$ as a $q$-series (see \eqref{eq:generating_function}), which is known already in \cite{DS11LD,DS12LD}, and derive an asymptotic expansion for the generating function as $q$ approaches a root of unity, as holomorphic functions in $z$ (where $q = e^{-z}$). 

Unfortunately, the asymptotic expansion constructed this way only holds for a narrow cone around the positive real line and is insufficient for the computation of the major arc contribution. As a remedy, we use an alternative asymptotic expansion (see \Cref{prp:wide_asymp}) from \cite{BMRS24AP}, which is no longer holomorphic in $z$, but holds in a sufficiently large region so that the circle method can be applied. The main novelty of this article compared to the approach in \cite{DS11LD,DS12LD} is the use of Euler--Maclaurin summation formula, which allows the extraction of not just the main term but also other terms of smaller magnitude, revealing the parity biases, whose sizes are shadowed by the main term. With a careful analysis of all the integrals that arise, we are able to show in \Cref{prp:d_asymp} that $d_{\alpha,\beta;N,c_0n^{1/4}}(n)$ admits an asymptotic expansion in $n^{-1/4}$, whose coefficients are completely explicit and depend only on $\alpha,\beta,c_0,N,\partial$, as well as $n,\lceil c_0n^{1/4}\rceil \pmod{N}$. 

The paper is organised as follows. In \Cref{section:prelim}, we recollect the asymptotic formulae that we use in later sections, mainly from the toolbox in \cite{BMRS24AP}. In \Cref{section:asymp}, we compute the asymptotic expansions arising from the circle method. In \Cref{section:construction} we find an explicit characterisation of the asymptotic expansion for $d_{\alpha,\beta;N;c_0n^{1/4}}(n)$. Finally, in \Cref{section:coefficients} we evaluate explicitly the coefficients of the asymptotic expansions and prove the theorems. We end the paper with some numerical illustrations of the theorems.

\section*{Acknowledgements}

We would like to thank the anonymous referee for valuable comments and helpful suggestions.

\section{Preliminaries}\label{section:prelim}

\subsection{Generating functions for partitions}
The generating function for $d(n):= \#\mc D(n)$, the number of partitions of $n$ into distinct parts, is given by
\[
\sum_{n=0}^\infty d(n) q^n = \sum_{n\ge 0}^\infty \frac{q^{\frac{n(n+1)}{2}}}{(q;q)_n},
\]
where $(a;q)_n := \prod_{j=0}^{n-1} (1-aq^j)$ is the usual $q$-Pochhammer symbol. It is a classical result \cite{Hua42NP} that we have an asymptotic formula
\begin{equation}\label{eq:Hua}
d(n) = \frac{e^{\pi\sqrt{n/3}}}{4\sqrt[4]{3} n^{3/4}} \rb{1+O(n^{-1/2})}.
\end{equation}
For our purpose, we introduce some other generating functions related to partitions. Let $\bm m = (m_1,\ldots, m_N) \in \N_0^N$. Using standard techniques in combinatorics, the generating function for the number of partitions $d_{\bm m}(n)$ of $n$ into $m_j$ distinct parts congruent to $j\pmod{N}$, $1\le j \le N$, is given by (cf. \cite{DS11LD,BMRS24AP})
\begin{equation}\label{eq:generating_summand}
\sum_{n=0}^\infty d_{\bm m}(n)q^n = \frac{q^{NH(\bm m)}}{\prod_{j=1}^N (q^N;q^N)_{m_j}},
\end{equation}
where
\begin{align}\label{eq:H_def}
H(\bm m) &:= \tfrac 12 \bm m^T\bm m + \bm b^T \bm m, & \bm b := \begin{pmatrix}\frac 1N-\frac 12, & \frac 2N -\frac 12, & \ldots, & \frac 12\end{pmatrix}^T.
\end{align}
The generating function for $d_{\alpha,\beta;N;c}(n)$ is then obtained simply by summing over the generating functions \eqref{eq:generating_summand} with $m_\alpha-m_\beta \ge c$:
\begin{equation}\label{eq:generating_function}
D_{\alpha,\beta;N;c}(q) := \sum_{n\ge 0} d_{\alpha,\beta;N;c}(n) q^n = \sum_{\bm m \in \mc S_{\alpha,\beta;N;c}} \frac{q^{NH(\bm m)}}{\prod_{j=1}^N (q^N;q^N)_{m_j}},
\end{equation}
where $\mc S_{\alpha,\beta;N;c} := \cbm{(m_1,\ldots,m_N)\in\N_0^N}{m_\alpha-m_\beta\ge c}$.

\subsection{Special functions}
We first recall several special functions. For $s\in\C$, the {\it polylogarithm} is given by
\[
\Li_s(w) := \sum\limits_{n\geq1} \frac{w^n}{n^s}, \quad \vb{w} < 1.
\]
For $w\in(0,1)$, define the {\it Rogers dilogarithm function} (shifted by a constant so that $L(1)=0$)
\[
L(w) := \Li_2(w) + \frac12\log(w)\log(1-w) - \frac{\pi^2}{6}.
\]
It is well-known (see \cite[p.6]{Zag07DF}) that
\[
L\rb{\frac12} = -\frac{\pi^2}{12}.
\]
Let $B_r(x)$ denote the \textit{$r$-th Bernoulli polynomial} defined via the generating function
\[
\frac{t e^{xt}}{e^t-1} =: \sum\limits_{n=0}^\infty B_n(x) \frac{t^n}{n!}.
\]
The \textit{$r$-th Bernoulli number} is given by $B_r := B_r(0)$.

\subsection{Asymptotic formulae}

For the proof we make use of the following version of the Euler--Maclaurin summation formula. The classical version of Euler--Maclaurin summation compares the sum $\sum_{n=0}^mF(n)$ to the integral $\int_0^mF(x)dx$; this version makes use of the same expressions but with a change of variables $F(x)=f(xz+a)$. A related ``shifted'' version of the Euler--Maclaurin summation formula can also be found in \cite[Proposition 3]{Zag06MT}.

\begin{prp}\label{prp:EMF}
Let $a,z\in\C$. Let $f$ be a holomorphic function that is defined on the ray $L(a,z):=\{tz+a:t\in\R_{\ge 0}\}$. Then we have, for all $m,R\in\N$,
\begin{multline*}
\sum_{n=0}^m f(nz+a) = \frac1z\int_a^{a+mz} f(x) dx + \frac{f(mz+a)+f(a)}{2}\\
{}+ \sum_{r=1}^R \frac{B_{2r}z^{2r-1}}{(2r)!}\rb{f^{(2r-1)}(mz+a)-f^{(2r-1)}(a)} + O(1)z^{2R}\int_a^{a+mz} \vb{f^{(2R+1)}(x)} dx.
\end{multline*}
Furthermore, if $f$ and all of its derivatives have rapid decay (i.e. decay faster than any power of $z$) on $L(a,z)$, then we have
\begin{align*}
\sum_{n\ge0} f(nz+a) &= \frac1z\int_a^{a+\infty z} f(x) dx + \frac{f(a)}{2}\\
&\hspace{2cm}- \sum_{r=1}^R \frac{B_{2r}z^{2r-1}}{(2r)!}f^{(2r-1)}(a) + O(1)z^{2R}\int_a^{a+z\infty} \vb{f^{(2R+1)}(x)} dx.
\end{align*}
\end{prp}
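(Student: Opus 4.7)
The plan is to reduce the claim to the classical real-variable Euler--Maclaurin summation formula via a change of variable along the ray $L(a,z)$. Define $F(x) := f(xz+a)$; since $f$ is holomorphic on $L(a,z)$, the function $F$ is smooth (indeed real-analytic) as a map from $[0,m]\subset\R$ into $\C$, so the classical Euler--Maclaurin formula applies directly and gives
\[
\sum_{n=0}^m F(n) = \int_0^m F(x)\,dx + \frac{F(m)+F(0)}{2} + \sum_{r=1}^R \frac{B_{2r}}{(2r)!}\rb{F^{(2r-1)}(m) - F^{(2r-1)}(0)} + O(1)\int_0^m \vb{F^{(2R+1)}(x)}\,dx.
\]

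Next, I would translate this back to $f$. By the chain rule $F^{(k)}(x) = z^k f^{(k)}(xz+a)$, which, when inserted into the boundary terms and the Bernoulli sum, produces the factors $z^{2r-1}$ multiplying $f^{(2r-1)}(a)$ and $f^{(2r-1)}(mz+a)$ as in the statement. The integral transforms under $u = xz+a$, $du = z\,dx$, yielding
\[
\int_0^m F(x)\,dx = \frac{1}{z}\int_a^{a+mz} f(u)\,du,
\]
interpreted as a contour integral along the straight segment parametrising $L(a,z)$. The same substitution converts the error integral into $z^{2R}\int_a^{a+mz}\vb{f^{(2R+1)}(u)}\,du$, establishing the first identity.

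For the infinite-sum version I would let $m\to\infty$ along $L(a,z)$. The rapid-decay hypothesis on $f$ and all its derivatives forces $f(mz+a)\to 0$ and $f^{(2r-1)}(mz+a)\to 0$, so those boundary terms drop out. The same hypothesis makes the contour integrals $\int_a^{a+mz}f(u)\,du$ and $\int_a^{a+mz}\vb{f^{(2R+1)}(u)}\,du$ absolutely convergent along the ray, with limits $\int_a^{a+\infty z} f(u)\,du$ and $\int_a^{a+\infty z}\vb{f^{(2R+1)}(u)}\,du$ respectively, yielding the claimed asymptotic expansion.

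There is no serious obstacle: the proof is essentially bookkeeping. The only points deserving care are (i) the interpretation of the oriented contour integrals along $L(a,z)$ for complex $z$, which is unambiguous because $f$ is holomorphic on the ray and the integrand is integrated along a straight line, and (ii) the passage from finite to infinite $m$, which is justified termwise by the rapid-decay assumption and dominated convergence applied to the remainder integral.
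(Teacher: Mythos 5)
Your proposal is correct and follows exactly the route the paper indicates: the paper explicitly remarks that \Cref{prp:EMF} is the classical Euler--Maclaurin formula for $\sum_{n=0}^m F(n)$ specialised to $F(x)=f(xz+a)$, and your change of variables, chain-rule bookkeeping $F^{(k)}(x)=z^kf^{(k)}(xz+a)$, and passage to $m\to\infty$ under rapid decay are precisely that argument. No gaps.
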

\begin{rmk}
We give a note on the usage of the Euler--Maclaurin summation formula in the article. Depending on the expression that we sum over, the size of the error term may decay at different rates as $R\to\infty$. Hence, at different instances we may need to apply \Cref{prp:EMF} with different choices of $R$, in order to achieve a given level of precision in the end results. The canonical usage of the formula is to convert a sum into an integral, but at a few places we also apply the formula backwards, and convert an integral into a sum.
\end{rmk}

Here we recall some results in \cite{BMRS24AP} about the asymptotic formulae for the function
\begin{equation}\label{eq:summand_def}
\frac{e^{-H(\bm m)z}}{\prod_{j=1}^N (e^{-z};e^{-z})_{m_j}}
\end{equation}
as $z\to 0$ in the right half-plane. The readers are referred to \cite{BMRS24AP} for the full proofs of the statements in this subsection. We shall use $\ll$, $\gg$, and $\asymp$ to compare the size of (possibly complex) quantities, without any assumption on their signs or arguments unless stated otherwise. For example, for $y\in\R$, $y\ll 1$ means $-C\le y \le C$ for some $C>0$.

Let $\varepsilon > 0$, and $z=\varepsilon(1+iy) \in \C$. For $\lambda\in\R$, we define
\begin{equation}\label{eq:N_def}
\mc N_{\varepsilon,\lambda} := \cbm{\bm m \in \N_0^N}{\bm m = \frac{\log(2)}{\varepsilon}\bm 1 + \bm\mu, \; \vb{\bm\mu}\le \varepsilon^\lambda},
\end{equation}
where $\vb{\bm\mu}$ denotes the (Euclidean) norm of the vector $\bm\mu\in\R^N$. Throughout, we always assume that $-\frac 23 < \lambda < -\frac 12$. As in \cite{BMRS24AP}, we give two different asymptotic formulae for \eqref{eq:summand_def}, which apply for different ranges of $y$.

\begin{rmk}
When $\varepsilon = \re(z)$ is fixed, the expression \eqref{eq:summand_def} has the greatest size when $\bm m$ is around $\frac{\log(2)}{\varepsilon} \bm 1$. In particular, it is shown in \cite{VZ11NC} that whenever $\lambda < -\frac 12$, the summands \eqref{eq:summand_def} for $\bm m \not\in \mc N_{\varepsilon,\lambda}$ have essentially negligible contribution to the sum \eqref{eq:generating_function} (see \Cref{prp:out_of_range}). This allows us to truncate the sum \eqref{eq:generating_function}, which helps with the analysis. On the other hand, for the asymptotic analysis in \Cref{prp:narrow_asymp,prp:wide_asymp} to work, we need $\lambda > -\frac 23$. This motivates the choice of $\lambda$ above. We note that the possibility to choose such $\lambda$ plays a pivotal role in various literature on asymptotic estimates of Nahm sums in the literature \cite{BMRS24AP,GZ21AN,VZ11NC,Zag07DF}.
\end{rmk}

\begin{prp}[{\cite[Proposition 3.2]{BMRS24AP}}]\label{prp:narrow_asymp}
Let $\bm m\in\mc N_{\varepsilon,\lambda}$ and $\bm u = (u_1,\ldots, u_N)\in\C^N$ be such that, for $1\le j \le N$,
\begin{equation}\label{eq:uj_def}
m_j = \frac{\log (2)}{z} + \frac{u_j}{\sqrt{z}}.
\end{equation}
If $y\ll\varepsilon^{1/3+\delta}$ for some $\delta>0$, then as $z\to 0$ we have for $R\in\N$, uniformly in $\bm u$,
\begin{equation*}
\frac{e^{-H(\bm m)z}}{\prod_{j=1}^N (e^{-z};e^{-z})_{m_j}} = \rb{\frac z\pi}^{N/2} \frac{e^{\frac{\pi^2N}{12z} - \bm u^T\bm u}}{\sqrt{2}} \rb{\sum_{r=0}^{R-1} C_r(\bm u) z^{r/2} + O\rb{\varepsilon^{3R\delta_1}}},
\end{equation*}
where $\delta_1:=\min\{\delta,\frac23+\lambda\}>0$. The polynomials $C_r(\bm u)$ are defined as coefficients of the formal exponential
\begin{equation}\label{eq:C_def}
\sum_{r\ge0} C_r(\bm u)z^{r/2} := \exp(\phi(\bm u,z)),\qquad \phi(\bm u,z) := -\bm b^T\bm u\sqrt{z} - \frac{Nz}{24} + \sum_{j=1}^N \xi\rb{\frac{u_j}{\sqrt{z}},z},
\end{equation}
and the function $\xi(\nu,z)$ has an asymptotic expansion as $z\to 0$ given by
\begin{equation*}
\xi(\nu,z) = -\sum_{r=2}^{R-1} \rb{B_r(-\nu)-\delta_{r,2}\nu^2}\Li_{2-r}\rb{\frac12}\frac{z^{r-1}}{r!} + O\rb{z^{R-1}}.
\end{equation*}
In particular, we have $\deg C_r(\bm u) \le 3r$.
\end{prp}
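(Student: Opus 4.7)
The plan is to take logarithms and attack each $q$-Pochhammer factor via the infinite form of \Cref{prp:EMF}. Writing
\[
L := \log\rb{\frac{e^{-H(\bm m)z}}{\prod_{j=1}^N(e^{-z};e^{-z})_{m_j}}} = -H(\bm m)z - \sum_{j=1}^N \log(e^{-z};e^{-z})_{m_j},
\]
I would split $(e^{-z};e^{-z})_{m_j} = (e^{-z};e^{-z})_\infty/T_j$ with $T_j := (e^{-(m_j+1)z};e^{-z})_\infty$. The modular transformation of the Dedekind eta function supplies
\[
\log(e^{-z};e^{-z})_\infty = -\frac{\pi^2}{6z} - \tfrac12\log\tfrac{z}{2\pi} + \tfrac{z}{24} + O(e^{-c/z}),
\]
and the infinite form of \Cref{prp:EMF} applied to $f(s)=\log(1-e^{-s})$ at base $a=(m_j+1)z$, using $\int_a^{a+\infty z}f(s)\,ds = -\Li_2(e^{-a})$, gives
\[
\log T_j = -\tfrac{1}{z}\Li_2(e^{-a}) + \tfrac12 f(a) - \sum_{r\ge 1}\tfrac{B_{2r}z^{2r-1}}{(2r)!}f^{(2r-1)}(a) + \text{err}.
\]

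Since $a = \log 2 + u_j\sqrt z + z$ stays bounded away from $0$ on $\mc N_{\varepsilon,\lambda}$, I would Taylor-expand $\Li_2(e^{-a})$, $f(a)$, and each $f^{(2r-1)}(a)$ around $\log 2$. The key inputs are $\Li_2(\tfrac12) = \tfrac{\pi^2}{12}-\tfrac{(\log 2)^2}{2}$ and $f^{(k)}(\log 2) = (-1)^{k+1}\Li_{1-k}(\tfrac12)$, the latter from termwise differentiation of $\log(1-e^{-x}) = -\sum_{n\ge 1}e^{-nx}/n$. The $1/z$ pole of $L$ then receives three contributions: $+N\pi^2/(6z)$ from $-N\log(e^{-z};e^{-z})_\infty$; $-N(\pi^2/12 - (\log 2)^2/2)/z$ from the leading $\Li_2(\tfrac12)/z$ piece of $\sum_j\log T_j$; and $-N(\log 2)^2/(2z)$ from $-\bm m^T\bm m z/2$. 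The $(\log 2)^2$ pieces cancel, leaving the main exponent $N\pi^2/(12z)$. At the $z^0$ level, the pure-$u_j^2$ contributions coming from $(a-\log 2)^2/(2z)$ in $-\Li_2(e^{-a})/z$ and from $-\bm m^T\bm m z/2$ each equal $-u_j^2/2$, adding to $-\bm u^T\bm u$. The prefactor $(z/\pi)^{N/2}/\sqrt 2$ assembles from the $+\tfrac N2\log(z/(2\pi))$ piece of $-N\log(e^{-z};e^{-z})_\infty$ together with the $z^0$ pieces of $\tfrac12\sum_j f(a)$, $-\sum_j\Li_2(e^{-a})/z$, and $-\bm b^T\bm m z$; using $\bm b^T\bm 1 = 1/2$, they combine to $\tfrac N2\log z - \tfrac N2\log\pi - \tfrac12\log 2$.

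The remaining subleading pieces---the higher Taylor coefficients of $\Li_2(e^{-a})/z$ and $f(a)/2$, the Bernoulli series $\sum_{r\ge 1}B_{2r}z^{2r-1}/(2r)!\cdot f^{(2r-1)}(a)$, and the residual linear-plus-constant term $-\bm b^T\bm u\sqrt z - Nz/24$---reorganize into $\exp(\phi(\bm u,z))$. To match the $z^{r-1}$ coefficient of $\sum_j\xi(u_j/\sqrt z,z)$ with $-(B_r(-u_j/\sqrt z)-\delta_{r,2}(u_j/\sqrt z)^2)\Li_{2-r}(\tfrac12)/r!$, I would expand $(u_j\sqrt z+z)^k = \sum_i\binom{k}{i}u_j^{k-i}z^{(k+i)/2}$ binomially and repackage the mixed Taylor-plus-Bernoulli sum via the generating function $te^{xt}/(e^t-1) = \sum_n B_n(x)t^n/n!$ at $x = -u_j/\sqrt z$. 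The subtraction $\delta_{r,2}\nu^2$ at $r = 2$ precisely removes the $\nu^2$ piece already absorbed into the Gaussian $e^{-\bm u^T\bm u}$.

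The main obstacle is the uniform error estimate. The identity $u_j\sqrt z = iy\log 2 + \mu_j z$ yields $|u_j\sqrt z + z| \ll \varepsilon^{1/3+\delta_1}$ uniformly on $\mc N_{\varepsilon,\lambda}$ subject to $|y|\ll\varepsilon^{1/3+\delta}$, so each Taylor tail $|(u_j\sqrt z+z)^{3R}/(3R)!|$ is $O(\varepsilon^{R(1+3\delta_1)})$, comfortably smaller than the required $O(\varepsilon^{3R\delta_1})$. The Euler--Maclaurin remainder $z^{2R}\int_a^{a+\infty z}|f^{(2R+1)}(s)|\,|ds|$ is $O(\varepsilon^{2R})$ via the derivative estimate $|f^{(k)}(s)|\ll_k e^{-\re s}$ along the ray. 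The cone condition $|y|\ll\varepsilon^{1/3+\delta}$ is exactly sharp enough to make all implicit constants uniform in $\bm u$.
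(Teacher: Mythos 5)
This proposition is cited in the present paper from \cite{BMRS24AP} without proof, so there is no in-text argument to compare against; but your sketch follows the standard (and, to my knowledge, the actual) route to such Nahm-sum asymptotics: write $(e^{-z};e^{-z})_{m_j} = (e^{-z};e^{-z})_\infty\big/(e^{-(m_j+1)z};e^{-z})_\infty$, use the $\eta$-modular transform on the first factor, and hit $\log(e^{-(m_j+1)z};e^{-z})_\infty=\sum_{k\ge0}\log(1-e^{-(m_j+1+k)z})$ with the infinite form of \Cref{prp:EMF}, then Taylor-expand everything around the saddle $a=\log 2$. I verified the delicate bookkeeping: the three $1/z$ contributions ($N\pi^2/6$ from $\eta$, $-N\Li_2(1/2)$ from the leading EM integral, $-N(\log 2)^2/2$ from $-\tfrac z2\bm m^T\bm m$) do collapse to $N\pi^2/12$; the two $u_j^2$ pieces (each $-\tfrac12 u_j^2$, one from the quadratic Taylor coefficient of $-\Li_2(e^{-a})/z$ with $\Li_2''$ contributing $f'(\log2)=1$, one from $-\tfrac z2\bm m^T\bm m$) give $-\bm u^T\bm u$; the $z^0$ constants ($-\tfrac12\log 2$ from $-z\bm b^T\bm m$, $-\tfrac N2\log 2$ from $\tfrac12\sum f(a_j)$, $+N\log2$ from the linear Taylor term of $-\Li_2(e^{-a_j})/z$, and $-\tfrac N2\log(2\pi)$ from $\eta$) give exactly $-\tfrac12\log2-\tfrac N2\log\pi$, yielding the prefactor $(z/\pi)^{N/2}/\sqrt2$; and the identity $u_j\sqrt z = iy\log 2+\mu_j z$ together with $\delta_1\le\min\{\delta,\tfrac23+\lambda\}$ does give $|u_j\sqrt z+z|\ll\varepsilon^{1/3+\delta_1}$ uniformly. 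So the architecture is sound.

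Two places are left more gestured-at than proved and would need to be tightened. First, the repackaging of the residual Taylor/Bernoulli sums into $\sum_j\xi(u_j/\sqrt z,z)$: you invoke $te^{xt}/(e^t-1)=\sum_n B_n(x)t^n/n!$ but do not actually show that, for each fixed $r\ge 2$, the coefficient of $\Li_{2-r}(\tfrac12)z^{r-1}/r!$ that emerges from combining the $(r{+}1)$-st Taylor coefficient of $-\Li_2(e^{-a})/z$, the $r$-th Taylor coefficient of $\tfrac12 f(a)$, and the Bernoulli-weighted terms $\tfrac{B_{2s}z^{2s-1}}{(2s)!}f^{(2s-1)}(a)$ is precisely $-(B_r(-\nu)-\delta_{r,2}\nu^2)$. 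This is a finite but nontrivial identity in Bernoulli polynomials (essentially the generating-function identity evaluated at $t=z$, $x=-\nu$, after shifting by $z$) and is the heart of why $\xi$ takes the stated form. Second, your error accounting lives entirely in the logarithm. To land on the stated $O(\varepsilon^{3R\delta_1})$ inside the parenthesis, one must exponentiate and observe that $|C_r(\bm u)z^{r/2}|\ll\varepsilon^{3r\delta_1}$ is only \emph{barely} decreasing in $r$ (since $|u_j|\ll\varepsilon^{-1/6+\delta_1}$ and $\deg C_r\le 3r$), so the truncation error of the exponential series is genuinely of size $\varepsilon^{3R\delta_1}$ and not smaller; your bound $O(\varepsilon^{R(1+3\delta_1)})$ is the size of a single Taylor tail in $L$, not of the post-exponentiation truncation, and the $1/z$ in $-\Li_2(e^{-a})/z$ eats a factor $\varepsilon$ you didn't account for. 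Neither gap looks fatal, but both must be carried out to turn the sketch into a proof.
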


Let $\mc A:=(1+\frac{2^{-(1+iy)}}{1-2^{-(1+iy)}})I_N$, with $I_N$ the $N\times N$ identity matrix, and define
\begin{equation}\label{eq:Lambda_def}
	\Lambda(y) := N\rb{\frac{\pi^2}{6}-\frac{\log(2)^2(1+iy)^2}{2}-\Li_2\rb{2^{-(1+iy)}}}.
\end{equation}

\begin{prp}[{\cite[Proposition 3.4]{BMRS24AP}}]\label{prp:wide_asymp}
Let $\bm m\in\mc N_{\varepsilon,\lambda}$ and $\bm v = (v_1,\ldots,v_N) \in\C^N$ be such that, for $1\le j \le N$, 
\begin{equation*}
m_j = \frac{\log (2)}{\varepsilon} + \frac{v_j}{\sqrt{z}}.
\end{equation*}
Suppose $y \ll\varepsilon^{-1-3\lambda/2+\delta}$ for some $\delta>0$. Then as $z\to 0$ we have for $R\in\N$, uniformly in $\bm v$,
\begin{multline*}
\frac{e^{-H(\bm m)z}}{\prod_{j=1}^N (e^{-z};e^{-z})_{m_j}}  = 2^{-(1+iy)/2} \rb{1-2^{-(1+iy)}}^{-N/2}\rb{\frac{z}{2\pi}}^{N/2} e^{\Lambda(y)/z}\\
\times e^{-\frac 12\bm v^T\mc A\bm v+\frac{1}{\sqrt z}\rb{-\log(2)(1+iy)-\log(1-2^{-(1+iy))}}\sum_{j=1}^N v_j}\rb{\sum_{r=0}^{R-1} D_r(\bm v,y)z^{r/2} + O\rb{\varepsilon^{2R\delta_2}}},
\end{multline*}
where $\delta_2:=\min\{1+\frac{3\lambda}{2},\delta\}>0$. The functions $D_r(\bm v,y)$ are defined as coefficients of the formal exponential
\begin{equation*}
\sum_{r\geq0} D_r(\bm v,y)z^{r/2} := \exp(\phi(\bm v,z)),\qquad \phi(\bm v,z) := -\bm b^T\bm v\sqrt{z} - \frac{Nz}{24} + \sum_{j=1}^N \xi_y\rb{\frac{v_j}{\sqrt{z}},z},
\end{equation*}
and $\xi_y(\frac{v_j}{\sqrt{z}},z)$ has an asymptotic expansion as $z\to 0$ given by
\begin{equation*}
\xi_y\rb{\nu,z} = - \sum_{r=2}^{R-1} \rb{B_r\rb{-\nu} - \delta_{r,2} \nu^2} \Li_{2-r}\rb{2^{-(1+iy)}} \frac{z^{r-1}}{r!} + O\rb{z^{R-1}}.
\end{equation*}
\end{prp}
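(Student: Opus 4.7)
The plan is to take logarithms, apply the Euler--Maclaurin summation formula \Cref{prp:EMF} to each finite product $(e^{-z};e^{-z})_{m_j}$, and expand every boundary quantity as a power series in $\sqrt z$ using the parametrisation $m_j z = \log(2)(1+iy) + v_j\sqrt z$. The conceptual point distinguishing this from \Cref{prp:narrow_asymp} is that in the wider regime $y \ll \varepsilon^{-1-3\lambda/2+\delta}$, the product $m_j z$ no longer clusters near $\log(2)$ but rather near $\log(2)(1+iy)$, so that the boundary value $e^{-m_j z}$ sits near $2^{-(1+iy)}$ instead of $\tfrac12$. Consequently, all Taylor expansions of $f(x) = \log(1-e^{-x})$ and its derivatives at the upper endpoint naturally produce polylogarithms $\Li_s(2^{-(1+iy)})$ in place of $\Li_s(\tfrac12)$, which is the only structural change needed.

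Concretely, I would apply \Cref{prp:EMF} to $f(x) = \log(1-e^{-x})$ with $a = z$, producing the integral $\frac1z\int_z^{m_j z}\log(1-e^{-x})\,dx$, the symmetric boundary contribution $\frac12(\log(1-e^{-m_jz}) + \log(1-e^{-z}))$, the higher derivative corrections $\sum_r \frac{B_{2r}z^{2r-1}}{(2r)!}\big(f^{(2r-1)}(m_j z) - f^{(2r-1)}(z)\big)$, and an EMF remainder. The integral evaluates, via $w = e^{-x}$, to $\frac1z(\Li_2(e^{-z}) - \Li_2(e^{-m_j z}))$; its leading piece $\frac{N}{z}(\tfrac{\pi^2}{6}-\Li_2(2^{-(1+iy)}))$, combined with the quadratic-in-$\bm m$ part of $-H(\bm m)z$, which contributes $-\frac{N\log(2)^2(1+iy)^2}{2z}$, yields exactly $\Lambda(y)/z$. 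The constant part of $\frac12\log(1-e^{-m_j z})$ gives the prefactor $(1-2^{-(1+iy)})^{-N/2}$ after exponentiation, and the near-origin boundary pieces $\frac12\log(1-e^{-z})$ together with the $f^{(2r-1)}(z)$ corrections reproduce, as in the narrow case, the eta-type factor $(z/(2\pi))^{N/2}$; the remaining $2^{-(1+iy)/2}$ is then supplied by the linear $\bm b^T \bm m z$ piece of $H$.

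For the $\bm v$-dependent factors, the $1/\sqrt z$ linear contributions combine the cross term in $-\tfrac12 \bm m^T\bm m z$ with the first-order Taylor expansion of $\log(1-e^{-m_j z})$ around $\log(2)(1+iy)$ to produce the exponent $\tfrac{1}{\sqrt z}(-\log(2)(1+iy) - \log(1-2^{-(1+iy)}))\sum_j v_j$. At order $z^0$, the quadratic piece $-\tfrac12\sum_j v_j^2$ from $-H(\bm m)z$ combines with the second-order Taylor contribution $-\tfrac12 v_j^2 \cdot \tfrac{2^{-(1+iy)}}{1 - 2^{-(1+iy)}}$ (the value of $f''$ at $\log(2)(1+iy)$) to reproduce $-\tfrac12 \bm v^T\mathcal{A}\bm v$ with $\mathcal{A}$ as in the statement. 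All remaining higher-order terms in $\sqrt z$ are absorbed into the formal exponential $\exp(\phi(\bm v, z))$; the series $\xi_y$ collects the contributions of the higher derivatives $f^{(r)}(x) = -\Li_{1-r}(e^{-x})$ evaluated at $\log(2)(1+iy)$, which gives exactly the stated polylogarithm expansion, and the $D_r(\bm v,y)$ are then defined by formal exponentiation.

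The main obstacle I anticipate is controlling the error uniformly across the wider range $y \ll \varepsilon^{-1-3\lambda/2+\delta}$. Each derivative $f^{(r)}$ evaluated near $\log(2)(1+iy)$ has size comparable to $|1-2^{-(1+iy)}|^{-r}$, which deteriorates as $|y|$ grows, and the Taylor remainders in $v_j\sqrt z$ must be controlled via $|v_j|\le \varepsilon^{\lambda+1/2}$ inherited from $|\bm\mu|\le \varepsilon^\lambda$. The EMF remainder $z^{2R}\int_z^{m_j z}|f^{(2R+1)}(x)|\,dx$ must be balanced against these growths, and the sharpest exponent that survives this balancing is precisely $\delta_2 = \min\{1+\tfrac{3\lambda}{2},\delta\}$, which is exactly what fixes the allowable range for $y$. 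Carefully tracking this balance, and verifying the holomorphicity of $f$ on the ray $L(z, m_j z)$ (which holds since $\re(1+iy) > 0$), is the core technical work of the argument.
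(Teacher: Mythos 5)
This statement is imported verbatim from \cite[Proposition~3.4]{BMRS24AP}; the paper you are reading does not reprove it, so there is no internal proof to compare against. That said, your proposed line of argument is the natural one and, as far as I can judge, is the same as the one used in \cite{BMRS24AP}: apply the Euler--Maclaurin formula (\Cref{prp:EMF}) to $f(x)=\log(1-e^{-x})$, use the parametrisation $m_jz=\log(2)(1+iy)+v_j\sqrt z$ so that $e^{-m_jz}$ lies near $2^{-(1+iy)}$, and Taylor-expand every endpoint quantity around $\log(2)(1+iy)$; that mechanically replaces the $\Li_s(\tfrac12)$ of \Cref{prp:narrow_asymp} by $\Li_s(2^{-(1+iy)})$ throughout. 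Your identification of the sources of $\Lambda(y)/z$, the $(1-2^{-(1+iy)})^{-N/2}$ and $2^{-(1+iy)/2}$ prefactors, the linear exponent, the Gaussian $-\tfrac12\bm v^T\mc A\bm v$, and the structure of $\xi_y$ and the formal exponential defining $D_r$ are all correct.

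Two small bookkeeping slips should be repaired. First, the term proportional to $\tfrac{1}{\sqrt z}\sum_j v_j$ does not come from Taylor-expanding the boundary term $\tfrac12\log(1-e^{-m_jz})$ (which contributes only at orders $z^0,\,z^{1/2},\dots$); it comes from the integral piece, i.e.\ from $-\tfrac1z\Li_2(e^{-m_jz})$ expanded to first order in $v_j\sqrt z$, giving $-\tfrac{v_j}{\sqrt z}\log(1-2^{-(1+iy)})$, which then combines with the cross term of $-\tfrac12\bm m^T\bm m\,z$ as you describe. Second, the coefficient $\tfrac{2^{-(1+iy)}}{1-2^{-(1+iy)}}$ in the quadratic exponent is $f'(\log(2)(1+iy))$, i.e.\ the second derivative of the antiderivative $\Li_2(e^{-x})$, not $f''$; with $f(x)=\log(1-e^{-x})$ one has $f''(x)=-e^{-x}/(1-e^{-x})^2$, which is not the quantity you wrote. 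Neither slip changes the final formulae, but a referee would flag them.

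The genuine gap is that the quantitative content is only announced, not executed. You correctly flag that everything rests on uniformity: each $f^{(r)}$ grows like $\vb{1-2^{-(1+iy)}}^{-r}$ as $\vb{y}$ grows, the Taylor remainders in $v_j\sqrt z$ must be tracked using $\vb{v_j/\sqrt z}\le\varepsilon^\lambda$, and one must show the EMF remainder and the truncated Taylor tails are $O(\varepsilon^{2R\delta_2})$ with $\delta_2=\min\{1+\tfrac{3\lambda}{2},\delta\}$ on the full range $\vb y\ll\varepsilon^{-1-3\lambda/2+\delta}$. But this balancing is precisely what determines the admissible range of $y$ and the exponent $\delta_2$, and you leave it as a black box. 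To turn the sketch into a proof you would need, at minimum, explicit bounds on $\vb{\Li_{2-r}(2^{-(1+iy)})}$ and on $\vb{1-2^{-(1+iy)}}^{-1}$ over the stated range of $y$, a bound on the EMF remainder integral in terms of these, and a verification that $z\ll\varepsilon^{-3\lambda/2+\delta_2}$ (noted in the remark following the proposition) so that the powers of $z^{r/2}$ still constitute a genuine asymptotic scale. Without that, what you have is a correct roadmap rather than a complete argument.
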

\begin{rmk}
Under the assumptions in \Cref{prp:wide_asymp}, we have $z\ll \varepsilon^{-3\lambda/2+\delta_2}$, and $v_j/\sqrt{z} \le \varepsilon^\lambda$. These facts will be used below in the proofs.
\end{rmk}

We also need the error estimates from \cite{BMRS24AP}.

\begin{prp}[{\cite[Proposition 4.1]{BMRS24AP}}]\label{prp:out_of_range}
For all $L\in\N$, as $\re(z)\to 0$ in the right half-plane, we have
\[
\sum_{\bm m \in \N_0^N \bs \mc N_{\varepsilon,\lambda}} \vb{\frac{e^{-H(\bm m)z}}{\prod_{j=1}^N (e^{-z};e^{-z})_{m_j}}} \ll \varepsilon^L e^{\frac{\pi^2N}{12\varepsilon}}.
\]
\end{prp}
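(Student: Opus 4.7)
The plan uses two elementary reductions followed by a one-dimensional saddle-point tail estimate. First, since $H(\bm m) = \tfrac12 \sum_j m_j^2 + \sum_j b_j m_j$ is separable and $|1-e^{-jz}|^2 = 1 - 2e^{-j\varepsilon}\cos(j\im z) + e^{-2j\varepsilon} \ge (1-e^{-j\varepsilon})^2$ whenever $\re z = \varepsilon > 0$, the absolute value of the summand factorises and is dominated by its real-$z$ analogue:
\[
\left|\frac{e^{-H(\bm m)z}}{\prod_{j=1}^N (e^{-z};e^{-z})_{m_j}}\right| \le \prod_{j=1}^N f_j(m_j), \qquad f_j(m) := \frac{e^{-(m^2/2+b_jm)\varepsilon}}{(e^{-\varepsilon};e^{-\varepsilon})_m}.
\]
Second, if $\bm m \notin \mc N_{\varepsilon,\lambda}$ then $|\bm m - \tfrac{\log(2)}{\varepsilon}\bm 1| > \varepsilon^\lambda$ forces some coordinate $j_0$ to satisfy $|m_{j_0} - \log(2)/\varepsilon| > \varepsilon^\lambda/\sqrt{N}$. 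A union bound over $j_0$ thus reduces the statement to the one-dimensional tail bound
\[
\Sigma := \sum_{|m - \log(2)/\varepsilon| > \varepsilon^\lambda/\sqrt{N}} f_j(m) \ll \varepsilon^L e^{\pi^2/12\varepsilon},
\]
since each of the remaining $N-1$ coordinate sums is bounded by $\sum_{m\ge 0} f_j(m) = (-e^{-(1/2+b_j)\varepsilon};e^{-\varepsilon})_\infty \ll \varepsilon^{-C} e^{\pi^2/12\varepsilon}$, which follows from Euler's $q$-series identity $\sum_m q^{m(m+1)/2}w^m/(q;q)_m = (-wq;q)_\infty$ together with the modular transformation of $\eta$.

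For the tail I would analyse $\log f_j(m)$ using \Cref{prp:EMF}. Writing $-\log(e^{-\varepsilon};e^{-\varepsilon})_m = -\sum_{k=1}^m \log(1-e^{-k\varepsilon})$ and applying Euler--Maclaurin yields
\[
\log f_j(m) = \varepsilon^{-1} G(m\varepsilon) + O(\log(1/\varepsilon)), \qquad G(t) := -\int_0^t \log(1-e^{-x})\,dx - \tfrac{t^2}{2}.
\]
Direct computation gives $G'(t) = -\log(1-e^{-t}) - t$, vanishing uniquely at $t_0 = \log(2)$ (where $e^{-t_0} = 1 - e^{-t_0} = \tfrac12$), and $G''(t) = -1/(1-e^{-t}) < 0$ on $(0,\infty)$; hence $G$ is strictly concave, with $G(t_0) = \pi^2/12$ after a short evaluation using $\Li_2(\tfrac12) = \pi^2/12 - (\log(2))^2/2$ (equivalent to the stated $L(\tfrac12) = -\pi^2/12$). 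Taylor expansion near $t_0$ gives $G(t) \le \pi^2/12 - c(t-t_0)^2$ on a fixed neighbourhood, and the asymptotic $G(t) \sim -t^2/2$ as $t \to \infty$ combined with strict concavity on all of $(0,\infty)$ extends this to a global upper bound. Translating back to $m$ produces the uniform Gaussian-type majorant $f_j(m) \ll \varepsilon^{-C} e^{\pi^2/12\varepsilon} e^{-c'\varepsilon(m-m_0)^2}$ with $m_0 := \log(2)/\varepsilon$.

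Summing this majorant over $|m-m_0| > \varepsilon^\lambda/\sqrt{N}$ and comparing with a Gaussian integral gives $\Sigma \ll \varepsilon^{-C'} e^{\pi^2/12\varepsilon} e^{-c''\varepsilon^{2\lambda+1}}$. Since $\lambda < -\tfrac12$ makes $2\lambda + 1 < 0$, the quantity $\varepsilon^{2\lambda+1}$ tends to $+\infty$ as $\varepsilon \to 0$, so $e^{-c''\varepsilon^{2\lambda+1}}$ decays faster than any power of $\varepsilon$; this establishes $\Sigma \ll \varepsilon^L e^{\pi^2/12\varepsilon}$ for every $L$. Combining with the $N-1$ bulk factors and summing over the $N$ choices of distinguished coordinate then delivers the claim $\ll \varepsilon^L e^{N\pi^2/12\varepsilon}$. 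The step I expect to be most delicate is establishing the uniform Gaussian majorant for $f_j$: the local quadratic bound near $m_0$ is immediate from Taylor's theorem, but securing matching decay for $m$ far from $m_0$—in particular in the range $m \gtrsim 1/\varepsilon$, where the saddle-point heuristic no longer applies and $(e^{-\varepsilon};e^{-\varepsilon})_m$ approaches $(e^{-\varepsilon};e^{-\varepsilon})_\infty$—requires exploiting the global concavity of $G$ together with the quadratic growth of $-H$.
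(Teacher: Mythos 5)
Since the paper only cites this proposition from \cite{BMRS24AP} without reproducing the proof, I am comparing your attempt against the approach one would expect from that reference (and the antecedent \cite{VZ11NC}). Your overall strategy --- replacing $|q|$-factors by their real-axis minorants to reduce to $z = \varepsilon$, exploiting the separability of $H$, a union bound over the offending coordinate, and then a one-dimensional saddle-point tail estimate built on the concavity of $G(t) = \pi^2/6 - \Li_2(e^{-t}) - t^2/2$ with unique maximum $\pi^2/12$ at $t=\log 2$ --- is indeed the standard route and aligns with the cited source. The bookkeeping at the end (one tail factor $\ll \varepsilon^{L'} e^{\pi^2/12\varepsilon}$ times $N-1$ bulk factors each $\ll \varepsilon^{-C} e^{\pi^2/12\varepsilon}$, then absorbing $\varepsilon^{-(N-1)C}$ by starting from $L' = L + (N-1)C$) also works.

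One local inaccuracy you should repair: the intermediate claim
\[
\log f_j(m) = \varepsilon^{-1} G(m\varepsilon) + O\rb{\log(1/\varepsilon)}
\]
is not literally true, because the linear piece $-b_j m\varepsilon$ is absent from $G$ and is not $O(\log(1/\varepsilon))$ uniformly in $m$ --- for $m\varepsilon \to \infty$ it grows linearly. The correct statement is that $\log f_j(m) = g(m) + O(\log(1/\varepsilon))$ with $g(m) := \varepsilon^{-1}G(m\varepsilon) - b_j m\varepsilon$, and one then observes that $g$ is still uniformly concave with $g''(m) = -\varepsilon/(1-e^{-m\varepsilon}) < -\varepsilon$, its maximizer $m^*$ satisfies $|m^* - \log(2)/\varepsilon| = O(1)$, and its maximal value is $\pi^2/(12\varepsilon) + O(1)$. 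Concavity then yields $g(m) \le g(m^*) - \tfrac{\varepsilon}{2}(m - m^*)^2$, and since the shift $|m^* - m_0| = O(1)$ is negligible against the excursion $|m - m_0| > \varepsilon^\lambda/\sqrt{N} \to \infty$, the Gaussian majorant in terms of $(m - m_0)^2$ follows. This gives precisely the global majorant you flag as the delicate step, and does so without needing the asymptotic $G(t) \sim -t^2/2$ at infinity; the uniform lower bound on $-g''$ does the work everywhere. With this correction, the proof is sound.
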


\begin{prp}[{\cite[Proposition 4.2]{BMRS24AP}}]\label{prp:minor_arc}
Let $\bm m \in \mc N_{\varepsilon,\lambda}$, and suppose that $\varepsilon^{-\delta} < \vb{y} \le \frac{\pi}{\varepsilon}$ for some $\delta > 0$. Then we have, for all $L\in\N$,
\[
\frac{e^{-H(\bm m)z}}{\prod_{j=1}^N (e^{-z};e^{-z})_{m_j}} \ll \varepsilon^L e^{\frac{\pi^2N}{12\varepsilon}}. 
\]
\end{prp}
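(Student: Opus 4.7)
The strategy is to compare the minor-arc summand at $z = \varepsilon(1+iy)$ with the major-arc summand at $z = \varepsilon$, exploiting the fact that $|e^{-H(\bm m)z}| = e^{-H(\bm m)\varepsilon}$ is independent of $y$, so the entire $y$-dependence sits inside the Pochhammer denominators. The plan is to extract an exponential-in-$1/\varepsilon$ gain from the ratio of denominators, which then dwarfs the polynomial loss $\varepsilon^L$ allowed in the statement.

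The central identity is
\begin{equation*}
\left|\frac{(e^{-z};e^{-z})_{m_j}}{(e^{-\varepsilon};e^{-\varepsilon})_{m_j}}\right|^2 = \prod_{k=1}^{m_j}\left(1 + \frac{4e^{-k\varepsilon}\sin^2(k\varepsilon y/2)}{(1-e^{-k\varepsilon})^2}\right),
\end{equation*}
which comes from the expansion $|1-e^{-kz}|^2 = (1-e^{-k\varepsilon})^2 + 4e^{-k\varepsilon}\sin^2(k\varepsilon y/2)$. Taking logarithms, the task reduces to establishing a uniform lower bound
\begin{equation*}
\sum_{k=1}^{m_j}\log\left(1 + \frac{4e^{-k\varepsilon}\sin^2(k\varepsilon y/2)}{(1-e^{-k\varepsilon})^2}\right) \geq \frac{C}{\varepsilon}
\end{equation*}
for an absolute constant $C > 0$ and all $\varepsilon^{-\delta} < |y| \le \pi/\varepsilon$. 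Since $m_j\varepsilon \to \log 2$ by the defining condition of $\mc N_{\varepsilon,\lambda}$, one restricts attention to those $k$ with $k\varepsilon \in (\tfrac12\log 2, \log 2)$, on which $4e^{-k\varepsilon}/(1-e^{-k\varepsilon})^2$ is bounded below by a positive constant. An equidistribution-type argument on $k\varepsilon y \bmod 2\pi$ then shows that $\sin^2(k\varepsilon y/2) \geq \tfrac14$ for a positive proportion of such $k$, yielding the desired $\Theta(1/\varepsilon)$ lower bound.

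Combining $N$ such factors with the $y$-independent numerator gives
\begin{equation*}
\left|\frac{e^{-H(\bm m)z}}{\prod_{j=1}^N (e^{-z};e^{-z})_{m_j}}\right| \leq e^{-NC/(2\varepsilon)}\left|\frac{e^{-H(\bm m)\varepsilon}}{\prod_{j=1}^N (e^{-\varepsilon};e^{-\varepsilon})_{m_j}}\right|.
\end{equation*}
The $y = 0$ summand on the right falls under \Cref{prp:narrow_asymp} (whose condition $y \ll \varepsilon^{1/3+\delta}$ is trivially satisfied at $y=0$), producing a bound of the form $C'\varepsilon^{N/2}e^{\pi^2 N/(12\varepsilon)}$, once one observes that the Gaussian factor $e^{-\bm u^T\bm u}$ is at most $1$ because $\bm u$ is real at $y=0$. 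Since $e^{-NC/(2\varepsilon)} = O(\varepsilon^L)$ for every $L\in\N$, the claimed bound follows.

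The main obstacle is establishing the uniform lower bound on the sum for the full range $\varepsilon^{-\delta} < |y| \le \pi/\varepsilon$. The endpoint $|y| = \pi/\varepsilon$ is the delicate case, since then $k\varepsilon y$ lands exactly at multiples of $\pi$ and $\sin^2$ vanishes on even $k$; the argument must use that odd $k$ in the target range still form a positive proportion and contribute fully, and a similar congruence-type analysis handles other resonant values of $y$ close to rational multiples of $\pi/\varepsilon$. Beyond this equidistribution input, all the estimates are elementary.
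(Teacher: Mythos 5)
The statement you are proving is quoted verbatim from \cite[Proposition~4.2]{BMRS24AP}, and the present paper gives no proof of its own, so there is nothing internal to compare against; I will assess your argument on its merits.

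Your strategy is sound and, I believe, in the same spirit as the reference: since $\re(z)=\varepsilon$ is fixed along the contour, $|e^{-H(\bm m)z}|=e^{-H(\bm m)\varepsilon}$, and all the $y$-dependence sits in the denominators, so it is natural to compare against the $y=0$ summand and then invoke \Cref{prp:narrow_asymp} at the real point $z=\varepsilon$ (where $\bm u$ is real, $C_0\equiv 1$, and $e^{-\bm u^T\bm u}\le 1$). The product identity
\[
\Bvb{\frac{(e^{-z};e^{-z})_{m_j}}{(e^{-\varepsilon};e^{-\varepsilon})_{m_j}}}^2 = \prod_{k=1}^{m_j}\rb{1 + \frac{4e^{-k\varepsilon}\sin^2(k\varepsilon y/2)}{(1-e^{-k\varepsilon})^2}}
\]
is correct, and restricting to $k\varepsilon\in(\tfrac12\log 2,\log 2)$, where the prefactor $4e^{-k\varepsilon}/(1-e^{-k\varepsilon})^2$ is bounded below by an absolute constant $c_1>0$, is the right move.

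The one genuine soft spot is the ``equidistribution-type argument'' you invoke to show $\sin^2(k\varepsilon y/2)\ge \tfrac14$ on a positive proportion of $k$. As written, this is a sketch, and your discussion of the endpoint $|y|=\pi/\varepsilon$ and ``resonant $y$'' suggests you expect a delicate case analysis. None is needed, and the endpoint you single out is actually the easy one. Writing $\theta := \varepsilon y/2 \in [\tfrac12\varepsilon^{1-\delta}, \tfrac{\pi}{2}]$ (up to sign) and letting $K\asymp 1/\varepsilon$ denote the number of $k$ with $k\varepsilon\in(\tfrac12\log 2,\log 2)$, the elementary identity and geometric-series bound
\[
\sum_{k} \sin^2(k\theta) = \frac{K}{2} - \frac12\,\re\sum_{k} e^{2ik\theta} \ge \frac{K}{2} - \frac{1}{2|\sin\theta|}
\]
settle the matter uniformly: since $|\sin\theta|\gg\theta\gg\varepsilon^{1-\delta}$, the error term is $O(\varepsilon^{\delta-1})=o(K)$, so $\sum_k\sin^2(k\theta)\ge K/4$ once $\varepsilon$ is small. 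Combining this with the concavity bound $\log(1+c_1 t)\ge \log(1+c_1)\,t$ for $t\in[0,1]$ gives $\sum_k \log\bigl(1+c_1\sin^2(k\theta)\bigr)\gg K\gg 1/\varepsilon$, which is exactly your desired $C/\varepsilon$ gain. With that step made explicit, your proof is complete; the hard endpoint in this estimate is in fact the lower one $|y|=\varepsilon^{-\delta}$ (small $\theta$), where the hypothesis $|y|>\varepsilon^{-\delta}$ is precisely what keeps $1/|\sin\theta|$ small relative to $K$.
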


\section{Asymptotic estimates}\label{section:asymp}

In this section, we establish some asymptotic behaviours for the function $D_{\alpha,\beta;N;c}(q)$. Throughout the section, the parameters $\alpha,\beta,N,c$ are fixed, and are suppressed from the notations when no confusion arises.

First we split $D(q) = D_{\alpha,\beta;N;c}(q)$ as a sum
\begin{equation}\label{eq:D_def}
D(q) = \sum_{\bm\ell \in (\Z/N\Z)^N} D_{\bm\ell}(q), \quad \text{ where } \quad D_{\bm\ell}(q) = D_{\alpha,\beta;N;c;\bm\ell}(q) := \sum_{\bm m\in\mc S_{\bm\ell}}\frac{q^{NH(\bm m)}}{\prod_{j=1}^N (q^N;q^N)_{m_j}},
\end{equation}
and $\mc S_{\bm\ell} = \mc S_{\alpha,\beta;N;c;\bm\ell} := \cbm{\bm m\in\mc S_{\alpha,\beta;N;c}}{\bm m \equiv \bm\ell \pmod{N}}$. We always pick the representative for $\bm\ell = (\ell_1,\ldots,\ell_N) \in (\Z/N\Z)^N$ with $\ell_j \in \cb{0,1,\ldots,N-1}$. To apply the circle method, we need a change of variables $q \mapsto e^{-z}$. Write $\zeta_N := e^{2\pi i/N}$, and define
\[
g_{\bm\ell}(z) = g_{\alpha,\beta;N;c;\bm\ell}(z) := \sum_{\bm m \in \mc S_{\alpha,\beta;N;c;\bm\ell}} \frac{e^{-H(\bm m)z}}{\prod_{j=1}^N(e^{-z};e^{-z})_{m_j}}.
\]
Then for $k\in\Z$ we have
\[
D_{\bm\ell} (\zeta_N^k e^{-z}) = \zeta_N^{NH(\bm\ell)k} g_{\bm\ell}(Nz).
\]
We split
\begin{equation}\label{eq:g_split}
g_{\bm\ell}(z) = g_{\bm\ell}^{[1]}(z) + g_{\bm\ell}^{[2]}(z),
\end{equation}
where
\begin{align*}
g_{\bm\ell}^{[1]}(z) = g_{\alpha,\beta;N;c;\bm\ell}^{[1]}(z) &= \sum_{\bm m \in \mc S_{\bm\ell} \cap \mc N_{\varepsilon,\lambda}} \frac{e^{-H(\bm m)z}}{\prod_{j=1}^N (e^{-z};e^{-z})_{m_j}},\\
g_{\bm\ell}^{[2]}(z) = g_{\alpha,\beta;N;c;\bm\ell}^{[2]}(z) &= \sum_{\bm m \in \mc S_{\bm\ell} \bs \mc N_{\varepsilon,\lambda}} \frac{e^{-H(\bm m)z}}{\prod_{j=1}^N (e^{-z};e^{-z})_{m_j}}.
\end{align*}

From \Cref{prp:out_of_range}, we see that the contribution from the function $g_{\bm\ell}^{[2]}$ is negligible. The rest of the section provides asymptotic estimates of the function $g_{\bm\ell}^{[1]}$. For convenience, we set for $r\in\N_0$
\[
P_r(\bm u) := C_r(\bm u) e^{-\bm u^T\bm u},
\]
where the polynomial $C_r(\bm u)$ is defined in \eqref{eq:C_def}; in particular, we have $\deg(C_r)\le3r$. For the statements below,  the $\alpha$-th component $u_\alpha$ of $\bm u$ plays a special role. To emphasise this component, we write $\bm u = (\bm{u_{[1]}}, u_\alpha)$, where $\bm{u_{[1]}}$ denotes the remaining $N-1$ components of $\bm u$. Note that this notation does not say that $u_\alpha$ is the $N$-th component of $\bm u$. We write $\kappa = \kappa_{\alpha,\beta;N;c,\bm\ell}$ to denote the smallest integer such that $\kappa \ge c$ and $\kappa \equiv \ell_\alpha-\ell_\beta \pmod{N}$. We also recall that $\lambda$ is a fixed real number satisfying $-\frac 23 < \lambda < -\frac 12$ (see the discussion below \eqref{eq:N_def}).

\begin{prp}\label{prp:g1_asymp}
Suppose $y \ll \varepsilon^{1+\lambda+\delta}$ for some $\delta>0$. Then as $z\to 0$ we have for $R\in\N$
\begin{multline*}
g_{\bm\ell}^{[1]}(z) = \frac{e^{\frac{\pi^2N}{12z}}}{\sqrt{2}\pi^{N/2} N^N} \sum_{r=0}^{R-1} z^{r/2} \int_{\R^{N-1}} \int_{u_\beta+\kappa\sqrt{z}}^\infty P_r(\bm{u_{[1]}},u_\alpha) du_\alpha d\bm{u_{[1]}}\\
+ e^{\frac{\pi^2N}{12z}-\frac{\kappa^2z}2} \sum_{r=0}^{S(R)-1} W_r(\kappa) z^{(r+1)/2} + O\Brb{e^{\frac{\pi^2N}{12\varepsilon}}\varepsilon^{N\rb{\lambda+1/2}+3R\rb{\lambda+2/3}}},
\end{multline*}
where $S(R) := 4(R-1) + 2\big\lceil\frac{1/2 - \rb{\lambda+1/2}(3R+N-2)}{2\lambda+2}\big\rceil$, and $W_r$ are polynomials of degree $\le r$. In particular, for every $L\in\N$, we have as $z\to 0$, with $R_0 = R_0(L) := \big\lceil \frac{L-N(\lambda+1/2)}{3\lambda+2}\big\rceil$
\begin{multline*}
g_{\bm\ell}^{[1]}(z) = \frac{e^{\frac{\pi^2N}{12z}}}{\sqrt{2}\pi^{N/2} N^N} \sum_{r=0}^{R_0-1} z^{r/2} \int_{\R^{N-1}} \int_{u_\beta+\kappa\sqrt{z}}^\infty P_r(\bm{u_{[1]}},u_\alpha) du_\alpha d\bm{u_{[1]}}\\
+ e^{\frac{\pi^2N}{12z}-\frac{\kappa^2z}2} \sum_{r=0}^{S(R_0)-1} W_r(\kappa) z^{(r+1)/2} + O\Brb{e^{\frac{\pi^2N}{12\varepsilon}}\varepsilon^L}.
\end{multline*}
\end{prp}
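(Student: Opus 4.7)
The approach is to replace each summand of $g_{\bm\ell}^{[1]}(z)$ by its asymptotic expansion from Proposition \ref{prp:narrow_asymp} and then apply Proposition \ref{prp:EMF} coordinatewise to convert the resulting lattice sum into an $N$-fold integral with explicit boundary contributions. First, the hypothesis $y\ll\varepsilon^{1/3+\delta'}$ of Proposition \ref{prp:narrow_asymp} is satisfied because $\lambda>-2/3$ gives $1+\lambda>1/3$, so the assumed $y\ll\varepsilon^{1+\lambda+\delta}$ implies a bound of the required form. Under the substitution \eqref{eq:uj_def}, the summand on $\mc N_{\varepsilon,\lambda}$ becomes, up to a controlled error, $\frac{z^{N/2}e^{\pi^2N/(12z)}}{\sqrt 2\pi^{N/2}}\sum_{r=0}^{R-1}P_r(\bm u)z^{r/2}$, and the index set $\mc S_{\bm\ell}\cap\mc N_{\varepsilon,\lambda}$ is, in the $\bm u$-variables, a bounded region intersecting a shifted sublattice of spacing $N\sqrt z$ with $u_\alpha\ge u_\beta+\kappa\sqrt z$. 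Since $P_r$ carries a Gaussian factor $e^{-\bm u^T\bm u}$, extending this summation of the approximation to the entire sublattice contributes only a super-polynomially small tail, absorbed into the target error.

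Next, I apply Proposition \ref{prp:EMF} to each of the $N$ coordinates. For each $j\ne\alpha$ the sum runs over an unbounded progression of step $N\sqrt z$; all boundary values and Bernoulli correction terms decay rapidly, leaving only the integral $\frac{1}{(N\sqrt z)^{N-1}}\int_{\R^{N-1}}(\cdot)\,d\bm u_{[1]}$. For the coordinate $\alpha$, starting at $u_\alpha=u_\beta+\kappa\sqrt z$, Proposition \ref{prp:EMF} outputs (i) the interior integral $\frac{1}{N\sqrt z}\int_{u_\beta+\kappa\sqrt z}^\infty(\cdot)\,du_\alpha$, (ii) the boundary term $\tfrac 12 f|_{u_\alpha=u_\beta+\kappa\sqrt z}$, and (iii) Bernoulli corrections of the shape $(N\sqrt z)^{2s-1}\partial_{u_\alpha}^{2s-1}f|_{u_\alpha=u_\beta+\kappa\sqrt z}$. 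Combining the interior integrals with the overall prefactor and the Jacobian $z^{-N/2}$ produces precisely the first sum of the statement with coefficient $\frac{1}{\sqrt 2\pi^{N/2}N^N}$.

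The boundary contributions (ii) and (iii) assemble into the $W_r(\kappa)$ sum: setting $u_\alpha=u_\beta+\kappa\sqrt z$ in $e^{-u_\alpha^2-u_\beta^2}$ and completing the square in $u_\beta$ extracts the factor $e^{-\kappa^2z/2}$ against a shifted Gaussian, and integrating $P_r$ (of degree $\le 3r$) against $\bm u_{[1]}$ gives a polynomial in $\kappa\sqrt z$ whose expansion in powers of $z^{1/2}$, collected across indices $r$ and $s$, yields $e^{\pi^2N/(12z)-\kappa^2z/2}\sum_r W_r(\kappa)z^{(r+1)/2}$, a careful degree count pinning down $\deg W_r\le r$. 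The cut-off $S(R)$ is chosen so that omitted boundary terms lie within the target error, which itself is built from three sources: the per-summand error of Proposition \ref{prp:narrow_asymp} (of size $\varepsilon^{3R\delta_1}$ summed over $\asymp\varepsilon^{\lambda N}$ lattice points and multiplied by $\varepsilon^{N/2}$), the Gaussian tail outside $\mc N_{\varepsilon,\lambda}$, and the Euler--Maclaurin remainder. The second assertion follows by choosing $R=R_0(L)$ so that the exponent reaches $L$. The principal obstacle, I expect, is the bookkeeping in this final step: reconciling the derivative orders from the Bernoulli corrections with the polynomial degrees of the $P_r$ and the half-integer powers of $z$, both in order to secure $\deg W_r\le r$ and to verify that the three error sources combine to exactly $\varepsilon^{N(\lambda+1/2)+3R(\lambda+2/3)}e^{\pi^2N/(12\varepsilon)}$.
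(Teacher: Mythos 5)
Your proposal follows essentially the same route as the paper: first substitute the narrow-cone expansion of Proposition~\ref{prp:narrow_asymp} for the summand on $\mc N_{\varepsilon,\lambda}$ (the paper's Lemma~\ref{lem:g1_sum_asymp}), then apply Euler--Maclaurin coordinatewise, keeping the integral term for the $j\neq\alpha$ directions (where boundary and Bernoulli terms decay rapidly) and keeping both integral and boundary contributions at the $u_\alpha$ endpoint $u_\beta+\kappa\sqrt z$, which after completing the square produce the $e^{-\kappa^2z/2}\sum_r W_r(\kappa)z^{(r+1)/2}$ piece. The only caution worth flagging is that the Euler--Maclaurin summation over $u_\alpha$ must be performed before $u_\beta$ is integrated out (since the lower endpoint depends on $u_\beta$); the paper handles $u_\alpha$ first and then integrates the remaining coordinates via the function $F_r(\bm u_{[1]})$, and your sketch is consistent with this provided the nesting is read that way, with the residual bookkeeping (degree bound $\deg W_r\le r$ via $\deg C^{*}_{r,j}\le 3r+j$, and the precise form of $S(R)$) being exactly the technical work you correctly anticipate.
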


We prove \Cref{prp:g1_asymp} in a series of lemmas, most of which are analogous to those found in \cite[Section 5]{BMRS24AP}; the corresponding proofs are thus omitted. Let $\mc T_{\bm\ell} = \mc T_{\alpha,\beta;N;c;\bm\ell}$ be the bijective image of $\mc S_{\alpha,\beta;N;c;\bm\ell}$ under the map $\bm m \mapsto \bm u$ given in \eqref{eq:uj_def}, and $\mc U_{\varepsilon,\lambda}$ the bijective image of $\mc N_{\varepsilon,\lambda}$ under the same map. Since $\bm m\mapsto \bm u$ takes $\R^N$ to $(-\log(2)/\sqrt{z}+\R\sqrt{z})^N$, we have $\mc T_{\bm\ell} \sbe (-\log(2)/\sqrt{z}+\R\sqrt{z})^N \sbe \C^N$. 

\begin{lem}[cf. {\cite[Lemma 5.2]{BMRS24AP}}]\label{lem:out_of_range}
If $y\ll \varepsilon^{1+\lambda+\delta}$ for some $\delta>0$, and $Q(\bm u)$ is a polynomial in $\bm u$, then as $z\to 0$ we have
\[
\sum_{\bm u \in \mc T_{\bm\ell}\bs\mc U_{\varepsilon,\lambda}} \vb{Q(\bm u) e^{-\bm u^T\bm u}} = O\rb{\varepsilon^L}, \quad \text{ and } \quad \int_{(-\log(2)/\sqrt{z}+\R\sqrt{z})^N\bs\mc U_{\varepsilon,\lambda}} \vb{Q(\bm u) e^{-\bm u^T\bm u}} d\bm u = O\rb{\varepsilon^L}.
\]
\end{lem}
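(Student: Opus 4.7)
The plan is to translate the condition $\bm m\notin\mc N_{\varepsilon,\lambda}$ (equivalently $\bm u\notin\mc U_{\varepsilon,\lambda}$) into a quantitative lower bound on $\re(\bm u^T\bm u)$, and then exploit $1+2\lambda<0$ to obtain super-polynomial decay of the Gaussian factor $e^{-\bm u^T\bm u}$. Set $\bm\mu := \bm m - \tfrac{\log(2)}{\varepsilon}\bm 1$, so that $|\bm\mu|>\varepsilon^\lambda$ throughout the domain of summation/integration. A Taylor expansion of $u_j = \sqrt{z}(m_j-\log(2)/z)$ in the small parameter $y$ yields $\re(u_j) = \mu_j\sqrt{\varepsilon}\rb{1+O(y^2)}$ and $\im(u_j)\ll y\rb{|\mu_j|\sqrt{\varepsilon}+\varepsilon^{-1/2}}$, whence
\[\re(\bm u^T\bm u) = \sum_{j=1}^N\rb{(\re u_j)^2-(\im u_j)^2} \gg |\bm\mu|^2\varepsilon - y^2\rb{|\bm\mu|^2\varepsilon + N\varepsilon^{-1}}.\]
The hypothesis $y\ll\varepsilon^{1+\lambda+\delta}$ together with $|\bm\mu|>\varepsilon^\lambda$ forces $y^2\varepsilon^{-1}\ll\varepsilon^{2\delta}|\bm\mu|^2\varepsilon$, so the imaginary contribution is absorbed and I obtain $\re(\bm u^T\bm u)\gg|\bm\mu|^2\varepsilon \gg \varepsilon^{1+2\lambda}$, which diverges as $\varepsilon\to 0$.

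For the integral bound I would rescale $\bm\mu = \bm\nu/\sqrt{\varepsilon}$ (so that the constraint becomes $|\bm\nu|>\varepsilon^{\lambda+1/2}$, with $\lambda+\tfrac12<0$), majorise $|Q(\bm u)|$ by a polynomial in $|\bm\nu|$ and $\varepsilon^{-1/2}$, and apply the standard Gaussian tail estimate
\[\int_{|\bm\nu|>R}|\bm\nu|^k e^{-c|\bm\nu|^2}d\bm\nu \ll R^{k+N-2} e^{-cR^2}\]
with $R = \varepsilon^{\lambda+1/2}$. Since $e^{-c\varepsilon^{2\lambda+1}}$ decays faster than any power of $\varepsilon$, its product with any polynomial in $\varepsilon^{-1}$ is $O(\varepsilon^L)$ for every $L$. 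For the sum, $\mc T_{\bm\ell}$ is contained in the shifted lattice $-\log(2)/\sqrt{z}+\sqrt{z}\,\Z^N$ of spacing $\asymp\sqrt{\varepsilon}$, so a routine lattice-versus-integral comparison (exploiting monotonicity of the Gaussian in the tail) majorises the sum by $\varepsilon^{-N/2}$ times the analogous integral; the extra factor $\varepsilon^{-N/2}$ is again swallowed by the super-polynomial decay.

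The main obstacle is the first step: keeping track of the imaginary parts of $u_j$ with enough precision that $\re(\bm u^T\bm u)$ retains both positivity and the claimed magnitude across the whole strip $|y|\ll\varepsilon^{1+\lambda+\delta}$. The margin $\delta>0$ in the hypothesis is crucial here, since at the boundary $y\asymp\varepsilon^{1+\lambda}$ the cross term $y^2\varepsilon^{-1}$ would become comparable to $|\bm\mu|^2\varepsilon$, and the Gaussian decay could potentially degenerate to merely polynomial smallness.
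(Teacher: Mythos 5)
The paper omits the proof, citing it as analogous to [BMRS24AP, Lemma 5.2], and the remark that follows the lemma confirms that the only substantive ingredient is the quantitative Gaussian decay of $Q(\bm u)e^{-\bm u^T\bm u}$ outside $\mc U_{\varepsilon,\lambda}$. Your argument is exactly this: establish $\re(\bm u^T\bm u)\gg\varepsilon|\bm\mu|^2\gg\varepsilon^{1+2\lambda}\to\infty$ and feed the resulting super-polynomial smallness into a Gaussian tail bound, with a lattice-versus-integral comparison to pass from the integral to the sum. This is the intended route.

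One technical slip worth flagging: the claim $\re(u_j)=\mu_j\sqrt{\varepsilon}\rb{1+O(y^2)}$ is not quite right, because the second summand in $u_j=\sqrt{\varepsilon}\sqrt{1+iy}\,\mu_j+\frac{\log(2)\,iy}{\sqrt{\varepsilon}\sqrt{1+iy}}$ contributes $\re\!\rb{\frac{iy}{\sqrt{1+iy}}}=\frac{y^2}{2}+O(y^3)$, giving an extra $O(y^2\varepsilon^{-1/2})$ in $\re(u_j)$. The cross term $O(y^2|\bm\mu|)$ this introduces in $(\re u_j)^2$ is still absorbed by $\varepsilon|\bm\mu|^2$ using $|\bm\mu|>\varepsilon^\lambda$, $y\ll\varepsilon^{1+\lambda+\delta}$, and $\lambda>-1$, so the conclusion stands, but as written the expansion is literally incorrect. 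In fact the argument simplifies if you compute $\re(u_j^2)$ directly: expanding the square gives
\[
u_j^2=\varepsilon(1+iy)\mu_j^2+2\log(2)\,iy\,\mu_j-\frac{\log(2)^2y^2}{\varepsilon(1+iy)},\qquad
\re(u_j^2)=\varepsilon\mu_j^2-\frac{\log(2)^2 y^2}{\varepsilon(1+y^2)},
\]
so the cross term vanishes identically, and the absorption you need is only $y^2/\varepsilon=o\rb{\varepsilon|\bm\mu|^2}$ — precisely where $\delta>0$ enters, as you correctly note at the end.
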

\begin{rmk}
The only important ingredient in \Cref{lem:out_of_range} is that the summand (resp. integrand) $Q(\bm u) e^{-\bm u^T\bm u}$ has exponential decay outside $\mc U_{\varepsilon,\lambda}$, and it makes essentially no difference whether we sum or integrate. In fact, with the same proof as in \cite{BMRS24AP}, we may sum over some of the variables and integrate over the other variables, and obtain the same bound.
\end{rmk}

\begin{lem}[{\cite[Lemma 5.3]{BMRS24AP}}]\label{lem:exponential_bound}
If $y\ll \varepsilon^{1+\lambda+\delta}$ for some $\delta>0$, and $\bm u \in \mc U_{\varepsilon,\lambda}$, then as $z\to 0$ we have
\[
\Bvb{e^{\frac{\pi^2N}{12z}-\bm u^T\bm u}} \le e^{\frac{\pi^2N}{12\varepsilon}}.
\]
\end{lem}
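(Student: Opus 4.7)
The plan is to reduce the lemma to an explicit inequality by computing the real part of the exponent directly. Since the bound to be proved is the simplest possible ($\re$ of exponent $\le \pi^2 N/(12\varepsilon)$), I expect no asymptotics or error terms, only a pointwise computation.

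First I would parametrise: write $z = \varepsilon(1 + iy)$ and, following the definition $\bm m = (\log 2/\varepsilon)\bm 1 + \bm\mu$ of $\mathcal N_{\varepsilon,\lambda}$ combined with \eqref{eq:uj_def}, express
\[
u_j = \sqrt{z}\left(\frac{\log 2}{\varepsilon} + \mu_j - \frac{\log 2}{z}\right) = \mu_j\sqrt{z} + \frac{iy\log 2}{\sqrt{z}}.
\]
Squaring and summing gives
\[
\bm u^T\bm u = |\bm\mu|^2 z + 2iy\log(2)\sum_{j=1}^N\mu_j - \frac{Ny^2(\log 2)^2}{z}.
\]
Since $y$ and $\bm\mu$ are real, the middle term is purely imaginary. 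Using $\re(1/z) = 1/(\varepsilon(1+y^2))$, one obtains
\[
\re(\bm u^T\bm u) = \varepsilon|\bm\mu|^2 - \frac{Ny^2(\log 2)^2}{\varepsilon(1+y^2)}.
\]

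Next I would subtract this from $\re(\pi^2 N/(12z)) = \pi^2 N/(12\varepsilon(1+y^2))$ and show that the total is bounded by $\pi^2 N/(12\varepsilon)$. Multiplying through by $\varepsilon(1+y^2)$ and simplifying, the desired inequality reduces to
\[
Ny^2\left((\log 2)^2 - \frac{\pi^2}{12}\right) \le \varepsilon^2|\bm\mu|^2(1+y^2).
\]
The right-hand side is non-negative. The left-hand side is non-positive because of the purely numerical fact $(\log 2)^2 < \pi^2/12$ (equivalently, $\pi > 2\sqrt{3}\log 2$, which is clear from $\pi \approx 3.14$ and $2\sqrt{3}\log 2 \approx 2.40$). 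So the inequality holds with room to spare, and the lemma follows.

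The ``hard part'' is essentially nonexistent—once the parametrisation is carried out, the decisive fact is the elementary constant inequality $(\log 2)^2 < \pi^2/12$. Notably, the full strength of the hypotheses $|\bm\mu| \le \varepsilon^\lambda$ and $y \ll \varepsilon^{1+\lambda+\delta}$ is not used in the bound itself; only the reality of $\bm\mu$ and $y$ (i.e.\ that $\bm u \in \mathcal U_{\varepsilon,\lambda}$ comes from a vector $\bm m \in \mathcal N_{\varepsilon,\lambda}$ with the prescribed parametrisation) enters the argument. These hypotheses are retained in the statement for compatibility with the neighbouring lemmas in which they are genuinely needed.
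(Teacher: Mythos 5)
Your proof is correct, and it takes the natural direct approach that the cited paper [BMRS24AP] also uses: parametrise $u_j = \mu_j\sqrt{z} + iy\log(2)/\sqrt{z}$, compute the real part of the exponent $\frac{\pi^2N}{12z}-\bm u^T\bm u$, and reduce the claimed bound to the elementary constant inequality $(\log 2)^2 < \pi^2/12$ (the same inequality that underlies \Cref{lem:sy}). Your remark that the bound is in fact pointwise in $y$ and $\bm\mu$ and uses neither $|\bm\mu|\le\varepsilon^\lambda$ nor $y\ll\varepsilon^{1+\lambda+\delta}$ in an essential way is accurate; those hypotheses are carried in the statement for the surrounding lemmata, exactly as you say.
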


\begin{lem}[{\cite[Lemma 5.4]{BMRS24AP}}]\label{lem:g1_sum_asymp}
If $y\ll \varepsilon^{1+\lambda+\delta}$ for some $\delta>0$, then for $R\in\N$ we have, as $z\to 0$,
\[
g_{\bm\ell}^{[1]}(z) = \rb{\frac z\pi}^{N/2} \frac{1}{\sqrt{2}} \sum_{r=0}^{R-1} \sum_{\bm u \in \mc T_{\bm\ell} \cap \mc U_{\varepsilon,\lambda}} e^{\frac{\pi^2N}{12z}} P_r(\bm u) z^{r/2} + O\Brb{e^{\frac{\pi^2N}{12\varepsilon}}\varepsilon^{N\rb{\lambda+1/2}+3R\rb{\lambda+2/3}}}.
\]
\end{lem}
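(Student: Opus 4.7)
The plan is to start from the expression for $g_{\bm\ell}^{[1]}(z)$ supplied by Lemma \ref{lem:g1_sum_asymp} and convert the resulting lattice sum into integrals plus boundary terms by iterated use of the Euler--Maclaurin formula (Proposition \ref{prp:EMF}). First I would extend the range of summation from $\mc T_{\bm\ell}\cap\mc U_{\varepsilon,\lambda}$ to all of $\mc T_{\bm\ell}$; the tail estimate in Lemma \ref{lem:out_of_range} shows the discrepancy is $O(\varepsilon^L)$, harmless at any fixed order. The set $\mc T_{\bm\ell}$ is a shifted rectangular sublattice with step $N\sqrt{z}$ in each coordinate (the congruence $m_j\equiv\ell_j\pmod N$ produces spacing $N$ in $m$-space and hence $N\sqrt{z}$ in $u$-space), cut out by the constraint $u_\alpha - u_\beta \ge \kappa\sqrt{z}$, where $\kappa$ is the smallest integer $\ge c$ congruent to $\ell_\alpha-\ell_\beta\pmod N$.

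I would then apply Proposition \ref{prp:EMF} coordinate by coordinate. In the $N-1$ directions $u_j$ with $j\ne\alpha$ (including $u_\beta$) the summand $P_r(\bm u)=C_r(\bm u)e^{-\bm u^T\bm u}$ has Gaussian decay, so combined with Lemma \ref{lem:out_of_range} all endpoint boundary contributions are exponentially small and each sum collapses to an unrestricted integral $\int_\R du_j$ with prefactor $1/(N\sqrt{z})$. In the $u_\alpha$ direction, however, the constraint produces a genuine lower endpoint at $u_\alpha=u_\beta+\kappa\sqrt{z}$; applying Proposition \ref{prp:EMF} there yields the half-infinite integral $\int_{u_\beta+\kappa\sqrt{z}}^\infty P_r(\bm{u_{[1]}},u_\alpha)\,du_\alpha$ together with the endpoint corrections $\tfrac12 P_r$ and the Bernoulli derivative terms $\tfrac{B_{2s}(N\sqrt{z})^{2s-1}}{(2s)!}\partial_{u_\alpha}^{2s-1}P_r$ evaluated at $u_\alpha=u_\beta+\kappa\sqrt{z}$.

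Collecting prefactors, converting all $N$ coordinates to integrals combines the $(z/\pi)^{N/2}/\sqrt{2}$ from Lemma \ref{lem:g1_sum_asymp} with $(N\sqrt{z})^{-N}$ to give exactly the coefficient $1/(\sqrt{2}\pi^{N/2}N^N)$ of the main integral. For the boundary contributions only $N-1$ directions are integrated, leaving one extra $\sqrt{z}$, and each Bernoulli derivative of odd order $2s-1$ brings in another $(\sqrt{z})^{2s-1}$; combined with the $z^{r/2}$ from the Lemma \ref{lem:g1_sum_asymp} expansion these regroup as $\sum_r W_r(\kappa)z^{(r+1)/2}$. The factor $e^{-\kappa^2 z/2}$ arises from the remaining Gaussian integration in $u_\beta$ at the endpoint: completing the square in $e^{-u_\beta^2-(u_\beta+\kappa\sqrt{z})^2}=e^{-2(u_\beta+\kappa\sqrt{z}/2)^2}e^{-\kappa^2 z/2}$ extracts $e^{-\kappa^2 z/2}$ and leaves a Gaussian whose polynomial factor, built from $C_r$ and its derivatives of degree at most $3r$, yields $W_r(\kappa)$ of degree $\le r$ once the $u_\beta$ Gaussian moments are computed (odd powers of $u_\beta$ drop out, controlling the final degree in $\kappa$).

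The principal technical obstacle is the error bookkeeping. To guarantee that the Euler--Maclaurin remainder in the $u_\alpha$ direction is absorbed into the stated bound $\varepsilon^{N(\lambda+1/2)+3R(\lambda+2/3)}$, one must retain enough Bernoulli terms; using the operating bounds $z\asymp\varepsilon$ and $|u_j|\le\varepsilon^\lambda$ from the remark after Proposition \ref{prp:wide_asymp} and balancing the remainder $z^{2S}\int|\partial_{u_\alpha}^{2S+1}P_r|$ against the target produces exactly the cutoff $S(R)=4(R-1)+2\lceil(1/2-(\lambda+1/2)(3R+N-2))/(2\lambda+2)\rceil$ as the number of boundary terms that must be kept. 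The ``in particular'' form of the proposition then follows by choosing $R=R_0(L)=\lceil(L-N(\lambda+1/2))/(3\lambda+2)\rceil$, the smallest $R$ for which $N(\lambda+1/2)+3R(\lambda+2/3)\ge L$, so that the generic error is absorbed into $O(e^{\pi^2N/(12\varepsilon)}\varepsilon^L)$.
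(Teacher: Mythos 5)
Your proposal is a proof of the wrong statement. You write ``start from the expression for $g_{\bm\ell}^{[1]}(z)$ supplied by Lemma~\ref{lem:g1_sum_asymp}'' and then feed it through Euler--Maclaurin---but Lemma~\ref{lem:g1_sum_asymp} is exactly the result you are being asked to prove. Everything you then develop (the extension of $\mc T_{\bm\ell}\cap\mc U_{\varepsilon,\lambda}$ to $\mc T_{\bm\ell}$, the iterated Euler--Maclaurin in the $N-1$ directions $j\ne\alpha$, the boundary contributions at $u_\alpha=u_\beta+\kappa\sqrt{z}$, the polynomials $W_r(\kappa)$, the factor $e^{-\kappa^2 z/2}$ from completing the square, the cutoff $S(R)$, and the ``in particular'' form with $R_0(L)$) is the content of Lemma~\ref{lem:P_asymp} and Proposition~\ref{prp:g1_asymp}, which are subsequent results that \emph{use} Lemma~\ref{lem:g1_sum_asymp} as input. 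In effect you have circularly assumed the conclusion.

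What Lemma~\ref{lem:g1_sum_asymp} actually asserts is that in the definition
$g_{\bm\ell}^{[1]}(z)=\sum_{\bm m\in\mc S_{\bm\ell}\cap\mc N_{\varepsilon,\lambda}}e^{-H(\bm m)z}/\prod_j(e^{-z};e^{-z})_{m_j}$
one may replace each summand by its truncated expansion from Proposition~\ref{prp:narrow_asymp}, at the stated cost. The proof requires three observations, none of which appear in your argument: (i) the hypothesis $y\ll\varepsilon^{1+\lambda+\delta}$ implies $y\ll\varepsilon^{1/3+\delta'}$ with $\delta'=\lambda+\tfrac23>0$, so Proposition~\ref{prp:narrow_asymp} is applicable with $\delta_1=\lambda+\tfrac23$, giving termwise error $O(\varepsilon^{3R(\lambda+2/3)})$; (ii) the prefactor $\big|(z/\pi)^{N/2}e^{\pi^2N/12z-\bm u^T\bm u}\big|\ll\varepsilon^{N/2}e^{\pi^2N/12\varepsilon}$ by Lemma~\ref{lem:exponential_bound}; (iii) $\#(\mc S_{\bm\ell}\cap\mc N_{\varepsilon,\lambda})\ll\varepsilon^{N\lambda}$ since $\mc N_{\varepsilon,\lambda}$ is a ball of radius $\varepsilon^\lambda$ in the integer lattice. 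Multiplying (i)--(iii) gives precisely the error $O\big(e^{\pi^2N/12\varepsilon}\varepsilon^{N(\lambda+1/2)+3R(\lambda+2/3)}\big)$. You would need to supply this argument rather than the one you gave, which belongs a step further down the chain.
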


The next lemma estimates the inner sum over $\bm u$ in \Cref{lem:g1_sum_asymp}.

\begin{lem}\label{lem:P_asymp}
Suppose $y\ll \varepsilon^{1+\lambda+\delta}$ for some $\delta>0$. For $L\in\N$, define $R_1 = R_1(r,L) := \big\lceil\frac{L/2-(\lambda+1/2)(3r+N+1)}{2\lambda+2}\big\rceil$. Then as $z\to 0$ we have 
\begin{multline*}
\sum_{\bm u \in \mc T_{\bm\ell} \cap \mc U_{\varepsilon,\lambda}} e^{\frac{\pi^2N}{12z}} P_r(\bm u) = \frac{1}{z^{N/2} N^N} \int_{\R^{N-1}} \int_{u_\beta+\kappa\sqrt{z}}^\infty e^{\frac{\pi^2N}{12z}} P_r(\bm{u_{[1]}},u_\alpha) du_\alpha d\bm{u_{[1]}}\\
+ e^{\frac{\pi^2N}{12z}-\frac{\kappa^2z}2} \sum_{j=0}^{3r+2R_1-1} V_{r,j}(\kappa) z^{(j+1-N)/2} + O\Brb{e^{\frac{\pi^2N}{12\varepsilon}} \varepsilon^{(L+1-N)/2}}.
\end{multline*}
for some polynomials $V_{r,j}$ of degree $\le j$.
\end{lem}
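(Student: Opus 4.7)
The plan is to apply the Euler--Maclaurin summation formula (\Cref{prp:EMF}) iteratively to each of the $N$ summation variables. I would first use \Cref{lem:out_of_range} together with \Cref{lem:exponential_bound} to replace the sum over $\mc T_{\bm\ell}\cap\mc U_{\varepsilon,\lambda}$ by the sum over all of $\mc T_{\bm\ell}$, incurring only an error of size $O(e^{\pi^2N/12\varepsilon}\varepsilon^L)$. On $\mc T_{\bm\ell}$, each coordinate $u_j$ runs through an arithmetic progression of common difference $N\sqrt z$; the smallest value of $u_\alpha$ is $u_\beta+\kappa\sqrt z$, reflecting the constraint $m_\alpha-m_\beta\ge c$ together with the congruence $\bm m\equiv\bm\ell\pmod{N}$, while for $j\ne\alpha$ the smallest value of $u_j$ equals $\ell_j\sqrt z-\log(2)/\sqrt z$, which lies far to the left of $\mc U_{\varepsilon,\lambda}$.

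Next, I would apply \Cref{prp:EMF} to the $u_\alpha$ sum (with $a=u_\beta+\kappa\sqrt z$ and step $N\sqrt z$), which produces the integral $\frac{1}{N\sqrt z}\int_{u_\beta+\kappa\sqrt z}^\infty P_r(\bm u)\,du_\alpha$, the boundary value $\frac12 P_r(\bm u)|_{u_\alpha=u_\beta+\kappa\sqrt z}$, a truncated series of Bernoulli corrections involving the odd $u_\alpha$-derivatives $P_r^{(2s-1)}$ at the same endpoint for $s=1,\ldots,R_1$, and a remainder. I would then apply \Cref{prp:EMF} to each of the remaining $N-1$ sums; because $P_r=C_r\,e^{-\bm u^T\bm u}$ and all of its derivatives decay like Gaussians, and the starting points $\ell_j\sqrt z-\log(2)/\sqrt z$ lie far outside $\mc U_{\varepsilon,\lambda}$, the boundary and Bernoulli-correction contributions at those points are negligible. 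Consequently each of those sums reduces to $\frac{1}{N\sqrt z}\int_\R(\cdots)\,du_j$ up to an error of $O(\varepsilon^L)$.

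The integral terms combine exactly into the main term $\frac{1}{z^{N/2}N^N}\int_{\R^{N-1}}\int_{u_\beta+\kappa\sqrt z}^\infty P_r(\bm u)\,du_\alpha\,d\bm{u_{[1]}}$ stated in the lemma. For the boundary value and each Bernoulli correction, I would carry out the $\bm{u_{[1]}}$ integration after completing the square via $u_\beta^2+(u_\beta+\kappa\sqrt z)^2=2(u_\beta+\kappa\sqrt z/2)^2+\kappa^2z/2$; this extracts the factor $e^{-\kappa^2z/2}$ and reduces the remaining Gaussian moments of polynomials in $\bm{u_{[1]}}$ and $\kappa\sqrt z$ to polynomials purely in $\kappa\sqrt z$. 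Regrouping according to powers of $z^{1/2}$ then yields the asserted form $e^{-\kappa^2z/2}\sum_jV_{r,j}(\kappa)z^{(j+1-N)/2}$.

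The main obstacle will be verifying the degree bound $\deg V_{r,j}\le j$ and the precise truncation level $R_1$. A direct degree count shows that the $(2s-1)$-th $u_\alpha$-derivative of $P_r$ is a polynomial of degree at most $3r+2s-1$ times $e^{-\bm u^T\bm u}$, and the Gaussian integration produces a polynomial in $\kappa\sqrt z$ of the same degree. Matching powers of $z^{1/2}$ across the boundary term (which contributes $\kappa^0,\ldots,\kappa^{3r}$ at levels $j=0,\ldots,3r$) and the Bernoulli corrections at level $s\ge 1$ (which contribute $\kappa^{j-2s+1}$ to the power $z^{(j+1-N)/2}$) gives $\deg V_{r,j}\le j$. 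The formula for $R_1$ is then forced by balancing the Euler--Maclaurin remainder against the $(N\sqrt z)^{-(N-1)}$ prefactor and the bound $\vb{\bm u}\ll\varepsilon^{\lambda+1/2}$ on $\mc U_{\varepsilon,\lambda}$, so as to achieve the stated final error $O(e^{\pi^2N/12\varepsilon}\varepsilon^{(L+1-N)/2})$.
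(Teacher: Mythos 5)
Your proposal is correct and follows essentially the same approach as the paper: extend the truncated sum via \Cref{lem:out_of_range}, apply Euler--Maclaurin (\Cref{prp:EMF}) coordinate-by-coordinate (keeping the endpoint terms only for the $u_\alpha$-sum, since for the other coordinates the lower endpoint $-\log(2)/\sqrt z + \ell_\gamma\sqrt z$ lies far outside $\mc U_{\varepsilon,\lambda}$ where $P_r$ and its derivatives are negligible), then complete the square in $u_\beta$ to extract $e^{-\kappa^2z/2}$ and reduce the endpoint/Bernoulli contributions to Gaussian moments yielding the polynomials $V_{r,j}(\kappa)$. The paper organizes this by summing over $u_\alpha$ first into an auxiliary function $F_r(\bm{u_{[1]}})$ and then iterating Euler--Maclaurin over the remaining coordinates, shifting the resulting line integrals to $\R^{N-1}$ by holomorphy and rapid decay; your description matches this exactly, including the degree bookkeeping (the $(2s-1)$-th derivative contributing degree $\le 3r+2s-1$) and the choice of $R_1(r,L)$ to balance the Euler--Maclaurin remainder $O\bigl(e^{\pi^2N/12\varepsilon}\varepsilon^{(\lambda+1/2)(3r+2R+N+1)+R+(1-N)/2}\bigr)$ against the target $\varepsilon^{(L+1-N)/2}$.
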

\begin{proof}
Swapping $\alpha$ and $\beta$ when necessary, we may assume $c\ge 0$. For the first step, we consider the sum over $u_\alpha$. We decompose
\begin{equation}\label{eq:P_sum_decomp}
\sum_{\bm u \in \mc T_{\bm\ell} \cap \mc U_{\varepsilon,\lambda}} e^{\frac{\pi^2N}{12z}} P_r(\bm u) = \sum_{\substack{\bm{u_{[1]}}\in\C^{N-1} \\ \exists u_\alpha \in\C, (\bm{u_{[1]}},u_\alpha)\in \mc T_{\bm\ell}\cap \mc U_{\varepsilon,\lambda}}} \sum_{\substack{u_\alpha\in \mf u(\bm{u_{[1]}}) \\ (\bm{u_{[1]}},u_\alpha)\in\mc U_{\varepsilon,\lambda}}} e^{\frac{\pi^2N}{12z}} P_r(\bm{u_{[1]}},u_\alpha),
\end{equation}
where $\mf u(\bm{u_{[1]}}) := \cbm{u_\beta+t\sqrt{z}}{t \in \kappa + N\N_0}$. If $\bm{u_{[1]}}\in\C^{N-1}$ is such that there exists $u_\alpha\in\C$ with $(\bm{u_{[1]}},u_\alpha)\in\mc T_{\bm\ell} \cap \mc U_{\varepsilon,\lambda}$, then we have $\mf u(\bm{u_{[1]}}) = \cbm{u_\alpha\in\C}{(\bm{u_{[1]}},u_\alpha)\in\mc T_{\bm\ell}}$, so the decomposition \eqref{eq:P_sum_decomp} makes sense. By \Cref{lem:out_of_range}, we may extend the inner sum in \eqref{eq:P_sum_decomp} by removing the condition $(\bm{u_{[1]}},u_\alpha) \in\mc U_{\varepsilon,\lambda}$. This introduces an error term of size $O(e^{\pi^2N/12\varepsilon}\varepsilon^L)$ for all $L\in\N$, which is negligible for our purpose. Applying \Cref{prp:EMF} with $a = u_\beta+\kappa\sqrt{z}$, $z\mapsto N\sqrt{z}$ gives
\begin{multline}\label{eq:P_sum_EMF_crude}
\sum_{u_\alpha \in \mf u(\bm{u_{[1]}})} e^{\frac{\pi^2N}{12z}} P_r(\bm{u_{[1]}},u_\alpha) = \frac{1}{N\sqrt{z}} \int_{u_\beta+\kappa\sqrt{z}}^{u_\beta+\infty\sqrt{z}} e^{\frac{\pi^2N}{12z}} P_r(\bm{u_{[1]}},u_\alpha) du_\alpha\\ + \frac{1}2 e^{\frac{\pi^2N}{12z}} P_r\rb{\bm{u_{[1]}}, u_\beta+\kappa\sqrt{z}} -\sum_{j=1}^R \frac{B_{2j} N^{2j-1} z^{j-1/2}}{(2j)!} e^{\frac{\pi^2N}{12z}} P_r^{(2j-1)}\rb{\bm{u_{[1]}},u_\beta+\kappa\sqrt{z}}\\
+ O(1)z^R \int_{u_\beta+\kappa\sqrt{z}}^{u_\beta+\infty\sqrt{z}} \Bvb{e^{\frac{\pi^2N}{12z}} P_r^{(2R+1)}(\bm{u_{[1]}},u_\alpha)} du_\alpha,
\end{multline}
where the derivative is taken with respect to $u_\alpha$.

First we estimate the error term in \eqref{eq:P_sum_EMF_crude}. By a direct calculation, we see that for $k\in\N$ we have
\begin{equation}\label{eq:C_derivative_def}
P_r^{(k)}(\bm u) = C_{r,k}(\bm u) e^{-\bm u^T\bm u},
\end{equation}
for some polynomial $C_{r,k}(\bm u)$ of degree at most $3r+k$. By \Cref{lem:out_of_range}, we may restrict the second integral in \eqref{eq:P_sum_EMF_crude} to $\mc U_{\varepsilon,\lambda}$, introducing an error of size $O(e^{\pi^2N/12\varepsilon}\varepsilon^L)$ for all $L\in\N$. Now we consider the part of the integral over $\mc U_{\varepsilon,\lambda}$, which has measure $\ll \varepsilon^{\lambda+1/2}$. For $\bm u \in \mc U_{\varepsilon,\lambda}$, we use \Cref{lem:exponential_bound} and that $\vb{\bm u}\ll \varepsilon^{\lambda+1/2}$ to conclude that
\begin{equation*}
e^{\frac{\pi^2N}{12z}} P_r^{(k)}(\bm u) \ll e^{\frac{\pi^2N}{12\varepsilon}} \varepsilon^{\rb{\lambda+1/2}(3r+k)}.
\end{equation*}
Hence we have, after restricting the integral to over $\mc U_{\varepsilon,\lambda}$, the bound
\[
\int_{u_\beta+\kappa\sqrt{z}}^{u_\beta+\infty\sqrt{z}} \Bvb{e^{\frac{\pi^2N}{12z}} P_r^{(2R+1)}(\bm{u_{[1]}},u_\alpha)} du_\alpha \ll e^{\frac{\pi^2N}{12\varepsilon}} \varepsilon^{\rb{\lambda+1/2}(3r+2R+2)},
\]
and thus we have as $z\to 0$
\begin{multline}\label{eq:P_sum_EMF_refined}
\sum_{u_\alpha \in \mf u(\bm{u_{[1]}})} e^{\frac{\pi^2N}{12z}} P_r(\bm{u_{[1]}},u_\alpha) = \frac{1}{N\sqrt{z}} \int_{u_\beta+\kappa\sqrt{z}}^{u_\beta+\infty\sqrt{z}} e^{\frac{\pi^2N}{12z}} P_r(\bm{u_{[1]}},u_\alpha) du_\alpha + \frac 12 e^{\frac{\pi^2N}{12z}} P_r\rb{\bm{u_{[1]}}, u_\beta+\kappa\sqrt{z}}\\
-\sum_{j=1}^R \frac{B_{2j} N^{2j-1} z^{j-1/2}}{(2j)!} e^{\frac{\pi^2N}{12z}} P_r^{(2j-1)}\rb{\bm{u_{[1]}},u_\beta+\kappa\sqrt{z}} + O\Brb{e^{\frac{\pi^2N}{12\varepsilon}} \varepsilon^{\rb{\lambda+1/2}(3r+2R+2)+R}}.
\end{multline}

Next we consider the sum over the other variables. Let $1\le \gamma \le N$, $\gamma \ne \alpha$, and consider the summation over $u_\gamma$. To emphasise the $\gamma$-th and the $\alpha$-th entries of $\bm u$, we write $\bm{u_{[2]}}$ to denote the remaining $N-2$ components of $\bm u$, and write $\bm u = (\bm{u_{[2]}}, u_\gamma, u_\alpha)$. Again, this does not say that $u_\gamma, u_\alpha$ are the final components of $\bm u$. We also write $\bm{u_{[1]}} = (\bm{u_{[2]}}, u_\gamma)$. Splitting off the sum over $u_\gamma$ and $u_\alpha$, we write
\[
\sum_{\bm u \in \mc T_{\bm\ell} \cap \mc U_{\varepsilon,\lambda}} e^{\frac{\pi^2N}{12z}} P_r(\bm u) = \sum_{\substack{\bm{u_{[2]}} \in \C^{N-2} \\ \exists u_\gamma, u_\alpha\in\C, \; (\bm{u_{[2]}},u_\gamma,u_\alpha) \in \mc T_{\bm\ell} \cap \mc U_{\varepsilon,\lambda}}} \sum_{\substack{u_\gamma \in \mf u_\gamma, \; u_\alpha \in \mf u(\bm{u_{[2]}}, u_\gamma) \\ (\bm{u_{[2]}}, u_\gamma, u_\alpha) \in \mc U_{\varepsilon,\lambda}}} e^{\frac{\pi^2N}{12z}} P_r(\bm{u_{[2]}},u_\gamma,u_\alpha),
\]
where $\mf u_\gamma := \cbm{-\log(2)/\sqrt{z} + t\sqrt{z}}{t\in \ell_\gamma + N\N_0}$. Analogous to the summation over $u_\alpha$, we may use \Cref{lem:out_of_range} to extend the inner sum by removing the condition $(\bm{u_{[2]}},u_\gamma,u_\alpha) \in \mc U_{\varepsilon,\lambda}$, introducing an error of size $O(e^{\pi^2N/12\varepsilon} \varepsilon^L)$ for all $L\in\N$, which is negligible. So we may consider instead the sum
\[
\sum_{\substack{u_\gamma \in \mf u_\gamma \\ u_\alpha \in \mf u_\alpha (\bm{u_{[2]}}, u_\gamma)}} e^{\frac{\pi^2N}{12z}}  P_r(\bm{u_{[2]}}, u_\gamma, u_\alpha) = \sum_{u_\gamma \in \mf u_\gamma} F_r(\bm{u_{[2]}}, u_\gamma), \quad F_r (\bm{u_{[1]}}) := \sum_{u_\alpha\in\mf u(\bm{u_{[1]}})} e^{\frac{\pi^2N}{12z}} P_r(\bm{u_{[1]}},u_\alpha).
\]

Again, we apply \Cref{prp:EMF}, and write
\begin{multline}\label{eq:P_second_sum_EMF}
\sum_{u_\gamma\in\mf u_\gamma} F_r(\bm{u_{[2]}}, u_\gamma) = \frac{1}{N\sqrt{z}} \int_{-\log(2)/\sqrt{z}+\ell_\gamma\sqrt{z}}^{-\log(2)/\sqrt{z}+\infty\sqrt{z}} F_r(\bm{u_{[2]}}, u_\gamma) du_\gamma\\
+ \frac 12 F_r\rb{\bm{u_{[2]}},-\tfrac{\log(2)}{\sqrt{z}}+\ell_\gamma\sqrt{z}} - \sum_{j=1}^R \frac{B_{2j} N^{2j-1} z^{j-1/2}}{(2j)!} F_r^{(2j-1)} \rb{\bm{u_{[2]}},-\tfrac{\log(2)}{\sqrt{z}}+\ell_\gamma\sqrt{z}}\\
+ O(1) z^R  \int_{-\log(2)/\sqrt{z}+\ell_\gamma\sqrt{z}}^{-\log(2)/\sqrt{z}+\infty\sqrt{z}} \vb{F_r^{(2R+1)}(\bm{u_{[2]}}, u_\gamma)} du_\gamma,
\end{multline}
where the derivative is taken with respect to $u_\gamma$. Thanks to the exponential decay of $e^{-\bm u^T\bm u}$ in the cone $y\ll \varepsilon^{1+\lambda+\delta}$, we have, as $z\to 0$ for all $j\in\N_0$ and $L\in\N$,
\begin{equation}\label{eq:P_second_sum_small}
\hspace{-0.1cm} F_r^{(j)}\rb{\bm{u_{[2]}},-\tfrac{\log(2)}{\sqrt{z}}+\ell_\gamma\sqrt{z}} = O\Big(e^{\frac{\pi^2N}{12\varepsilon}} \varepsilon^L\Big),\  \int_{-\log(2)/\sqrt{z}-\infty\sqrt{z}}^{-\log(2)/\sqrt{z}-\ell_\gamma\sqrt{z}} F_r(\bm{u_{[2]}}, u_\gamma)du_\gamma = O\Big(e^{\frac{\pi^2N}{12\varepsilon}} \varepsilon^L\Big).
\end{equation}
Therefore, the lower boundary of the first integral in \eqref{eq:P_second_sum_EMF} can be extended to $-\log(2)/\sqrt{z}-\infty\sqrt{z}$, and all the terms except the main term can be ignored, introducing an error term of size $O(e^{\pi^2N/12\varepsilon} \varepsilon^L)$ for all $L\in\N$. Meanwhile, the error term in \eqref{eq:P_second_sum_EMF} has size
\begin{equation}\label{eq:P_second_sum_error_bound}
z^R \int_{-\log(2)/\sqrt{z}+\ell_\gamma\sqrt{z}}^{-\log(2)/\sqrt{z}+\infty\sqrt{z}} \vb{F_r^{(2R+1)}(\bm{u_{[2]}}, u_\gamma)} du_\gamma = O\Brb{e^{\frac{\pi^2N}{12\varepsilon}} \varepsilon^{\rb{\lambda+1/2}(3r+2R+2)+R+\lambda}}.
\end{equation}
By taking $R$ sufficiently large, it follows from \eqref{eq:P_second_sum_EMF}, \eqref{eq:P_second_sum_small}, and \eqref{eq:P_second_sum_error_bound} that
\[
\sum_{u_\gamma\in\mf u_\gamma} F_r(\bm{u_{[2]}}, u_\gamma) = \frac{1}{N\sqrt{z}} \int_{-\log(2)/\sqrt{z}+\R\sqrt{z}} F_r(\bm{u_{[2]}}, u_\gamma)du_\gamma + O\Brb{e^{\frac{\pi^2N}{12\varepsilon}} \varepsilon^L}
\]
for all $L\in\N$. Using the same argument, we sum over the other coordinates, and obtain that
\[
\sum_{\bm u \in \mc T_{\bm\ell} \cap \mc U_{\varepsilon,\lambda}} e^{\frac{\pi^2N}{12z}} P_r(\bm u) = \frac{z^{(1-N)/2}}{N^{N-1}} \int_{\rb{-\log(2)/\sqrt{z}+\R\sqrt{z}}^{N-1}} F_r(\bm{u_{[1]}}) d\bm{u_{[1]}} + O\Brb{e^{\frac{\pi^2N}{12\varepsilon}} \varepsilon^L}
\]
for all $L\in\N$. Applying a mixed version of \Cref{lem:out_of_range} (see the remark below \Cref{lem:out_of_range}), we may restrict the integral above to the set (see \eqref{eq:N_def} for the definition of $\bm\mu$)
\[
\mc U_{\bm{[1]}} := \cbm{\bm u_{[1]} \in \rb{-\tfrac{\log(2)}{\sqrt{z}}+\R\sqrt{z}}^{N-1}}{\vb{\bm{\mu_{[1]}}}\le \varepsilon^\lambda},
\]
again with an error of size $O(e^{\pi^2N/12\varepsilon} \varepsilon^L)$ for all $L\in\N$, which is negligible. So we have
\[
\sum_{\bm u \in \mc T_{\bm\ell} \cap \mc U_{\varepsilon,\lambda}} e^{\frac{\pi^2N}{12z}}  P_r(\bm u) = \frac{z^{(1-N)/2}}{N^{N-1}} \int_{\mc U_{\bm{[1]}}} F_r(\bm{u_{[1]}}) d\bm{u_{[1]}} + O\Brb{e^{\frac{\pi^2N}{12\varepsilon}}\varepsilon^L}
\]
for all $L\in\N$. Applying the asymptotic formula \eqref{eq:P_sum_EMF_refined} gives
\begin{align}\nonumber
\sum_{\bm u \in \mc T_{\bm\ell} \cap \mc U_{\varepsilon,\lambda}} e^{\frac{\pi^2N}{12z}} P_r(\bm u) &= \frac{z^{-N/2}}{N^N} \int_{\mc U_{\bm{[1]}}} \int_{u_\beta+\kappa\sqrt{z}}^{u_\beta+\infty\sqrt{z}} e^{\frac{\pi^2N}{12z}} P_r(\bm{u_{[1]}},u_\alpha) du_\alpha d\bm{u_{[1]}}\\
&\hspace{0.5cm}+ \frac{z^{(1-N)/2}}{2N^{N-1}} \int_{\mc U_{\bm{[1]}}} e^{\frac{\pi^2N}{12z}} P_r(\bm{u_{[1]}},u_\beta+\kappa\sqrt{z}) d\bm{u_{[1]}}\nonumber\\
&\hspace{0.5cm}-\sum_{j=1}^R \frac{B_{2j} N^{2j-N} z^{j-N/2}}{(2j)!} \int_{\mc U_{\bm{[1]}}} e^{\frac{\pi^2N}{12z}} P_r^{(2j-1)}(\bm{u_{[1]}},u_\beta+\kappa\sqrt{z}) d\bm{u_{[1]}}\nonumber\\
&\hspace{0.5cm}+ O\Brb{e^{\frac{\pi^2N}{12\varepsilon}} \varepsilon^{\rb{\lambda+1/2}(3r+2R+2)+R+(1-N)/2}} \int_{\mc U_{\bm{[1]}}} d\bm{u_{[1]}}. \label{eq:P_all_sum_asymp1}
\end{align}
Since $\mc U_{\bm{[1]}}$ has measure $\ll\varepsilon^{(N-1)\rb{\lambda+1/2}}$, it follows that the error term in \eqref{eq:P_all_sum_asymp1} has size
\[
O\Brb{e^{\frac{\pi^2N}{12\varepsilon}} \varepsilon^{\rb{\lambda+1/2}(3r+2R+N+1)+R+(1-N)/2}}.
\]
For other terms in \eqref{eq:P_all_sum_asymp1}, we use \Cref{lem:out_of_range} to extend the integrals to $-\log(2)/\sqrt{z} + \R\sqrt{z}$, again introducing a negligible error. Moreover, since $P_r^{(j)}(\bm{u_{[1]}},u_\alpha)$ and all of its derivatives (with respect to $u_\gamma$) are holomorphic and have rapid decay, we may shift the path of integration and write
\begin{align}\nonumber
\sum_{\bm u \in \mc T_{\bm\ell} \cap \mc U_{\varepsilon,\lambda}} e^{\frac{\pi^2N}{12z}} P_r(\bm u) &= \frac{z^{-N/2}}{N^N} \int_{\R^{N-1}} \int_{u_\beta+\kappa\sqrt{z}}^{\infty} e^{\frac{\pi^2N}{12z}} P_r(\bm{u_{[1]}},u_\alpha) du_\alpha d\bm{u_{[1]}}\\
&\hspace{0.5cm}+ \frac{z^{(1-N)2}}{2N^{N-1}} \int_{\R^{N-1}} e^{\frac{\pi^2N}{12z}} P_r(\bm{u_{[1]}}, u_\beta+\kappa\sqrt{z}) d\bm{u_{[1]}}\nonumber\\
&\hspace{0.5cm}-\sum_{j=1}^R \frac{B_{2j} N^{2j-N} z^{j-N/2}}{(2j)!} \int_{\R^{N-1}} e^{\frac{\pi^2N}{12z}} P_r^{(2j-1)}(\bm{u_{[1]}},u_\beta+\kappa\sqrt{z}) d\bm{u_{[1]}}\nonumber\\
&\hspace{0.5cm}+ O\Brb{e^{\frac{\pi^2N}{12\varepsilon}} \varepsilon^{\rb{\lambda+1/2}(3r+2R+N+1)+R+(1-N)/2}}.\label{eq:P_all_sum_asymp2}
\end{align}
To evaluate \eqref{eq:P_all_sum_asymp2}, we have to treat the integrals of the form 
\[
\int_{\R^{N-1}} e^{\frac{\pi^2N}{12z}} P_r^{(j)} (\bm{u_{[1]}},u_\beta+\kappa\sqrt{z}) d\bm{u_{[1]}}.
\]
Using \eqref{eq:C_derivative_def}, it follows that
\[
\int_{\R^{N-1}} e^{\frac{\pi^2N}{12z}} P_r^{(j)} \rb{\bm{u_{[1]}},u_\beta+\kappa\sqrt{z}} d\bm{u_{[1]}} = \int_{\R^{N-1}} e^{\frac{\pi^2N}{12z}} C_{r,j}\rb{\bm{u_{[1]}},u_\beta+\kappa\sqrt{z}} e^{-\bm{u_{[1]}}^T\bm{u_{[1]}} - (u_\beta+\kappa\sqrt{z})^2} d\bm{u_{[1]}}.
\]
Define $C^*_{r,j}(\bm{u_{[2]}},u_\beta,u_\alpha) := C_{r,j}(\bm{u_{[2]}}, u_\beta-u_\alpha/2, u_\beta+u_\alpha/2)$, where $\bm{u_{[2]}}$ denotes the remaining $N-2$ entries of $\bm u$. Then we have 
\[
C^*_{r,j}\rb{\bm{u_{[2]}}, u_\beta+\tfrac{\kappa\sqrt{z}}2, \kappa\sqrt{z}} = C_{r,j}(\bm{u_{[2]}},u_\beta,u_\beta+\kappa\sqrt{z}).
\]
We may then write (after a change of variables $u_\beta+\kappa\sqrt{z}/2 \mapsto u_\beta$)
\[
\int_{\R^{N-1}} e^{\frac{\pi^2N}{12z}} P_r^{(j)} (\bm{u_{[1]}},u_\beta+\kappa\sqrt{z}) d\bm{u_{[1]}} = e^{\frac{\pi^2N}{12z}-\frac{\kappa^2z}2} \int_{\R^{N-1}} C^*_{r,j} \rb{\bm{u_{[1]}},\kappa\sqrt{z}} e^{-\bm{u_{[1]}}^T\bm{u_{[1]}} - u_\beta^2} d\bm{u_{[1]}},
\]
with the inner integral being a linear combination of (multivariate) Gaussian integrals, with some power of $\kappa\sqrt{z}$. Noting that $C^*_{r,j}(\bm u)$ also has degree at most $3r+j$, we may write
\begin{multline*}
\frac {z^{(1-N)/2}}{2N^{N-1}} \int_{\R^{N-1}} e^{\frac{\pi^2N}{12z}} P_r(\bm{u_{[1]}},u_\beta+\kappa\sqrt{z}) d\bm{u_{[1]}}\\
-\sum_{j=1}^R \frac{B_{2j} N^{2j-N} z^{j-N/2}}{(2j)!} \int_{\R^{N-1}} \hspace{-0.9em} e^{\frac{\pi^2N}{12z}} P_r^{(2j-1)}(\bm{u_{[1]}},u_\beta+\kappa\sqrt{z}) d\bm{u_{[1]}} = e^{\frac{\pi^2N}{12z}-\frac{\kappa^2z}2} \hspace{-0.6em} \sum_{j=0}^{3r+2R-1} \hspace{-0.6em} V_{r,j}(\kappa) z^{(j+1-N)/2},
\end{multline*}
where $V_{r,j}$ are polynomials of degree $\le j$. Hence we may write
\begin{multline}\label{eq:P_pre_asymp}
\sum_{\bm u \in \mc T_{\bm\ell} \cap \mc U_{\varepsilon,\lambda}} e^{\frac{\pi^2N}{12z}} P_r(\bm u) = \frac{z^{-N/2}}{N^N} \int_{\R^{N-1}} \int_{u_\beta+\kappa\sqrt{z}}^\infty e^{\frac{\pi^2N}{12z}} P_r(\bm{u_{[1]}},u_\alpha) du_\alpha d\bm{u_{[1]}}\\
+ e^{\frac{\pi^2N}{12z}-\frac{\kappa^2z}2} \sum_{j=0}^{3r+2R-1} V_{r,j}(\kappa) z^{(j+1-N)/2} + O\Brb{e^{\frac{\pi^2N}{12\varepsilon}} \varepsilon^{\rb{\lambda+1/2}(3r+2R+N+1)+R+(1-N)/2}}.
\end{multline}
For $L\in\N$, we take $R = R_1(r,L) = \big\lceil\frac{L/2-(\lambda+1/2)(3r+N+1)}{2\lambda+2}\big\rceil$, then the error term in \eqref{eq:P_pre_asymp} has size $O(e^{\pi^2N/12\varepsilon} \varepsilon^{(L+1-N)/2})$, proving the lemma.
\end{proof}

\begin{proof}[Proof of \Cref{prp:g1_asymp}]
We estimate the following expression, occurring in \Cref{lem:g1_sum_asymp}:
\[
\rb{\frac z\pi}^{N/2} \frac{1}{\sqrt{2}} \sum_{r=0}^{R-1} \sum_{\bm u \in \mc T_{\bm\ell} \cap \mc U_{\varepsilon,\lambda}} e^{\frac{\pi^2N}{12z}} P_r(\bm u) z^{r/2} = \frac{1}{\sqrt{2}\pi^{N/2}} \sum_{r=0}^{R-1} z^{(N+r)/2} \sum_{\bm u \in \mc T_{\bm\ell} \cap \mc U_{\varepsilon,\lambda}} e^{\frac{\pi^2N}{12z}} P_r(\bm u).
\]
By \Cref{lem:P_asymp}, we have
\begin{multline*}
z^{(N+r)/2} \sum_{\bm u \in \mc T_{\bm\ell}\cap \mc U_{\varepsilon,\lambda}} e^{\frac{\pi^2N}{12z}} P_r(\bm u) = \frac{z^{r/2}}{N^N} \int_{\R^{N-1}} \int_{u_\beta+\kappa\sqrt{z}}^\infty e^{\frac{\pi^2N}{12z}} P_r(\bm{u_{[1]}},u_\alpha)du_\alpha d\bm{u_{[1]}}\\
+ e^{\frac{\pi^2N}{12z}-\frac{\kappa^2z}2} \sum_{j=0}^{3r+2R_1(r,R-r)-1} V_{r,j}(\kappa) z^{(r+j+1)/2} + O\Brb{e^{\frac{\pi^2N}{12\varepsilon}}\varepsilon^{(R+1)/2}}.
\end{multline*}
Summing over $r$ gives
\begin{multline*}
\sum_{r=0}^{R-1} z^{(N+r)/2} \sum_{\bm u \in \mc T_{\bm\ell}\cap \mc U_{\varepsilon,\lambda}} e^{\frac{\pi^2N}{12z}} P_r(\bm u) = \sum_{r=0}^{R-1} \frac{z^{r/2}}{N^N} \int_{\R^{N-1}} \int_{u_\beta+\kappa\sqrt{z}}^\infty e^{\frac{\pi^2N}{12z}} P_r(\bm{u_{[1]}},u_\alpha)du_\alpha d\bm{u_{[1]}}\\
+ e^{\frac{\pi^2N}{12z}-\frac{\kappa^2z}2} \sum_{r=0}^{R-1} \sum_{j=0}^{3r+2R_1(r,R-r)-1} V_{r,j}(\kappa) z^{(r+j+1)/2} + O\Brb{e^{\frac{\pi^2N}{12\varepsilon}}\varepsilon^{(R+1)/2}}.
\end{multline*}
Combining the terms with same power in $z$, we get 
\begin{align*}
&\frac{1}{\sqrt{2}\pi^{N/2}} \sum_{r=0}^{R-1} z^{(N+r)/2} \sum_{\bm u \in \mc T_{\alpha,\beta;N;c;\bm\ell}\cap \mc U_{\varepsilon,\lambda}} e^{\frac{\pi^2N}{12z}} P_r(\bm u)\\
&= \frac{1}{\sqrt{2}\pi^{N/2} N^N} \sum_{r=0}^{R-1} z^{r/2} \int_{\R^{N-1}} \int_{u_\beta+\kappa\sqrt{z}}^\infty e^{\frac{\pi^2N}{12z}} P_r(\bm{u_{[1]}},u_\alpha)du_\alpha d\bm{u_{[1]}}\\
&\hspace{1cm}+ e^{\frac{\pi^2N}{12z}-\frac{\kappa^2z}2} \sum_{r=0}^{S(R)-1} W_r(\kappa) z^{(r+1)/2} + O\Brb{e^{\frac{\pi^2N}{12\varepsilon}}\varepsilon^{(R+1)/2}},
\end{align*}
where $S(R) := 4(R-1)+2R_1(R-1,1) = 4(R-1) + 2\big\lceil\frac{1/2 - (\lambda+1/2)(3R+N-2)}{2\lambda+2}\big\rceil$, and $W_r$ are polynomials of degree $\le r$. Since $-\frac 23<\lambda < -\frac 12$, we see that the error term $O(e^{\pi^2N/12\varepsilon} \varepsilon^{N\rb{\lambda+1/2} + 3R\rb{\lambda+2/3}})$ from \Cref{lem:g1_sum_asymp} dominates, and the proposition is established.
\end{proof}

Now we show that $g_{\bm\ell}^{[1]}(z)$ is actually small for larger values of $\vb{y}$. We require the following technical lemma about $\Lambda(y)$ (see \eqref{eq:Lambda_def}), from \cite{BMRS24AP}.

\begin{lem}[{\cite[Lemma 5.6]{BMRS24AP}}]\label{lem:sy}
Let $s(y) := \re\big(\frac{\Lambda(y)}{1+iy} - \frac{\pi^2N}{12}\big)$. Then we have the following:
\begin{enumerate}
\item We have $s(y)\le 0$ for all $y\in\R$, and the equality holds if and only if $y=0$.
\item For any $y_0>0$, there exists $d>0$ such that $s(y)<-d$ for $\vb{y}\ge y_0$.
\item We have, as $y\to 0$,
\[
s(y) = N\rb{\log(2)^2-\frac{\pi^2}{12}}y^2 + O\rb{y^4}.
\]
\end{enumerate}
\end{lem}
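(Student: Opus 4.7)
The strategy is to reduce every assertion to a statement about the single holomorphic function
\[
\psi(z) := \frac{1}{z}\rb{\frac{\pi^2}{6} - \frac{\log(2)^2 z^2}{2} - \Li_2(2^{-z})}, \qquad \re(z)>0,
\]
by observing that $s(y) = N\bigl(\re\psi(1+iy) - \tfrac{\pi^2}{12}\bigr)$. Since $N$ enters as a positive scalar, all three parts follow from the corresponding statements about $h(y) := \re\psi(1+iy) - \tfrac{\pi^2}{12}$. Using the classical evaluation $\Li_2(\tfrac12) = \tfrac{\pi^2}{12} - \tfrac{\log(2)^2}{2}$, a direct calculation yields $\psi(1) = \tfrac{\pi^2}{12}$, so $h(0) = 0$ and in particular $s(0)=0$.

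Part (3) is then a routine Taylor expansion. Repeated differentiation via $\frac{d}{dz}\Li_2(2^{-z}) = \log(2)\log(1-2^{-z})$ and $\frac{d^2}{dz^2}\Li_2(2^{-z}) = \frac{\log(2)^2\,2^{-z}}{1-2^{-z}}$ gives $\psi'(1) = -\tfrac{\pi^2}{12}$ and $\psi''(1) = \tfrac{\pi^2}{6} - 2\log(2)^2$, both real. Consequently, along the vertical line $z = 1+iy$,
\[
\tfrac{d}{dy}\re\psi(1+iy)\big|_{y=0} = -\im\psi'(1) = 0, \qquad \tfrac{d^2}{dy^2}\re\psi(1+iy)\big|_{y=0} = -\re\psi''(1) = 2\log(2)^2 - \tfrac{\pi^2}{6},
\]
and dividing by $2$, multiplying by $N$, and combining with $s(0)=0$ yields exactly (3). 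Note that $2\log(2)^2 - \tfrac{\pi^2}{6} < 0$, so $y = 0$ is already a strict local maximum of $s$, consistent with (1).

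For part (1), I would clear the denominator and rewrite $(1+y^2)\,s(y)/N = -T(y)$ with
\[
T(y) := \frac{\pi^2(y^2-1)}{12} + \frac{\log(2)^2(1+y^2)}{2} + \re\bigl[(1-iy)\Li_2(2^{-(1+iy)})\bigr],
\]
where expanding the dilogarithm shows that the last term equals $\sum_{n\ge 1} \frac{2^{-n}(\cos(ny\log 2) - y\sin(ny\log 2))}{n^2}$. The task reduces to proving $T(y) > 0$ for $y \ne 0$. From Step 2 one already has $T(0) = T'(0) = 0$ and $T''(0) = \tfrac{\pi^2}{6} - 2\log(2)^2 > 0$ (strict local minimum), while the oscillatory sum is bounded by $O(\sqrt{1+y^2})$, so coercivity gives $T(y) \sim \bigl(\tfrac{\pi^2}{12}+\tfrac{\log(2)^2}{2}\bigr)y^2 \to +\infty$ as $|y|\to\infty$. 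The main obstacle --- and the hardest step --- is to rule out spurious zeros at intermediate $y$; I expect to verify this by showing the (odd) derivative $T'(y)$ has $y=0$ as its only real zero, e.g.\ by separating the polynomial part $y\bigl(\tfrac{\pi^2}{6}+\log(2)^2\bigr)$ from the Fourier-type tail and bounding the latter against the former. Once $y=0$ is the unique critical point, the strict local minimum is necessarily global, so $T>0$ off the origin.

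Part (2) is then a compactness consequence of (1). On any compact annulus $\{y_0 \le |y| \le Y\}$, the continuous function $s$ attains its maximum, which by (1) is some $-d_1 < 0$. For $|y|\ge Y$ with $Y$ large, the dilogarithm in $\Lambda(y)$ remains bounded while the quadratic $-\tfrac{\log(2)^2(1+iy)^2}{2}$ divided by $1+iy$ contributes $\re\bigl(-\tfrac{\log(2)^2(1+iy)}{2}\bigr) = -\tfrac{\log(2)^2}{2}$; accounting for the $-\tfrac{\pi^2}{12}$ shift yields $s(y) \to -N\bigl(\tfrac{\log(2)^2}{2} + \tfrac{\pi^2}{12}\bigr) < 0$ as $|y|\to\infty$, so $s(y) < -d_2$ for $|y|\ge Y$. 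Setting $d := \tfrac12\min(d_1,d_2)$ establishes (2).
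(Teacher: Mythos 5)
The paper does not prove this lemma; it is quoted from \cite[Lemma~5.6]{BMRS24AP}, so there is no internal argument to compare against, and your proposal must stand on its own.

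Your setup is correct: with $\psi(z) := \frac1z\big(\frac{\pi^2}{6}-\frac{\log(2)^2z^2}{2}-\Li_2(2^{-z})\big)$ one has $s(y)=N\big(\re\psi(1+iy)-\frac{\pi^2}{12}\big)$, and your evaluations $\psi(1)=\frac{\pi^2}{12}$, $\psi'(1)=-\frac{\pi^2}{12}$, $\psi''(1)=\frac{\pi^2}{6}-2\log(2)^2$ check out, as does the identity $(1+y^2)s(y)/N=-T(y)$. For part~(3), one additional sentence is needed: the second-order expansion you derive only gives an $O(y^3)$ error; to get the stated $O(y^4)$ you must observe that $s$ is even (since $\Lambda(-y)=\overline{\Lambda(y)}$ and $\tfrac1{1-iy}=\overline{\tfrac1{1+iy}}$), so the cubic Taylor coefficient vanishes. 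Part~(2) follows from part~(1) together with your large-$|y|$ limit exactly as you describe.

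The genuine gap is in part~(1), and you flag it yourself as ``the hardest step'' without carrying it out. You propose to show $T'(y)\ne 0$ for $y\ne 0$, but no estimate is given, and the assertion is not immediate. Differentiating the Fourier tail produces a term $\tfrac{y\log 2}{2}\log\big(\tfrac54-\cos(y\log 2)\big)$ that can be as negative as $-y\log(2)^2$, plus a bounded remainder of magnitude up to roughly $\Li_2(\tfrac12)+\tfrac{\pi\log 2}{6}\approx 0.94$; compared against the polynomial contribution $y\big(\tfrac{\pi^2}{6}+\log(2)^2\big)$, this only yields $T'(y)\ge \tfrac{\pi^2}{6}y-O(1)$, which is positive once $y$ is somewhat larger than $\tfrac12$ but is useless for moderate $y$, while near $y=0$ the local expansion $T'(y)\approx\big(\tfrac{\pi^2}{6}-2\log(2)^2\big)y$ takes over. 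Without an explicit treatment of the intermediate range there is no proof of the key inequality $T(y)>0$ for $y\ne0$, and hence neither (1) nor the derived (2) is established. The strategy is plausible and likely closable with a finer splitting of the Fourier tail or a short numerical verification on a compact interval, but as written the proposal is incomplete precisely where the difficulty lies.
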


\begin{prp}\label{prp:g1_wide_range}
Suppose $\varepsilon^{1/2-\delta} \ll y \ll \varepsilon^{-1-3\lambda/2+\delta}$ for some $\delta>0$. Then as $z\to 0$ we have
\[
g_{\bm\ell}^{[1]}(z) \ll e^{\frac{\pi^2N}{12\varepsilon}} \varepsilon^L \quad \fa L\in\N.
\]
\end{prp}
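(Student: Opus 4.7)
The plan is to apply \Cref{prp:wide_asymp}, which is valid throughout the hypothesised range $y \ll \varepsilon^{-1-3\lambda/2+\delta}$, and exploit the fact that the dominant exponential factor $e^{\Lambda(y)/z}$ supplies decay (relative to the reference size $e^{\pi^2N/(12\varepsilon)}$) that overwhelms all remaining factors. Applied with $R$ chosen large enough that the error $O(\varepsilon^{2R\delta_2})$ is negligible, the proposition bounds each summand in modulus by a product of $\vb{e^{\Lambda(y)/z}} = e^{\pi^2N/(12\varepsilon)}e^{s(y)/\varepsilon}$, the prefactor $\vb{(z/2\pi)^{N/2}}$ together with bounded constants, the Gaussian-like factor $\vb{e^{-\frac12 \bm v^T\mc A\bm v + \frac{1}{\sqrt z}(-\log(2)(1+iy)-\log(1-2^{-(1+iy)}))\sum_j v_j}}$, and the polynomials $D_r(\bm v,y)$.

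The crucial step is to exploit the lower bound $y \gg \varepsilon^{1/2-\delta}$ via \Cref{lem:sy}. If $\vb{y}\ge y_0$ for some fixed $y_0>0$, then part (2) of the lemma gives $s(y) < -d$ for some $d>0$, so $e^{s(y)/\varepsilon} \le e^{-d/\varepsilon}$ decays super-exponentially. Otherwise $\varepsilon^{1/2-\delta} \ll y < y_0$, and part (3) together with $\log(2)^2 - \pi^2/12 < 0$ gives $s(y) \le -C_1 y^2$, so that $e^{s(y)/\varepsilon} \le e^{-C_1\varepsilon^{-2\delta}}$ decays faster than every power of $\varepsilon$.

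To dispose of the remaining factors, substitute $\bm v = \sqrt z\,\bm\mu$ with $\vb{\bm\mu}\le \varepsilon^\lambda$. The polynomials $D_r(\bm v,y)$, the prefactor $\vb{(z/2\pi)^{N/2}} \ll \varepsilon^{-3\lambda N/4 + N\delta/2}$, and the number of lattice points $\#(\mc S_{\bm\ell} \cap \mc N_{\varepsilon,\lambda}) \ll \varepsilon^{\lambda N}$ are all polynomial in $1/\varepsilon$. For the Gaussian-like factor, the real part of the exponent works out to $-\tfrac12\re(z\mc A)\vb{\bm\mu}^2 + c(y)\sum_j\mu_j$, where $c(y) := -\log(2\vb{1-2^{-(1+iy)}})$ vanishes to second order at $y=0$ and is bounded for $\vb{y}$ bounded. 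Using $\lambda > -\tfrac23$ and the hypothesised range of $y$, one checks that this exponent is at worst $O(\varepsilon^{-c})$ for some $c<1$, so the factor grows at most sub-exponentially in $1/\varepsilon$.

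Putting everything together, $\vb{g_{\bm\ell}^{[1]}(z)} \ll e^{\pi^2N/(12\varepsilon)} \cdot e^{s(y)/\varepsilon} \cdot \varepsilon^{-O(1)} \cdot e^{O(\varepsilon^{-c})}$, and the super-polynomial (or super-exponential) decay of $e^{s(y)/\varepsilon}$ swallows all polynomial and sub-exponential growth, yielding the desired bound $\ll e^{\pi^2N/(12\varepsilon)}\varepsilon^L$ for every $L\in\N$. The main obstacle is the Gaussian-like factor: since $\mc A$ is complex-valued, the real part of the quadratic form $\bm v^T\mc A\bm v$ need not be non-negative for the $y$-values of interest, and one must verify that its potential growth stays strictly sub-exponential in $1/\varepsilon$. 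This is precisely where the constraints $-\tfrac23 < \lambda < -\tfrac12$ and $y \ll \varepsilon^{-1-3\lambda/2+\delta}$ are used in a decisive way.
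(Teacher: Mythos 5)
Your overall strategy matches the paper's: apply \Cref{prp:wide_asymp}, isolate the dominant exponent, control $s(y)$ via \Cref{lem:sy}, and verify that the remaining exponents are subordinate. However, there is a genuine gap in the step you yourself flag as ``the main obstacle.'' You assert that the exponent coming from the quadratic and linear pieces of the Gaussian-like factor ``is at worst $O(\varepsilon^{-c})$ for some $c<1$'' and that ``the super-polynomial (or super-exponential) decay of $e^{s(y)/\varepsilon}$ swallows all polynomial and sub-exponential growth.'' That last clause is false in general: in the range $\varepsilon^{1/2-\delta}\ll y\ll 1$, \Cref{lem:sy}(3) gives only $s(y)/\varepsilon\gg y^2/\varepsilon\gg\varepsilon^{-2\delta}$, and $e^{-\varepsilon^{-2\delta}}$ does \emph{not} beat $e^{\varepsilon^{-c}}$ when $2\delta<c$. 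Since $\delta$ in the hypothesis (and in the application, where $Y=\eta^{1/2-\delta}$ is taken with $\delta$ small) can be arbitrarily small, this dominance fails exactly in the regime you need.

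Concretely, for $y\ll 1$ one has $z\asymp\varepsilon$ and $|\bm v|\ll\varepsilon^{1/2+\lambda}$, so the magnitude of $\tfrac12\bm v^T\mc A\bm v$ is only $\ll\varepsilon^{1+2\lambda}$, where $1+2\lambda\in(-1/3,0)$. For $\delta<-\tfrac12-\lambda$ this can dwarf $\varepsilon^{-2\delta}$, so a magnitude bound alone cannot close the argument. The paper resolves this by splitting the range: for $\varepsilon^{1/2-\delta}\ll y\ll\varepsilon^{1/4}$ one does not need a size bound at all, because $\re(z\mc A)>0$ for $y$ this small and hence $\re(-\tfrac12\bm v^T\mc A\bm v)=-\tfrac12\re(z\mc A)\vb{\bm\mu}^2\le 0$ contributes additional decay; only for $\varepsilon^{1/4}\ll y\ll 1$, where $s(y)/\varepsilon\gg\varepsilon^{-1/2}$ genuinely dominates $\varepsilon^{1+2\lambda}$, is the crude magnitude bound enough. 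You correctly identify this sign question as the crux, but you leave it as an assertion rather than carrying out the sub-split and the sign check, and the claim you do make (super-polynomial decay beats sub-exponential growth) is not true as stated.
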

\begin{proof}
It suffices to show that 
\begin{equation}\label{eq:g1_exponent_small}
e^{-\frac{\pi^2N}{12\varepsilon}} g_{\bm\ell}^{[1]}(z) \ll \varepsilon^L \quad \fa L\in\N.
\end{equation}
From \Cref{prp:wide_asymp}, if $y\ll \varepsilon^{-1-3\lambda/2+\delta}$ we have for $R\in\N$ as $z\to 0$ an expansion
\begin{multline}\label{eq:g1_wide_range_asymp}
e^{-\frac{\pi^2N}{12\varepsilon}} g_{\bm\ell}^{[1]}(z) = 2^{-(1+iy)/2} \rb{1-2^{-1(1+iy)}}^{-N/2} \rb{\frac{z}{2\pi}}^{N/2} e^{\frac 1\varepsilon (\frac{\Lambda(y)}{1+iy} - \frac{\pi^2N}{12})}\\
\hspace{-0.1cm}\times\hspace{-0.5em} \sum_{\bm m\in\mc S_{\bm\ell} \cap \mc N_{\varepsilon,\lambda}} \hspace{-0.5em} e^{-\frac12\bm v^T\mc A\bm v+\frac{1}{\sqrt z}\rb{-\log(2)(1+iy)-\log(1-2^{-(1+iy)})}\sum_{j=1}^N v_j} \rb{\sum_{r=0}^{R-1} D_r(\bm v,y)z^{r/2} + O\rb{\varepsilon^{2R\delta_2}}}. \hspace{-0.1cm}
\end{multline}
To prove \eqref{eq:g1_exponent_small}, it suffices to show that the exponent in \eqref{eq:g1_wide_range_asymp}
\[
\frac 1\varepsilon \rb{\frac{\Lambda(y)}{1+iy} - \frac{\pi^2N}{12}} - \frac 12 \bm v^T\mc A\bm v + \frac{1}{\sqrt{z}} \rb{-\log(2)(1+iy) - \log\rb{1-2^{-(1+iy)}}} \sum_{j=1}^N v_j
\]
has negative real part of size $\gg \varepsilon^{-\delta_0}$ for some $\delta_0>0$; we note that the other factors in \eqref{eq:g1_wide_range_asymp} are bounded as $z\to 0$, and the sum over $\bm m$ contains $\ll \varepsilon^{N\lambda}$ terms. By \Cref{lem:sy}(1) the real part of $\frac 1\varepsilon \re\big(\frac{\Lambda(y)}{1+iy}-\frac{\pi^2N}{12}\big) = \frac{s(y)}\varepsilon$ is negative. So it suffices to show that this exponent has sufficiently large size and dominates other exponents with positive real parts.

First suppose $\varepsilon^{1/2-\delta}\ll y \ll 1$. By \Cref{lem:sy}(3), we find that
\[
\frac{s(y)}{\varepsilon} \gg \frac{y^2}{\varepsilon} \gg \varepsilon^{-2\delta}
\]
as $\varepsilon \to 0$. Meanwhile, by computing the Taylor series expansion we have, for $y\ll 1$,
\[
\re\rb{-\log(2)(1+iy) - \log\rb{1-2^{-(1+iy)}}} \ll y^2.
\]
Since $\bm m \in\mc S_{\bm\ell} \cap \mc N_{\varepsilon,\lambda}$, we have $v_j/\sqrt{z} \ll \varepsilon^\lambda$ (see the remark below \Cref{prp:wide_asymp}), we conclude that
\[
\re\rb{\frac{1}{\sqrt{z}} \rb{-\log(2)(1+iy) - \log\rb{1-2^{-(1+iy)}}}\sum_{j=1}^N v_j} \ll \varepsilon^\lambda y^2.
\]
Next we consider the exponent $-\frac 12 \bm v^T\mc A\bm v$. For this we split into two subcases.
\begin{enumerate}
\item Suppose $\varepsilon^{1/2-\delta} \ll y \ll \varepsilon^{1/4}$. Since $v_j$ is a real multiple of $\sqrt{z}$, the condition $y\ll \varepsilon^{1/4}$ implies that $-\frac 12\bm v^T\mc A\bm v$ has negative real part as $z\to 0$.
\item Suppose $\varepsilon^{1/4} \ll y \ll 1$. In this case we have $\vb{\bm v} \ll \varepsilon^{1/2+\lambda}$; this follows from remark below \Cref{prp:wide_asymp}, using that $z\asymp \varepsilon$ (which holds for $\varepsilon^{1/4} \ll y \ll 1$). It then follows that $-\frac 12\bm v^T\mc A \bm v\ll \varepsilon^{1+2\lambda}$. 
\end{enumerate}
In either subcase, the exponent $s(y)/\varepsilon$ has size $\gg \varepsilon^{-2\delta}$ and dominates other exponents appearing in \eqref{eq:g1_wide_range_asymp} that have positive real parts, proving \eqref{eq:g1_exponent_small} for the case $\varepsilon^{1/2-\delta}\ll y \ll 1$.

Now we consider the case $1\ll y \ll \varepsilon^{-1-3\lambda/2+\delta}$. By \Cref{lem:sy}(2), we have $s(y)/\varepsilon \gg \varepsilon^{-1}$. Meanwhile, using the trivial bound
\[
\re\rb{-\log(2)(1+iy) - \log\rb{1-2^{-(1+iy)}}} \ll 1,
\]
it follows from the bound $v_j/\sqrt{z} \le \varepsilon^\lambda$ that
\[
\re\rb{\frac{1}{\sqrt{z}} \rb{-\log(2)(1+iy) - \log\rb{1-2^{-(1+iy)}}} \sum_{j=1}^N v_j} \ll \varepsilon^\lambda.
\]
Finally, we use the bound $\vb{\bm v} \ll \varepsilon^{\lambda/4+\delta_2/2}$ (see the remark below \Cref{prp:wide_asymp}) and deduce that
\[
-\frac 12\bm v^T\mc A\bm v \ll \varepsilon^{\lambda/2+\delta_2}.
\]
So \eqref{eq:g1_exponent_small} also holds for the case $1\ll y \ll \varepsilon^{-1-3\lambda/2+\delta}$. This finishes the proof of the proposition.
\end{proof}

\section{Constructing the asymptotic expansion}\label{section:construction}

In this section, we establish the existence of an asymptotic expansion for $d_{\alpha,\beta;N;c_0n^{1/4}}(n)$ in an explicit manner, using the circle method. We often make use of the following lemma, which gives asymptotic expansions for integrals that arise when we apply the circle method.
\begin{lem}\label{lem:NR}{\cite[Lemma 3.7]{NR17IP}}
Let $A\ge 0$, $B>0$, and $R\in\N$. Then as $n\to\infty$ we have
\[
\frac{1}{2\pi i}\int_{\mc C_1\rb{\frac{B}{\sqrt{n}}}} z^Ae^{\frac{B^2}{z}+nz} dz = n^{(-2A-3)/4} e^{2B\sqrt{n}} \rb{\sum_{r=0}^{R-1} \frac{T_{A,B,r}}{n^{r/2}} + O\rb{n^{-R/2}}},
\]
where $\mc C_1(\eta) := \cbm{z = \eta(1+iy)}{\vb{y}\le \theta}$ for a fixed constant $\theta > 0$, and
\[
T_{A,B,r} := \frac{(-4B)^{-r} B^{A+1/2} \Gamma\rb{A+r+\frac 32}}{2\sqrt{\pi} r! \Gamma\rb{A-r+\frac 32}}.
\]
\end{lem}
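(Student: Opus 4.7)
The plan is to apply the saddle-point method directly. The integrand $z^A e^{B^2/z + nz}$ has phase $\phi(z) = B^2/z + nz$ with a unique saddle point in the right half-plane at $z_0 = B/\sqrt{n}$, obtained from $\phi'(z) = -B^2/z^2 + n = 0$, and $\phi(z_0) = 2B\sqrt{n}$. The contour $\mc C_1(B/\sqrt{n})$ is precisely a short vertical segment through this saddle, which is the steepest-descent direction because $\phi''(z_0) = 2B^2/z_0^3 > 0$ is real positive. So the standard Laplace/saddle analysis will apply, producing an expansion whose leading factor is $e^{2B\sqrt{n}}$ times a power series in $n^{-1/2}$.

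First I would parameterise $z = (B/\sqrt{n})(1+iy)$ with $|y|\le \theta$, giving $dz = i(B/\sqrt{n})dy$ and, using the exact identity $(1+w) + (1+w)^{-1} = 2 + w^2/(1+w)$ with $w = iy$,
\[
\frac{B^2}{z} + nz = 2B\sqrt{n} + B\sqrt{n}\cdot\frac{(iy)^2}{1+iy}.
\]
This puts the integral in the form
\[
\frac{B^{A+1}}{2\pi\, n^{(A+1)/2}}\, e^{2B\sqrt{n}} \int_{-\theta}^{\theta} (1+iy)^A \exp\!\Brb{B\sqrt{n}\,\frac{(iy)^2}{1+iy}}\, dy.
\]
Next I would substitute $y = t/(B^{1/2}n^{1/4})$, absorbing one factor of $n^{-1/4}$ into the prefactor to produce the claimed $n^{-(2A+3)/4}$ with leading coefficient $B^{A+1/2}/(2\pi)$, and obtaining an integrand in $t$ of the form $(1 + it/(B^{1/2}n^{1/4}))^A \exp(-t^2/(1 + it/(B^{1/2}n^{1/4})))$. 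On the rescaled interval $|t|\le \theta B^{1/2}n^{1/4}$ the quadratic $-t^2$ dominates, so standard estimates give $\re(\phi(z)-2B\sqrt{n}) \le -c\, t^2$ for some $c>0$, allowing the range to be extended to $(-\infty,\infty)$ with an exponentially small error.

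The analytic heart of the argument is then to expand the integrand as an asymptotic series in $n^{-1/4}$. Using
\[
(1+iy)^A = \sum_{k\ge 0}\binom{A}{k}(iy)^k,\qquad \frac{(iy)^2}{1+iy} = -y^2 + iy^3 + y^4 - iy^5 + \cdots,
\]
the exponent becomes $-t^2 + P(t, n^{-1/4})$ with $P$ vanishing at $n^{-1/4}=0$. Expanding $\exp(P)$ and the binomial factor, and multiplying, produces
\[
e^{-t^2}\sum_{k\ge 0}\frac{Q_k(t)}{n^{k/4}}
\]
for explicit polynomials $Q_k(t)$. Integrating term by term against $e^{-t^2}$, all odd powers of $t$ vanish, which in turn forces all odd-$k$ contributions to cancel (since the polynomial $Q_k(t)$ arising from our structured expansion has the parity of $k$), leaving only powers $n^{-r/2}$. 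The remainder after truncation at $k = 2R-1$ is bounded using the tail estimates of the Gaussian and the uniform bound on the derivatives of the expanded function, giving the error term $O(n^{-R/2})$.

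The main obstacle will be identifying the closed form
\[
T_{A,B,r} = \frac{(-4B)^{-r}B^{A+1/2}\Gamma\bigl(A+r+\tfrac{3}{2}\bigr)}{2\sqrt{\pi}\,r!\,\Gamma\bigl(A-r+\tfrac{3}{2}\bigr)}
\]
from the tangled sum of Gaussian moments. The cleanest way to extract this is to recognise that the contour integral is, up to normalising powers, the Hankel-type contour representation of the modified Bessel function $I_{-A-1}(2B\sqrt{n})$: the change of variables $t = nz$ gives $\int t^A e^{t + B^2 n/t}\,dt$, which (after adjoining exponentially small tails to complete to a Hankel contour) equals $2\pi i\, (B\sqrt{n})^{A+1} I_{-A-1}(2B\sqrt{n})$. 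Plugging the classical Debye expansion
\[
I_\nu(x) \sim \frac{e^x}{\sqrt{2\pi x}}\sum_{r\ge 0}\frac{(-1)^r}{(8x)^r r!}\prod_{j=1}^{r}\bigl(4\nu^2-(2j-1)^2\bigr)
\]
with $\nu = -A-1$ and $x = 2B\sqrt{n}$, rewriting the product using $4(-A-1)^2-(2j-1)^2 = -(2A+2j+1)(2A-2j+3)$ and simplifying with $\Gamma$-functions, reproduces $T_{A,B,r}$ exactly. The tail estimates needed to replace the full Hankel contour by $\mc C_1$ are routine, since the omitted pieces contribute $O(e^{-cn^{1/2}})$ relative to $e^{2B\sqrt{n}}$.
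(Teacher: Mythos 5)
The paper does not prove this lemma; it cites it verbatim from Ngo--Rhoades \cite[Lemma 3.7]{NR17IP}, so there is no in-paper argument to compare against. That said, your proposal is correct and is essentially the standard saddle-point derivation of that result. The algebra is right: the identity $(1+w)+(1+w)^{-1}=2+w^2/(1+w)$ with $w=iy$, the rescaling $y=t/(B^{1/2}n^{1/4})$ producing the prefactor $B^{A+1/2}/(2\pi)\cdot n^{-(2A+3)/4}$, and the parity argument are all sound --- writing the post-rescaling integrand as $e^{-t^2}\cdot(1+it\epsilon)^A\exp\bigl(it^3\epsilon/(1+it\epsilon)\bigr)$ with $\epsilon=B^{-1/2}n^{-1/4}$, each $\epsilon^k$ coefficient is a polynomial in $t$ whose monomials all have degree $\equiv k\pmod 2$, so the odd $k$ integrate to zero against the Gaussian and only powers $n^{-r/2}$ survive. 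Your Bessel shortcut for the coefficients also checks out: the Schl\"afli representation gives $\frac{1}{2\pi i}\int t^A e^{t+B^2n/t}\,dt = (B\sqrt n)^{A+1}I_{-A-1}(2B\sqrt n)$, and with $\nu=-A-1$ one has $4\nu^2-(2j-1)^2=(2A+2j+1)(2A-2j+3)$, whence $\prod_{j=1}^r\bigl(4\nu^2-(2j-1)^2\bigr)=4^r\,\Gamma(A+r+\tfrac32)/\Gamma(A-r+\tfrac32)$; plugging into the Debye expansion of $I_\nu$ reproduces $T_{A,B,r}$ exactly, and for $r=0$ this gives $B^{A+1/2}/(2\sqrt\pi)$, consistent with the special values $T_{0,\pi\sqrt{N/12},0}$ and $T_{1/2,\pi\sqrt{N/12},0}$ used later in the paper. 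The only place you gloss over detail is in asserting that the contributions from the parts of the Hankel contour away from the saddle, and from $|y|>\theta$ on the vertical line, are exponentially suppressed; a one-line justification is that on $z=\eta(1+iy)$ one has $\re(B^2/z+nz)=2B\sqrt n\bigl(1-\tfrac12 y^2/(1+y^2)\bigr)$, so for $|y|\ge\theta$ the integrand is smaller than the main term by a factor $e^{-c\sqrt n}$. With that noted, the argument is complete.
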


Using Cauchy's theorem, we express $d_{\alpha,\beta;N;c_0n^{1/4}}(n)$ as an integral
\[
d_{\alpha,\beta;N;c_0n^{1/4}}(n) = \frac{1}{2\pi i} \int_{\mc C} \frac{D_{\alpha,\beta;N;c_0n^{1/4}}(q)}{q^{n+1}} dq,
\]
where $\mc C$ is a circle centred at the origin inside the unit circle surrounding zero exactly once counterclockwise. Using \eqref{eq:D_def} and the change of variables $q = e^{-z}$, we write, for any $\eta>0$
\begin{align*}
d_{\alpha,\beta;N;c_0n^{1/4}}(n) &= \frac 1N \sum_{k=0}^{N-1} \sum_{\bm\ell\in(\Z/N\Z)^N} \zeta_N^{(n-NH(\bm\ell))k} \frac{1}{2\pi i} \int_{\eta-\pi i}^{\eta+\pi i} g_{\alpha,\beta;N;c_0n^{1/4};\bm\ell}(z) e^{\frac{nz}N} dz\\
&= \sum_{\substack{\bm\ell\in(\Z/N\Z)^N\\ NH(\bm\ell) \equiv n \ppmod{N}}} \frac 1{2\pi i} \int_{\eta-\pi i}^{\eta+\pi i} g_{\alpha,\beta;N;c_0n^{1/4};\bm\ell}(z) e^{\frac{nz}N} dz.
\end{align*}
Throughout we shall pick $\eta = \eta(n) := \frac{\pi N}{2\sqrt{3n}}$. In other words, we take $\varepsilon = \eta$ for results from \Cref{section:asymp}. Using \eqref{eq:g_split}, we split
\[
d_{\alpha,\beta;N;c_0n^{1/4}}(n) = \sum_{\substack{\bm\ell\in(\Z/N\Z)^N\\ NH(\bm\ell) \equiv n \ppmod{N}}}\frac{1}{2\pi i} \int_{\eta-\pi i}^{\eta+\pi i} \rb{g_{\alpha,\beta;N;c_0n^{1/4};\bm\ell}^{[1]}(z)+g_{\alpha,\beta;N;c_0n^{1/4};\bm\ell}^{[2]}(z)} e^{\frac{nz}N} dz.
\]
By \Cref{prp:out_of_range}, as $n\to\infty$ we have
\[
\frac{1}{2\pi i} \int_{\eta-\pi i}^{\eta+\pi i} g_{\alpha,\beta;N;c_0n^{1/4};\bm\ell}^{[2]}(z) e^{\frac{nz}N} dz \ll \frac{1}{2\pi i} \int_{\eta-\pi i}^{\eta+\pi i} e^{\frac{\pi^2N}{12\eta}+\frac{n\eta}N} \eta^L dz \ll e^{\pi\sqrt{n/3}} n^{-L/2} \quad \fa L\in\N.
\]
Next we estimate the integral
\[
\frac 1{2\pi i} \int_{\eta-\pi i}^{\eta+\pi i} g_{\alpha,\beta;N;c_0n^{1/4};\bm\ell}^{[1]}(z) e^{\frac{nz}N} dz.
\]

Let $\theta>0$ be fixed. We split the integral into the \emph{major arc} $\mc C_1(\eta) := \cbm{z = \eta(1+iy)}{\vb{y}\le \theta}$ and the \emph{minor arc} $\mc C_2(\eta) := \cbm{z = \eta(1+iy)}{\theta < \vb{y} \le \pi/\eta}$. First we consider the minor arc integral
\[
\frac{1}{2\pi i} \int_{\mc C_2(\eta)} g_{\alpha,\beta;N;c_0n^{1/4};\bm\ell}^{[1]}(z) e^{\frac{nz}N} dz.
\]
By \Cref{prp:minor_arc} and the observation that $\vb{\mc N_{\varepsilon,\lambda}} \ll \varepsilon^{N\lambda}$, we have, as $\varepsilon = \re(z) \to 0$
\[
g_{\alpha,\beta;N;c_0n^{1/4};\bm\ell}^{[1]}(z) \ll e^{\frac{\pi^2N}{12\varepsilon}} \varepsilon^L \quad \fa L\in\N.
\]
So we have
\[
\frac{1}{2\pi i} \int_{\mc C_2(\eta)} g_{\alpha,\beta;N;c_0n^{1/4};\bm\ell}^{[1]}(z) e^{\frac{nz}N} dz \ll \frac{1}{2\pi i} \int_{\mc C_2(\eta)} e^{\frac{\pi^2N}{12\eta}+\frac{n\eta}N} \eta^L dz \ll e^{\pi\sqrt{n/3}} n^{-L/2} \quad \fa L\in\N.
\]

Now consider the major arc integral
\[
\frac{1}{2\pi i} \int_{\mc C_1(\eta)} g_{\alpha,\beta;N;c_0n^{1/4};\bm\ell}^{[1]}(z) e^{\frac{nz}N} dz.
\]
Let $Y = Y(\eta) := \eta^{\frac 12-\delta}$ for some fixed, sufficiently small $\delta > 0$. We split the integral
\begin{multline}\label{eq:major_arc_split}
\frac{1}{2\pi i} \int_{\mc C_1(\eta)} g_{\alpha,\beta;N;c_0n^{1/4};\bm\ell}^{[1]}(z) e^{\frac{nz}N} dz\\
= \frac{1}{2\pi i} \int_{\mc C_{1a}(\eta)} g_{\alpha,\beta;N;c_0n^{1/4};\bm\ell}^{[1]}(z) e^{\frac{nz}N} dz + \frac{1}{2\pi i} \int_{\mc C_{1b}(\eta)} g_{\alpha,\beta;N;c_0n^{1/4};\bm\ell}^{[1]}(z) e^{\frac{nz}N} dz,
\end{multline}
where $\mc C_{1a}(\eta) = \cbm{z = \eta(1+iy)}{\vb{y} \le Y}$, and $\mc C_{1b}(\eta) = \cbm{z = \eta(1+iy)}{Y<\vb{y}\le \theta}$. For the integral over $\mc C_{1b}(\eta)$, we use \Cref{prp:g1_wide_range} and deduce that
\[
\frac{1}{2\pi i} \int_{\mc C_{1b}(\eta)} g_{\alpha,\beta;N;c_0n^{1/4};\bm\ell}^{[1]}(z) e^{\frac{nz}N} dz \ll \int_{\mc C_{1b}(\eta)} e^{\frac{\pi^2N}{12\eta}+\frac{n\eta}N} \eta^L dz \ll e^{\pi\sqrt{n/3}} n^{-L/2} \quad \fa L\in\N.
\]

For the integral over $\mc C_{1a}(\eta)$ in \eqref{eq:major_arc_split}, we apply \Cref{prp:g1_asymp}, and write for $L\in\N$
\begin{multline}\label{eq:arc_1a_decomp}
\frac{1}{2\pi i} \int_{\mc C_{1a}(\eta)} g_{\alpha,\beta;N;c_0n^{1/4};\bm\ell}^{[1]}(z) e^{\frac{nz}N} dz\\
= \frac{1}{\sqrt{2}\pi^{N/2} N^N} \cdot \frac{1}{2\pi i} \sum_{r=0}^{R_0-1} \int_{\mc C_{1a}(\eta)} z^{r/2} e^{\frac{\pi^2N}{12z}+\frac{nz}N} \int_{\R^{N-1}} \int_{u_\beta+\kappa\sqrt{z}}^\infty P_r(\bm{u_{[1]}},u_\alpha) du_\alpha d\bm{u_{[1]}} dz\\
+ \sum_{r=0}^{S(R_0)-1} W_r(\kappa) \frac{1}{2\pi i} \int_{\mc C_{1a}(\eta)} e^{\frac{\pi^2N}{12z}+\frac{nz}N-\frac{\kappa^2z}2} z^{(r+1)/2} dz + \frac{1}{2\pi i} \int_{\mc C_{1a}(\eta)} O\Brb{e^{\frac{\pi^2N}{12\varepsilon}}\varepsilon^L} e^{\frac{nz}N} dz.
\end{multline}
We note that $\kappa = \kappa_{\alpha,\beta;N;c_0n^{1/4};\bm\ell}$ can be written explicitly as
\begin{equation}\label{eq:kappa_expansion}
\kappa = c_0 n^{1/4} + \partial^*, \quad \text{ where } \quad  \partial^* = \partial + [\ell_\alpha-\ell_\beta-\lceil c_0 n^{1/4}\rceil]_N,
\end{equation}
where $[\ell]_N$ denotes the smallest non-negative integer congruent to $\ell\pmod{N}$. Since $\mc C_{1a}(\eta)$ has bounded length, the error term in \eqref{eq:arc_1a_decomp} has size $\ll e^{\pi\sqrt{n/3}} n^{-L/2}$. Now we evaluate the terms 
\begin{equation}\label{eq:W_integral_1a}
\frac{1}{2\pi i} \int_{\mc C_{1a}(\eta)} e^{\frac{\pi^2N}{12z}+\frac{nz}N-\frac{\kappa^2z}2} W_r(\kappa) z^{(r+1)/2} dz
\end{equation}
appearing in \eqref{eq:arc_1a_decomp}. To evaluate \eqref{eq:W_integral_1a}, we consider a corresponding integral over $\mc C_{1b}(\eta)$:
\[
\frac{1}{2\pi i} \int_{\mc C_{1b}(\eta)} e^{\frac{\pi^2N}{12z}+\frac{nz}N-\frac{\kappa^2z}2} W_r(\kappa) z^{(r+1)/2} dz.
\]
For $y\gg Y$, we have $e^{\pi^2N/12z} \ll e^{\pi^2N/12\varepsilon} \varepsilon^L$ for every $L\in\N$. Recalling that $W_r$ is a polynomial of degree at most $r$, it follows that
\[
\frac{1}{2\pi i} \int_{\mc C_{1b}(\eta)} e^{\frac{\pi^2N}{12z}+\frac{nz}N-\frac{\kappa^2z}2} W_r(\kappa) z^{(r+1)/2} dz \ll e^{\pi\sqrt{n/3}} n^{-L/2} \quad \fa L\in\N.
\]
So we may extend \eqref{eq:W_integral_1a} to an integral over the major arc, introducing a negligible error:
\begin{multline}\label{eq:W_integral_extend}
\frac{1}{2\pi i} \int_{\mc C_{1a}(\eta)} e^{\frac{\pi^2N}{12z}+\frac{nz}N-\frac{\kappa^2z}2} W_r(\kappa) z^{(r+1)/2} dz\\
= \frac{1}{2\pi i} \int_{\mc C_1(\eta)} e^{\frac{\pi^2N}{12z}+\frac{nz}N-\frac{\kappa^2z}2} W_r(\kappa) z^{(r+1)/2} dz + O\Brb{e^{\pi\sqrt{n/3}} n^{-L/2}}.
\end{multline}

To apply \Cref{lem:NR} to \eqref{eq:W_integral_extend}, we need to shift the path of integration from $\mc C_1(\eta)$ to $\mc C_1(\eta')$, where $\eta' = \frac{\pi N}{2\sqrt{3(n-\kappa^2N/2)}}$. We split the integral in \eqref{eq:W_integral_extend} into three pieces, writing
\begin{multline}\label{eq:W_integral_shift}
\frac{1}{2\pi i} \int_{\mc C_1(\eta)} e^{\frac{\pi^2N}{12z}+\frac{nz}N-\frac{\kappa^2z}2} W_r(\kappa) z^{(r+1)/2} dz = \frac{1}{2\pi i} \int_{\eta(1-\theta i)}^{\eta'(1-\theta i)} e^{\frac{\pi^2N}{12z}+\frac{nz}N-\frac{\kappa^2z}2} W_r(\kappa) z^{(r+1)/2} dz\\
+ \frac{1}{2\pi i} \int_{\mc C_1(\eta')} e^{\frac{\pi^2N}{12z}+\frac{nz}N-\frac{\kappa^2z}2} W_r(\kappa) z^{(r+1)/2} dz + \frac{1}{2\pi i} \int_{\eta'(1+\theta i)}^{\eta(1+\theta i)} e^{\frac{\pi^2N}{12z}+\frac{nz}N-\frac{\kappa^2z}2} W_r(\kappa) z^{(r+1)/2} dz.
\end{multline}
Recall that for $y\gg \varepsilon^{1/2-\delta}$, we have $e^{\pi^2N/12z} \ll e^{\pi^2N/12\varepsilon} \varepsilon^L$ for every $L\in\N$. This implies the first and the third integrals on the right-hand side in \eqref{eq:W_integral_shift} have size $\ll e^{\pi\sqrt{n/3}} n^{-L/2}$ for all $L\in\N$. Meanwhile, using \Cref{lem:NR}, we have for $S\in\N$
\begin{multline}\label{eq:W_asymp1}
\frac{1}{2\pi i} \int_{\mc C_1(\eta')} e^{\frac{\pi^2N}{12z}+\frac{nz}N-\frac{\kappa^2z}2} W_r(\kappa) z^{(r+1)/2} dz = \exp\rb{\pi\sqrt{\frac{n-\kappa^2N/2}3}} \rb{\frac nN-\frac{\kappa^2}2}^{-r/4-1}\\
\times \rb{\sum_{s=0}^{S-1} T_{(r+1)/2,\pi\sqrt{N/12},s} \rb{\frac nN - \frac{\kappa^2}2}^{-s/2} + O\rb{n^{-S/2}}}.
\end{multline}
Using \eqref{eq:kappa_expansion}, we see that the factors in \eqref{eq:W_asymp1} admit Taylor series expansions in $n^{-1/4}$:
\begin{align}
 \exp\rb{\pi\sqrt{\frac{n-\kappa^2N/2}3}} &= \exp\rb{\pi\sqrt{\frac n3} - \frac{c_0^2\pi N}{4\sqrt{3}}} \rb{1+O\rb{n^{-1/4}}},\label{eq:W_Taylor1}\\
 \rb{\frac nN-\frac{\kappa^2}2}^{-r/4-1} &= \rb{\frac nN}^{-r/4-1} \rb{1+O\rb{n^{-1/4}}} \label{eq:W_Taylor2}
\end{align}
Thus we have an asymptotic expansion
\begin{align}\nonumber
&\frac{1}{2\pi i} \int_{\mc C_1(\eta')} e^{\frac{\pi^2N}{12z}+\frac{nz}N-\frac{\kappa^2z}2} W_r(\kappa) z^{(r+1)/2} dz\\
&\hspace{2cm} = e^{\pi\sqrt{n/3}} n^{-(r+3)/4} \rb{\sum_{s=1}^{S-1} A_{r,s,\bm\ell}^{[1]} n^{-s/4} + O\rb{n^{-S/4}}},\label{eq:W_asymp2}
\end{align}
where $A_{r,s,\bm\ell}^{[1]}$ depends only on $\alpha,\beta,c_0,N,\bm\ell,\partial^*$. 

Next we evaluate the integral
\[
\frac{1}{2\pi i} \int_{\mc C_{1a}(\eta)} z^{r/2} e^{\frac{\pi^2N}{12z}+\frac{nz}N} \int_{\R^{N-1}} \int_{u_\beta+\kappa\sqrt{z}}^\infty P_r(\bm{u_{[1]}},u_\alpha)du_\alpha d\bm{u_{[1]}} dz.
\]
We split the inner integral, and write
\begin{multline}\label{eq:main_integral_split}
\frac{1}{2\pi i} \int_{\mc C_{1a}(\eta)} z^{r/2} e^{\frac{\pi^2N}{12z}+\frac{nz}N} \int_{\R^{N-1}} \int_{u_\beta+\kappa\sqrt{z}}^\infty P_r(\bm{u_{[1]}},u_\alpha)du_\alpha d\bm{u_{[1]}} dz\\
= \frac{1}{2\pi i} \int_{\mc C_{1a}(\eta)} z^{r/2} e^{\frac{\pi^2N}{12z}+\frac{nz}N} \int_{\R^{N-1}} \int_{u_\beta+\kappa\sqrt{\eta}}^\infty P_r(\bm{u_{[1]}},u_\alpha)du_\alpha d\bm{u_{[1]}} dz\\ - \frac{1}{2\pi i} \int_{\mc C_{1a}(\eta)} z^{r/2} e^{\frac{\pi^2N}{12z}+\frac{nz}N} \int_\R \int_{u_\beta+\kappa\sqrt{\eta}}^{u_\beta+\kappa\sqrt{z}} P_r(\bm{u_{[1]}},u_\alpha)du_\alpha d\bm{u_{[1]}} dz.
\end{multline}
We may rewrite the first integral in \eqref{eq:main_integral_split} as
\begin{multline*}
\frac{1}{2\pi i} \int_{\mc C_{1a}(\eta)} z^{r/2} e^{\frac{\pi^2N}{12z}+\frac{nz}N} \int_{\R^{N-1}} \int_{u_\beta+\kappa\sqrt{\eta}}^\infty P_r(\bm{u_{[1]}},u_\alpha)du_\alpha d\bm{u_{[1]}} dz\\
= \int_{\R^{N-1}} \int_{u_\beta+\kappa\sqrt{\eta}}^\infty C_r(\bm u) e^{-\bm u^T\bm u} du_\alpha d\bm{u_{[1]}} \cdot \frac{1}{2\pi i} \int_{\mc C_{1a}(\eta)} z^{r/2} e^{\frac{\pi^2N}{12z} + \frac{nz}N} dz. 
\end{multline*}
Using \eqref{eq:kappa_expansion}, the integral over $\bm u$ admits a Taylor series expansion in $n^{-1/4}$. For the integral over $z$, we use the same trick as for \eqref{eq:W_integral_1a} and extend the integral over $\mc C_1(\eta)$ gaining a negligible error, then use \Cref{lem:NR} to obtain an asymptotic expansion. Thus we have for $S\in\N$
\begin{multline}\label{eq:main_asymp}
\frac{1}{\sqrt{2}\pi^{N/2} N^N} \int_{\R^{N-1}} \int_{u_\beta+\kappa\sqrt{\eta}}^\infty C_r(\bm u) e^{-\bm u^T\bm u} du_\alpha d\bm{u_{[1]}} \cdot \frac{1}{2\pi i}\int_{\mc C_{1a}(\eta)} z^{r/2} e^{\frac{\pi^2N}{12z} + \frac{nz}{N}} dz\\
= e^{\pi\sqrt{n/3}} n^{-(r+3)/4} \rb{\sum_{s=0}^{S-1} A_{r,s,\bm\ell}^{[0]} n^{-s/4} + O\rb{n^{-S/4}}},
\end{multline}
where $A_{r,s,\bm\ell}^{[0]}$ depends only on $\alpha,\beta,c_0,N,\bm\ell,\partial^*$. The second integral in \eqref{eq:main_integral_split} reads
\begin{equation}\label{eq:zeta_integral}
\frac{1}{2\pi i} \int_{\mc C_{1a}(\eta)} z^{r/2} e^{\frac{\pi^2N}{12z}+\frac{nz}N} \int_{\R^{N-1}} \int_{u_\beta+\kappa\sqrt{\eta}}^{u_\beta+\kappa\sqrt{z}} P_r(\bm u) du_\alpha d\bm{u_{[1]}} dz.
\end{equation}
Using \Cref{prp:EMF}, we may rewrite the inner integral in \eqref{eq:zeta_integral} as
\begin{multline}\label{eq:zeta_EMF}
\int_{u_\beta+\kappa\sqrt{\eta}}^{u_\beta+\kappa\sqrt{z}} P_r(\bm u) du_\alpha = \frac{\sqrt{z}-\sqrt{\eta}}2 \rb{P_r(\bm{u_{[1]}}, u_\beta+\kappa\sqrt{z}) + P_r(\bm{u_{[1]}}, u_\beta+\kappa\sqrt{\eta})}\\
- \sum_{s=1}^S \frac{B_{2s}(\sqrt{z}-\sqrt{\eta})^{2s}}{(2s)!} \rb{P_r^{(2s-1)}(\bm{u_{[1]}}, u_\beta+\kappa\sqrt{z}) - P_r^{(2s-1)}(\bm{u_{[1]}}, u_\beta+\kappa\sqrt{\eta})}\\
+ O(1)\rb{\sqrt{z}-\sqrt{\eta}}^{2S+1} \int_{u_\beta+\kappa\sqrt{\eta}}^{u_\beta+\kappa\sqrt{z}} \vb{P_r^{(2S+1)}(\bm u)} du_\alpha.
\end{multline}
As $|P_r^{(2S+1)}(\bm u)| \ll e^{-\bm{u_{[1]}}^T\bm{u_{[1]}}}$ and $\sqrt{z}-\sqrt{\eta} \ll \sqrt{\eta} Y \ll \eta^{1-\delta}$, it follows that the contribution of the error term to the integral \eqref{eq:zeta_integral} is bounded by $O(e^{\pi\sqrt{n/3}} n^{-S/2})$. For the remaining terms in \eqref{eq:zeta_EMF}, we replace $P_r^{(j)}(\bm {u_{[1]}},u_\beta+\kappa\sqrt{z})$ by the Taylor series
\begin{multline}\label{eq:zeta_Taylor}
P_r^{(j)}(\bm {u_{[1]}},u_\beta+\kappa\sqrt{z})\\
= \sum_{s=0}^{2S} \frac{(\sqrt{z}-\sqrt{\eta})^s}{s!} P_r^{(j+s)} (\bm {u_{[1]}},u_\beta+\kappa\sqrt{\eta}) + O\rb{\rb{\sqrt{z}-\sqrt{\eta}}^{2S+1} e^{-\bm{u_{[1]}}^T\bm{u_{[1]}}}}. 
\end{multline}
Therefore, with an error of $O(e^{\pi\sqrt{n/3}} n^{-S/2})$, we may replace \eqref{eq:zeta_integral} with a finite sum of integrals of the form
\[
\eta^{k/2} \int_{\R^{N-1}} P_r^{(l)}(\bm{u_{[1]}}, u_\beta+\kappa\sqrt{\eta}) d\bm{u_{[1]}} \cdot \frac 1{2\pi i} \int_{\mc C_{1a}(\eta)} z^{(r+j)/2}  e^{\frac{\pi^2N}{12z} + \frac{nz}{N}} dz
\]
for some $j,k,l \in \N_0$, with $j+k\ge 1$. By extending the integral to $\mc C_1(\eta)$ and applying \Cref{lem:NR}, we obtain an asymptotic expansion of the form
\begin{multline}\label{eq:zeta_asymp}
- \frac{1}{\sqrt{2}\pi^{N/2} N^N} \cdot \frac{1}{2\pi i} \int_{\mc C_{1a}(\eta)} z^{r/2} e^{\frac{\pi^2N}{12z}+\frac{nz}N} \int_{\R^{N-1}} \int_{u_\beta+\kappa\sqrt{\eta}}^{u_\beta+\kappa\sqrt{z}} P_r(\bm u)du_\alpha d\bm{u_{[1]}}dz\\
 = e^{\pi\sqrt{n/3}} n^{-(r+3)/4} \rb{\sum_{s=1}^{S-1} A_{r,s,\bm\ell}^{[2]} n^{-s/4} + O\rb{n^{-S/4}}}.
\end{multline}
Combining the asymptotic expansions \eqref{eq:W_asymp2}, \eqref{eq:main_asymp}, and \eqref{eq:zeta_asymp} yields the following proposition.

\begin{prp}\label{prp:d_asymp}
Let $N\in\N_{\ge 2}$, $1\le\alpha,\beta\le N$, $\alpha\ne\beta$, and $c_0\in\R$ a fixed constant. Then for $R\in\N$, we have as $n\to\infty$
\begin{multline*}
d_{\alpha,\beta;N;c_0n^{1/4}}(n)= e^{\pi\sqrt{n/3}} n^{-3/4} \\
\times \sum_{\substack{\bm\ell\in(\Z/N\Z)^N\\ NH(\bm\ell) \equiv n \ppmod{N}}} \rb{A_{0,0,\bm\ell}^{[0]} + n^{-r/4} \sum_{r=1}^{R-1} \sum_{j+k=r} \rb{A_{j,k,\bm\ell}^{[0]}+A_{j,k,\bm\ell}^{[1]}+A_{j,k,\bm\ell}^{[2]}} + O\rb{n^{-R/4}}},
\end{multline*}
where $A_{j,k,\bm\ell}^{[m]}$ depends only on $\alpha,\beta,c_0,N,\bm\ell,\partial^*$.
\end{prp}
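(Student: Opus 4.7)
The groundwork for the proposition has essentially been laid in the preceding discussion; what remains is to organise the bookkeeping and verify that the terms combine into a clean power series in $n^{-1/4}$. The plan is to start from Cauchy's formula
\[
d_{\alpha,\beta;N;c_0n^{1/4}}(n) = \sum_{\substack{\bm\ell\in(\Z/N\Z)^N\\ NH(\bm\ell) \equiv n \ppmod{N}}}\frac{1}{2\pi i}\int_{\eta-\pi i}^{\eta+\pi i} \Brb{g^{[1]}_{\bm\ell}(z)+g^{[2]}_{\bm\ell}(z)} e^{nz/N} dz,
\]
with $\eta = \pi N/(2\sqrt{3n})$, and to discard all the contributions that have already been shown to be negligible: the $g^{[2]}_{\bm\ell}$ part (via \Cref{prp:out_of_range}), the minor-arc integral over $\mc C_2(\eta)$ (via \Cref{prp:minor_arc}), and the intermediate-range integral over $\mc C_{1b}(\eta)$ (via \Cref{prp:g1_wide_range}). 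Each of these contributes $O(e^{\pi\sqrt{n/3}} n^{-L/2})$ for arbitrarily large $L$ and thus is absorbed into the final error term.

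The real content lies on the inner arc $\mc C_{1a}(\eta)$. Here I would insert the expansion from \Cref{prp:g1_asymp} and observe that the result of the decomposition is the sum of three families of integrals, which I label exactly as $[0],[1],[2]$ in accordance with the three summands that will define the $A^{[m]}_{j,k,\bm\ell}$. Concretely, the $[1]$-family is the $W_r(\kappa)$-integral in \eqref{eq:W_integral_1a}; after extending $\mc C_{1a}(\eta)$ to $\mc C_1(\eta)$ at a cost of a rapidly decaying error, shifting from $\eta$ to $\eta'$ to absorb the $-\kappa^2z/2$ factor into the Bessel-type exponent, and applying \Cref{lem:NR}, one obtains \eqref{eq:W_asymp2} with coefficients depending only on $\alpha,\beta,c_0,N,\bm\ell,\partial^*$ via the explicit form $\kappa = c_0n^{1/4}+\partial^*$ and the Taylor expansions \eqref{eq:W_Taylor1}--\eqref{eq:W_Taylor2}. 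The $[0]$-family is the portion of the $P_r$-integral obtained by freezing the lower limit at $u_\beta+\kappa\sqrt{\eta}$; here the integral over $\bm u$ admits a Taylor expansion in $n^{-1/4}$, and the remaining $z$-integral again yields to \Cref{lem:NR}, producing \eqref{eq:main_asymp}.

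The $[2]$-family, representing the difference between the moving endpoint $u_\beta+\kappa\sqrt{z}$ and the frozen endpoint $u_\beta+\kappa\sqrt{\eta}$, is where the main technical obstacle arises. The plan is to first apply \Cref{prp:EMF} to rewrite the inner integral \eqref{eq:zeta_EMF} as a sum of boundary evaluations of $P_r^{(2s-1)}$ at the moving and frozen endpoints, plus a tail that is $O(e^{\pi\sqrt{n/3}} n^{-S/2})$ because $\sqrt z-\sqrt\eta \ll \eta^{1-\delta}$ and $P_r^{(j)}$ has Gaussian decay. Each resulting boundary term is then Taylor-expanded via \eqref{eq:zeta_Taylor} around $\sqrt\eta$, producing only integer powers of $\sqrt z - \sqrt\eta$. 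The upshot is a finite linear combination of integrals of the form $\eta^{k/2}\cdot\bigl(\int_{\R^{N-1}} P_r^{(l)}d\bm{u_{[1]}}\bigr)\cdot \frac{1}{2\pi i}\int_{\mc C_{1a}(\eta)} z^{(r+j)/2}e^{\pi^2N/12z+nz/N}dz$, each of which becomes explicit via a final application of \Cref{lem:NR}, giving \eqref{eq:zeta_asymp}. The tricky step is ensuring that the combined expansion is genuinely in powers of $n^{-1/4}$ and not in some finer fractional scale, which follows because both $\sqrt\eta$ and $\kappa$ are, up to rational constants, polynomials in $n^{-1/4}$ whose constant terms depend only on $\alpha,\beta,c_0,N,\bm\ell$ through $\partial^*$.

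Finally, I would add the three expansions \eqref{eq:W_asymp2}, \eqref{eq:main_asymp}, \eqref{eq:zeta_asymp} together, re-index by the total power $r=j+k$ of $n^{-1/4}$, and sum over $\bm\ell$ satisfying the congruence $NH(\bm\ell)\equiv n\pmod N$, yielding the stated form with coefficients $A^{[m]}_{j,k,\bm\ell}$ depending only on $\alpha,\beta,c_0,N,\bm\ell,\partial^*$. The leading coefficient $A^{[0]}_{0,0,\bm\ell}$ arises solely from the $[0]$-family at $r=s=0$, consistent with the fact that the $[1]$- and $[2]$-families only begin contributing at order $n^{-1/4}$.
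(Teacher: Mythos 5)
Your proposal is correct and mirrors the paper's argument essentially step for step: the same reduction by \Cref{prp:out_of_range}, \Cref{prp:minor_arc}, and \Cref{prp:g1_wide_range} to the arc $\mc C_{1a}(\eta)$; the same three-way split of the $\mc C_{1a}$ contribution into the $W_r(\kappa)$-terms (the $[1]$-family, handled by extending to $\mc C_1(\eta)$, shifting the saddle to $\eta'$, and applying \Cref{lem:NR}), the frozen-endpoint $P_r$-integral (the $[0]$-family, via Taylor expansion of the Gaussian integral and \Cref{lem:NR}), and the moving-minus-frozen endpoint difference (the $[2]$-family, via \Cref{prp:EMF} applied ``backwards'' and the Taylor expansion \eqref{eq:zeta_Taylor}); and the same final reindexing by $r=j+k$. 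You also correctly identify the key structural point that everything lands in genuine powers of $n^{-1/4}$ because $\sqrt\eta\asymp n^{-1/4}$ and $\kappa=c_0n^{1/4}+\partial^*$ combine so that the quantities $\kappa\sqrt\eta$, $\kappa^2\eta$, and $n-\kappa^2N/2$ all admit $n^{-1/4}$-expansions with coefficients depending only on $\alpha,\beta,c_0,N,\bm\ell,\partial^*$.
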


\section{Explicit evaluations and proofs of the theorems}\label{section:coefficients}

For the proof of \Cref{thm:main1}, we need to compute the first two terms of the asymptotic expansion in \Cref{prp:d_asymp}. For this we have to compute $A_{0,0,\bm\ell}^{[0]}, A_{0,1,\bm\ell}^{[0]}, A_{1,0,\bm\ell}^{[0]}, A_{0,1,\bm\ell}^{[1]}$, and $A_{0,1,\bm\ell}^{[2]}$. 

\begin{proof}[Proof of \Cref{thm:main1}]
From \cite[Section 6]{BMRS24AP} we find
\begin{align*}
T_{0,\pi\sqrt{N/12},0} &= \frac{N^{1/4}}{2\sqrt[4]{12}}, & T_{1/2,\pi\sqrt{N/12},0} &= \frac{\sqrt{\pi N}}{4\sqrt{3}}, & C_0(\bm u) &= 1, & C_1(\bm u) &= \sum_{j=1}^N -\frac{j u_j}N + \frac{u_j^3}3.
\end{align*}
We also evaluate the Gaussian integrals
\begin{align*}
\int_{\R^{N-1}} \int_{u_\beta+\kappa\sqrt{\eta}} C_0(\bm u) e^{-\bm u^T\bm u} &= \frac{\pi^{N/2}}2 \erfc\rb{\frac{\kappa\sqrt{\pi N}}{2\sqrt[4]{3n}}},\\
\int_{\R^{N-1}} \int_{u_\beta+\kappa\sqrt{\eta}} C_1(\bm u) e^{-\bm u^T\bm u} &= e^{-\kappa^2\eta/2} \frac{(\beta-\alpha)\pi^{(N-1)/2}}{2\sqrt{2} N}. 
\end{align*}
To compute $A_{0,0,\bm\ell}^{[0]}$ and $A_{0,1,\bm\ell}^{[0]}$, we put these back into \eqref{eq:main_asymp} and obtain
\begin{align*}
&\frac{1}{\sqrt{2}\pi^{N/2}N^N} \int_{\R^{N-1}} \int_{u_\beta+\kappa\sqrt{\eta}}^\infty C_0(\bm u) e^{-\bm u^T\bm u} du_\alpha d\bm{u_{[1]}} \cdot \int_{\mc C_{1a}(\eta)} e^{\frac{\pi^2N}{12z} + \frac{nz}N} dz\\
&\hspace{1cm} = \frac{e^{\pi\sqrt{n/3}}}{2\sqrt{2}N^{N-3/4}n^{3/4}} \erfc\rb{\frac{\kappa\sqrt{\pi N}}{2\sqrt[4]{3n}}} \rb{T_{0,\pi\sqrt{N/12},0} + O\rb{n^{-1/2}}}\\
&\hspace{1cm} = \frac{e^{\pi\sqrt{n/3}}}{8\sqrt[4]{3} N^{N-1} n^{3/4}} \rb{\erfc\rb{\frac{c_0\sqrt{\pi N}}{2\sqrt[4]{3}}} - e^{-c_0^2\pi N/4\sqrt{3}} \frac{\partial^*}{\sqrt[4]{3n}} + O\rb{ n^{-1/2}}}. 
\end{align*}
Thus we have
\begin{align*}
A_{0,0,\bm\ell}^{[0]} &= \frac{1}{8\sqrt[4]{3} N^{N-1}} \erfc\rb{\frac{c_0\sqrt{\pi N}}{2\sqrt[4]{3}}}, & A_{0,1,\bm\ell}^{[0]} &= -e^{-c_0^2\pi N/4\sqrt{3}} \frac{\partial^*}{8\sqrt{3}N^{N-3/2}}.
\end{align*}
Similarly, for $A_{1,0,\bm\ell}^{[0]}$ we compute
\begin{align*}
&\frac{1}{\sqrt{2}\pi^{N/2}N^N} \int_{\R^{N-1}} \int_{u_\beta+\kappa\sqrt{\eta}}^\infty C_1(\bm u) e^{-\bm u^T\bm u} du_\alpha d\bm{u_{[1]}} \cdot \int_{\mc C_{1a}(\eta)} z^{1/2} e^{\frac{\pi^2N}{12z} + \frac{nz}N} dz\\
&\hspace{1cm} = \frac{e^{\pi\sqrt{n/3}}(\beta-\alpha)}{4\sqrt{\pi} N^N n} e^{-\kappa^2\eta/2} \rb{T_{1/2,\pi\sqrt{N/12},0} + O\rb{n^{-1/2}}}\\
&\hspace{1cm} = \frac{e^{\pi\sqrt{n/3}} (\beta-\alpha)}{16\sqrt{3} N^{N-1/2} n} \rb{e^{-c_0^2\pi N/4\sqrt{3}} + O\rb{n^{-1/4}}}. 
\end{align*}
Thus we have
\[
A_{1,0,\bm\ell}^{[0]} = e^{-c_0^2\pi N/4\sqrt{3}} \frac{\beta-\alpha}{16\sqrt{3}N^{N-1/2}}.
\]

For the computation of $A_{0,1,\bm\ell}^{[1]}$, we find from the proof of \Cref{prp:g1_asymp} that
\[
W_0(\kappa) = \frac{1}{\sqrt{2} \pi^{N/2}} V_{0,0}(\kappa) = \frac{1}{\sqrt{2}\pi^{N/2}} \rb{\frac{\pi^{(N-1)/2}}{2\sqrt{2} N^{N-1}}} = \frac{1}{4\sqrt{\pi} N^{N-1}}. 
\]
Putting this back to \eqref{eq:W_asymp1} gives, using \eqref{eq:W_Taylor1} and \eqref{eq:W_Taylor2}
\begin{align*}
&\frac{1}{2\pi i} \int_{\mc C_1(\eta')} e^{\frac{\pi^2N}{12z}+\frac{nz}N-\frac{\kappa^2z}2} W_0(\kappa) z^{1/2}dz\\
&\hspace{1cm} = \frac{1}{4\sqrt{\pi} N^{N-1}} \exp\rb{\pi\sqrt{\frac{n-\kappa^2N/2}3}} \rb{\frac nN-\frac{\kappa^2}2}^{-1} \rb{T_{1/2,\pi\sqrt{N/12},0}+ O\rb{n^{-1/2}}}\\
&\hspace{1cm} = \frac{e^{\pi\sqrt{n/3}}}{16\sqrt{3} N^{N-5/2} n} \rb{e^{-c_0^2\pi N/4\sqrt{3}} + O\rb{n^{-1/4}}}. 
\end{align*}
Thus we have
\[
A_{0,1,\bm\ell}^{[1]} = e^{-c_0^2\pi N/4\sqrt{3}} \frac{1}{16\sqrt{3} N^{N-5/2}}.
\]

Finally we compute $A_{0,1,\bm\ell}^{[2]}$. Using \eqref{eq:zeta_EMF} and \eqref{eq:zeta_Taylor}, we find
\begin{align*}
&-\frac{1}{\sqrt{2}\pi^{N/2} N^N} \cdot \frac{1}{2\pi i} \int_{\mc C_{1a}(\eta)} e^{\frac{\pi^2N}{12z}+\frac{nz}N} \int_{\R^{N-1}} \int_{u_\beta+\kappa\sqrt{\eta}}^{u_\beta+\kappa\sqrt{z}} P_0(\bm u)du_\alpha d\bm{u_{[1]}}dz\\
&\hspace{0.5cm} =-\frac{1}{\sqrt{2}\pi^{N/2} N^N} \cdot \frac{1}{2\pi i} \int_{\mc C_{1a}(\eta)} \rb{\sqrt{z}-\sqrt{\eta}}e^{\frac{\pi^2N}{12z}+\frac{nz}N} dz\\
&\hspace{2cm}\times \int_{\R^{N-1}} e^{-(\bm{u_{[1]}},u_\beta+\kappa\sqrt{\eta})^T(\bm{u_{[1]}},u_\beta+\kappa\sqrt{\eta})} d\bm{u_{[1]}} + O\rb{e^{\pi\sqrt{n/3}} n^{-5/4}}\\
&\hspace{0.5cm} = \frac{e^{\pi\sqrt{n/3}-\kappa^2\eta/2}}{2\sqrt{\pi}N^N} \rb{-\frac Nn T_{1/2,\pi\sqrt{N/12},0} + \frac{N^{3/4}\eta^{1/2}}{n^{3/4}} T_{0,\pi\sqrt{N/12},0} + O\rb{n^{-5/4}}},
\end{align*}
where the first term comes from the $\sqrt{z}$-integral, and the second term comes from the $\sqrt{\eta}$-integral. A direct check shows that these two terms cancel, and we have $A_{0,1,\bm\ell}^{[2]} = 0$. Putting the values of $A_{0,0,\bm\ell}^{[0]}, A_{0,1,\bm\ell}^{[0]}, A_{1,0,\bm\ell}^{[0]}, A_{0,1,\bm\ell}^{[1]}, A_{0,1,\bm\ell}^{[2]}$ back to \Cref{prp:d_asymp} yields \Cref{thm:main1}.
\end{proof}

To prove \Cref{thm:main1a}, we need the following lemma from \cite{BMRS24AP}, which follows by a direct calculation.
\begin{lem}[{\cite[Lemma 6.1]{BMRS24AP}}] \label{lem:l_count}
Let $N\ge 5$, $1\le \alpha,\beta \le N$, $\alpha\ne\beta$, and $r,\ell_\alpha,\ell_\beta\in\Z/N\Z$. Then
\begin{equation}\label{eq:l_count}
\#\cbm{\bm{\ell_{[2]}} \in (\Z/N\Z)^{N-2}}{NH(\bm{\ell_{[2]}}, \ell_\alpha,\ell_\beta) \equiv r\pmod{N}} = N^{N-3},
\end{equation}
where $\bm{\ell_{[2]}} \in (\Z/N\Z)^{N-2}$ runs through the indices $j\ne\cb{\alpha,\beta}$.
\end{lem}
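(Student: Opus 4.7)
The plan is to reduce the quadratic congruence $NH(\bm\ell)\equiv r\pmod{N}$ to a linear congruence on $\bm{\ell_{[2]}}$, and then count solutions of the latter by a standard gcd argument.

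First I would compute $NH(\bm m)\pmod{N}$ explicitly. From the definition \eqref{eq:H_def} we have $b_j = j/N - 1/2$, so
\begin{equation*}
NH(\bm m) = \frac{N}{2}\sum_{j=1}^{N} m_j^2 + \sum_{j=1}^{N}\rb{j-\tfrac{N}{2}}m_j = \sum_{j=1}^{N} j\,m_j + N\sum_{j=1}^{N}\binom{m_j}{2},
\end{equation*}
using $\tfrac{N}{2}m(m-1)=N\binom{m}{2}$. The second sum vanishes modulo $N$, so
\begin{equation*}
NH(\bm m)\equiv \sum_{j=1}^{N} j\,m_j \pmod{N}.
\end{equation*}

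Consequently, with $\ell_\alpha,\ell_\beta$ fixed, the condition $NH(\bm{\ell_{[2]}},\ell_\alpha,\ell_\beta)\equiv r\pmod{N}$ is equivalent to the single linear congruence
\begin{equation*}
\sum_{j\ne \alpha,\beta} j\,\ell_j \equiv r-\alpha\ell_\alpha-\beta\ell_\beta \pmod{N}
\end{equation*}
in the $k:=N-2$ unknowns $\ell_j\in\Z/N\Z$ for $j\ne \alpha,\beta$. A standard fact about linear congruences says that $\sum a_ix_i\equiv c\pmod{N}$ has exactly $d\cdot N^{k-1}$ solutions when $d:=\gcd(a_1,\ldots,a_k,N)$ divides $c$, and none otherwise. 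Hence \eqref{eq:l_count} will follow at once from the claim that $d=1$, i.e.
\begin{equation*}
\gcd\rb{\cb{j:1\le j\le N,\; j\ne \alpha,\beta},\; N}=1 \quad \text{for every } \alpha\ne\beta.
\end{equation*}

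The main thing to verify (and the only place the assumption $N\ge 5$ enters) is this last gcd statement. Equivalently, for each prime $p\mid N$ I need to find some $j\in\cb{1,\ldots,N}\bs\cb{\alpha,\beta}$ with $p\nmid j$. Among $\cb{1,\ldots,N}$ there are exactly $N-N/p=N(p-1)/p$ integers not divisible by $p$, so removing at most two still leaves one provided $N(p-1)/p\ge 3$, i.e.\ $N\ge 3p/(p-1)$. For $p=2$ this reads $N\ge 6$, which holds since $N\ge 5$ forces the even case to be $N\ge 6$; for $p\ge 3$ it reads $N\ge 9/2$, i.e.\ $N\ge 5$. Thus the hypothesis $N\ge 5$ is exactly what the argument requires, completing the proof. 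No step here is genuinely hard; the only subtle point is the case check for small primes dividing $N$, which is where the boundary $N\ge 5$ comes from (and indeed the lemma fails for $N=3,4$, e.g.\ $N=4$, $\cb{\alpha,\beta}=\cb{1,3}$ leaves only even residues).
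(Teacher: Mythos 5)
Your proof is correct. The key observation $NH(\bm m)\equiv\sum_j j\,m_j\pmod N$ is exactly right (the term $\frac N2\sum_j m_j(m_j-1)$ is an integer multiple of $N$ regardless of the parity of $N$, since $\binom{m_j}{2}$ is an integer), and the reduction to a single linear congruence followed by the standard count $d\cdot N^{k-1}$ where $d=\gcd(\text{coefficients},N)$ is the natural ``direct calculation'' alluded to by the paper, which simply cites the result from \cite{BMRS24AP} without reproducing the argument. Your analysis of where $N\ge 5$ enters is a nice addition: the pigeonhole bound $N(p-1)/p\ge 3$ for each prime $p\mid N$, together with the explicit failure at $N=4$, $\{\alpha,\beta\}=\{1,3\}$ (and similarly $N=3$, $\{\alpha,\beta\}=\{1,2\}$), correctly pins down the threshold.
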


\begin{proof}[Proof of \Cref{thm:main1a}]
The case $N=2$ can be verified directly from \Cref{thm:main1}. Now suppose $N\ge 5$. By \Cref{lem:l_count}, for every $r,\ell_\alpha,\ell_\beta \in \Z/N\Z$, there are precisely $N^{N-3}$ tuples $\bm\ell\in(\Z/N\Z)^N$ with given values of $\ell_\alpha$ and $\ell_\beta$ such that $NH(\bm\ell)\equiv r\pmod{N}$. So we may evaluate the inner sum in \Cref{thm:main1} as follows:
\begin{align*}
&\sum_{\substack{\bm\ell \in (\Z/N\Z)^N\\ NH(\bm\ell) \equiv n\ppmod N}} \rb{\frac{1}{8\sqrt[4]{3}N^{N-1}} \erfc\rb{\frac{c_0\sqrt{\pi N}}{2\sqrt[4]{3}}} + \frac{e^{-c_0^2\pi N/4\sqrt{3}} (N^2-2\partial^*N+(\beta-\alpha))}{16\sqrt{3} N^{N-1/2} n^{1/4}}}\\
&\hspace{0.2cm}= \frac{1}{8\sqrt[4]{3}} \erfc\rb{\frac{c_0\sqrt{\pi N}}{2\sqrt[4]{3}}} + \sum_{\ell_\alpha,\ell_\beta\in \Z/N\Z} \frac{e^{-c_0^2\pi N/4\sqrt{3}} (N^2-2(\partial + [\ell_\alpha-\ell_\beta-\lceil c_0n^{1/4}\rceil]_N)N + (\beta-\alpha))}{16\sqrt{3} N^{5/2} n^{1/4}}\\
&\hspace{0.2cm}= \frac{1}{8\sqrt[4]{3}} \erfc\rb{\frac{c_0\sqrt{\pi N}}{2\sqrt[4]{3}}} + \frac{\beta-\alpha+N-2N\partial}{16\sqrt{3N} n^{1/4}}.
\end{align*}
Putting this back into \eqref{eq:main1} yields \Cref{thm:main1a}.
\end{proof}

When $N=3,4$, for any $r\in \Z/N\Z$ there are still $N^{N-1}$ tuples $\bm\ell\in(\Z/N\Z)^N$ such that $NH(\bm\ell) = r \pmod{N}$. So the main term of the asymptotic formula \eqref{eq:main1} is still independent of the congruence class of $n\pmod{N}$. However, the count \eqref{eq:l_count} no longer holds, so the second term in the asymptotic formula \eqref{eq:main1} can genuinely depend on the congruence classes of $n$ and $\lceil c_0n^{1/4}\rceil \pmod{N}$. 

For a concrete example, consider the case where $N=3$, and $(\alpha,\beta) = (1,2)$. By a direct counting, we get the following asymptotic formula as $n\to\infty$ for $n\equiv r\pmod{N}$ and $\lceil c_0n^{1/4} \rceil \equiv s\pmod{N}$:
\begin{multline*}
d_{1,2;3;c_0n^{1/4}}(n) = e^{\pi\sqrt{n/3}} n^{-3/4}\\
\times \rb{\frac 1{8\sqrt[4]{3}} \erfc\rb{\frac{c_0\sqrt{\pi} \sqrt[4]{3}}2} + \frac{e^{-c_0^2\pi N/4\sqrt{3}} (10-6(\partial +\sigma_{r,s}))}{48 n^{1/4}} + O\rb{n^{-1/2}}},
\end{multline*}
where $\sigma_{r,s}$ is given in the following table:
\[\begin{array}{c|ccc}
\sigma_{r,s} & s=0 & s=1 & s=2\\\hline
r=0 & 0 & 2 & 1\\
r=1 & 1 & 0 & 2\\
r=2 & 2 & 1 & 0
\end{array}\]

Next we prove \Cref{thm:main2,thm:main2a}.

\begin{proof}[Proof of \Cref{thm:main2}] 
Using \Cref{thm:main1} and \eqref{eq:Hua}, we obtain for $c_0\in\R$
\[
\lim_{n\to\infty} \frac{d_{\alpha,\beta;N;c_0n^{1/4}}(n)}{d(n)} = \frac 12 \erfc\rb{\frac{c_0\sqrt{\pi N}}{2\sqrt[4]{3}}}. 
\]
Comparing with the cumulative distribution function for the normal distribution with mean $0$ and variance $\frac{2\sqrt{3}}{\pi N}$ yields \Cref{thm:main2}.
\end{proof}

\begin{proof}[Proof of \Cref{thm:main2a}]
From \cite[Theorem 1.2]{BMRS24AP} we find the asymptotic formula for the parity bias $d_{\alpha,\beta;N;0} - d_{\beta,\alpha;N;0}$:
\begin{equation}\label{eq:bias_asymp}
d_{\alpha,\beta;N;0} - d_{\beta,\alpha;N;0} = e^{\pi\sqrt{n/3}} n^{-1} \rb{\frac{\beta-\alpha}{8\sqrt{3N}} +O\rb{n^{-1/4}}}. 
\end{equation}
Meanwhile, the quantity in question is given by
\[
\lim_{n\to\infty} \frac{\big(d_{\alpha,\beta;N;an^{1/4}}(n) - d_{\alpha,\beta;N;bn^{1/4}}(n)\big) - \big(d_{\beta,\alpha;N;an^{1/4}}(n) - d_{\beta,\alpha;N;bn^{1/4}}(n)\big)}{d_{\alpha,\beta;N;0} - d_{\beta,\alpha;N;0}}.
\]
Applying \Cref{thm:main1a} and the asymptotic formula \eqref{eq:bias_asymp} yields \Cref{thm:main2a}.
\end{proof}

We end the article with some illustrations of the asymptotic behaviour of $d_{\alpha,\beta;N;c_0n^{1/4}}(n)$, as well as the distributions of the parity differences $\pd(\lambda)$ and the parity biases. \Cref{fig:1,fig:2} give the ratio between the asymptotic estimates of $d_{1,2;2;c_0n^{1/4}}(n)$ and $d_{2,1;2;c_0n^{1/4}}(n)$ to their respective true values, for $c_0 = 1$ and $2$. The estimates are coloured according to the following table:
\begin{center}
\begin{tabular}{|l|l|}\hline
$e^{\pi\sqrt{n/3}}n^{-3/4}\frac{1}{8\sqrt[4]{3}}\erfc(\frac{c_0\sqrt{\pi}}{\sqrt[4]{3}})\big/d_{1,2;2;n^{1/4}}(n)$ & Red\\
$e^{\pi\sqrt{n/3}}n^{-3/4}\frac{1}{8\sqrt[4]{3}}\erfc(\frac{c_0\sqrt{\pi}}{\sqrt[4]{3}})\big/d_{2,1;2;n^{1/4}}(n)$ & Orange\\
$e^{\pi\sqrt{n/3}}n^{-3/4}\big(\frac{1}{8\sqrt[4]{3}}\erfc(\frac{c_0\sqrt{\pi}}{\sqrt[4]{3}})+\frac{e^{-c_0^2\pi/2\sqrt{3}}(3-4\partial)}{16\sqrt{6} n^{1/4}}\big)\big/d_{1,2;2;n^{1/4}}(n)$ & Magenta\\
$e^{\pi\sqrt{n/3}}n^{-3/4}\big(\frac{1}{8\sqrt[4]{3}}\erfc(\frac{c_0\sqrt{\pi}}{\sqrt[4]{3}})+\frac{e^{-c_0^2\pi/2\sqrt{3}}(1-4\partial)}{16\sqrt{6} n^{1/4}}\big)\big/d_{2,1;2;n^{1/4}}(n)$ & Blue\\\hline
\end{tabular}
\end{center}

\begin{figure}
\includegraphics[width=11cm]{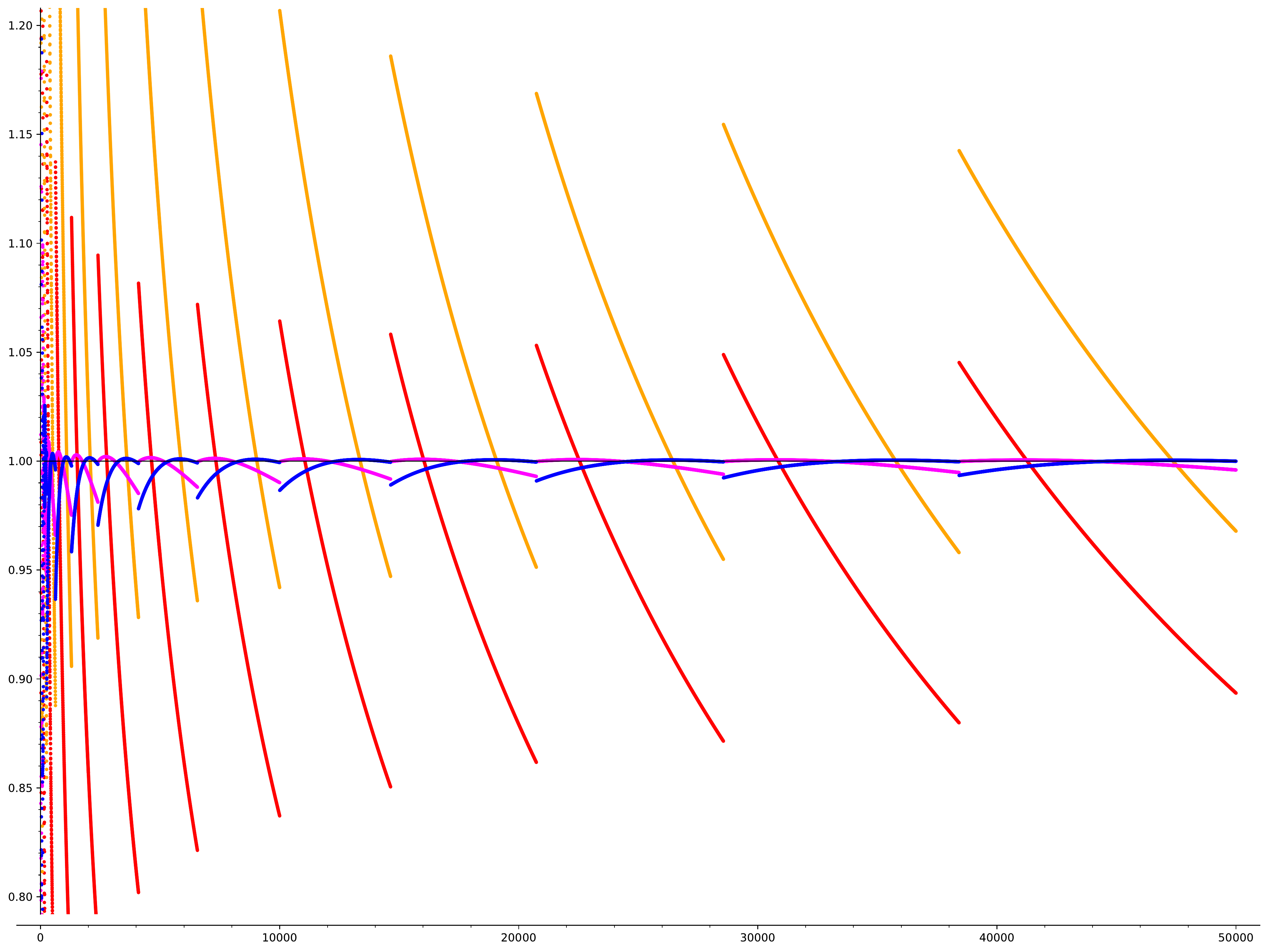}
\caption{The ratios between asymptotic estimates of $d_{1,2;2;c_0n^{1/4}}(n)$ and $d_{2,1;2;c_0n^{1/4}}(n)$ to their true values, $c_0=1$, $1\le n\le 50000$.}\label{fig:1}
\end{figure}

\begin{figure}
\includegraphics[width=11cm]{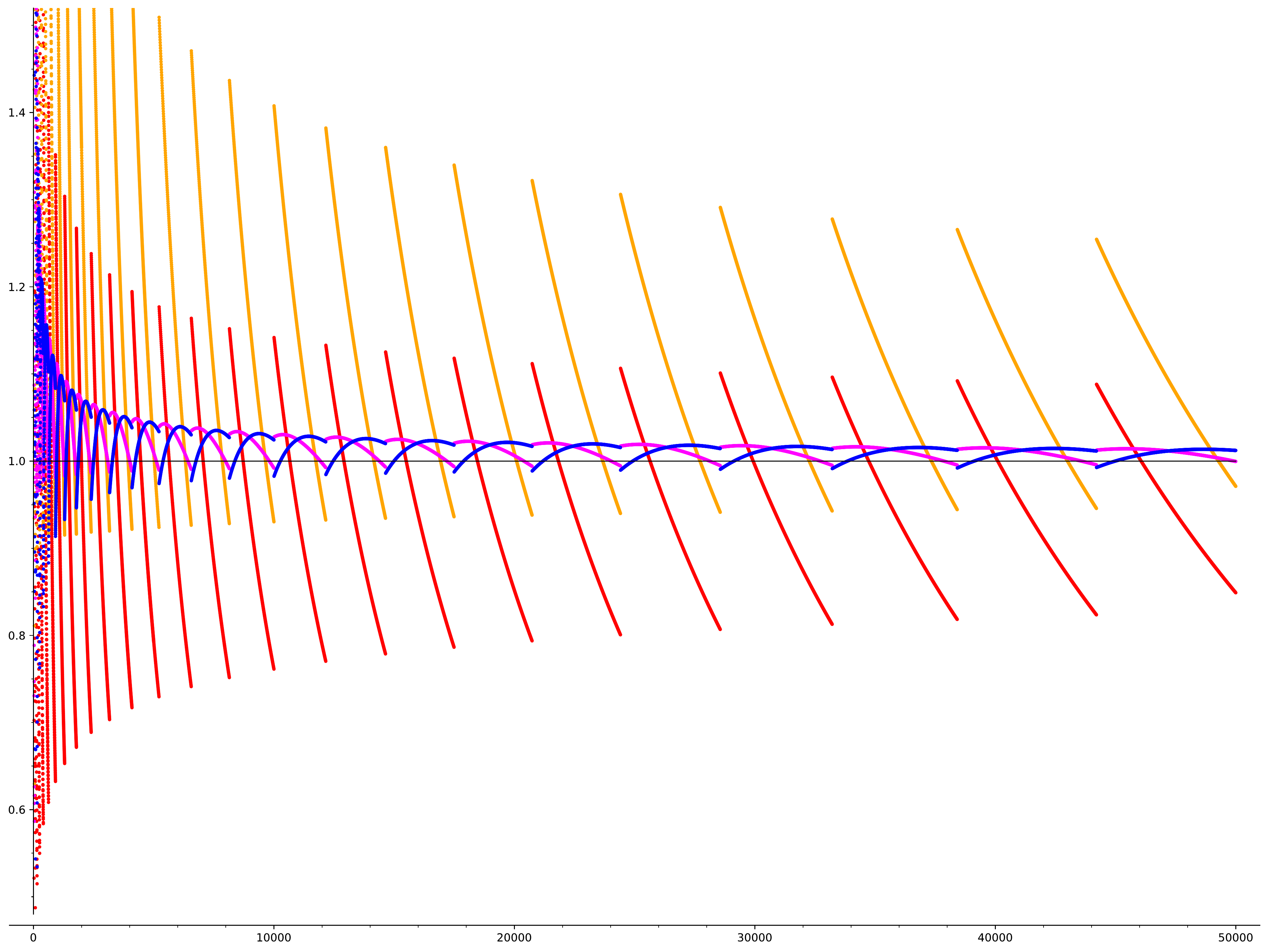}
\caption{The ratios between asymptotic estimates of $d_{1,2;2;c_0n^{1/4}}(n)$ and $d_{2,1;2;c_0n^{1/4}}(n)$ to their true values, $c_0=2$, $1\le n\le 50000$.}\label{fig:2}
\end{figure}

\Cref{fig:3,fig:4} illustrate the parity differences and parity biases of partitions of $n=50000$ into distinct parts, with the height normalised to $1$. The estimates given in \Cref{thm:main2,thm:main2a} are shown in red.

\begin{figure}
\includegraphics[width=11cm]{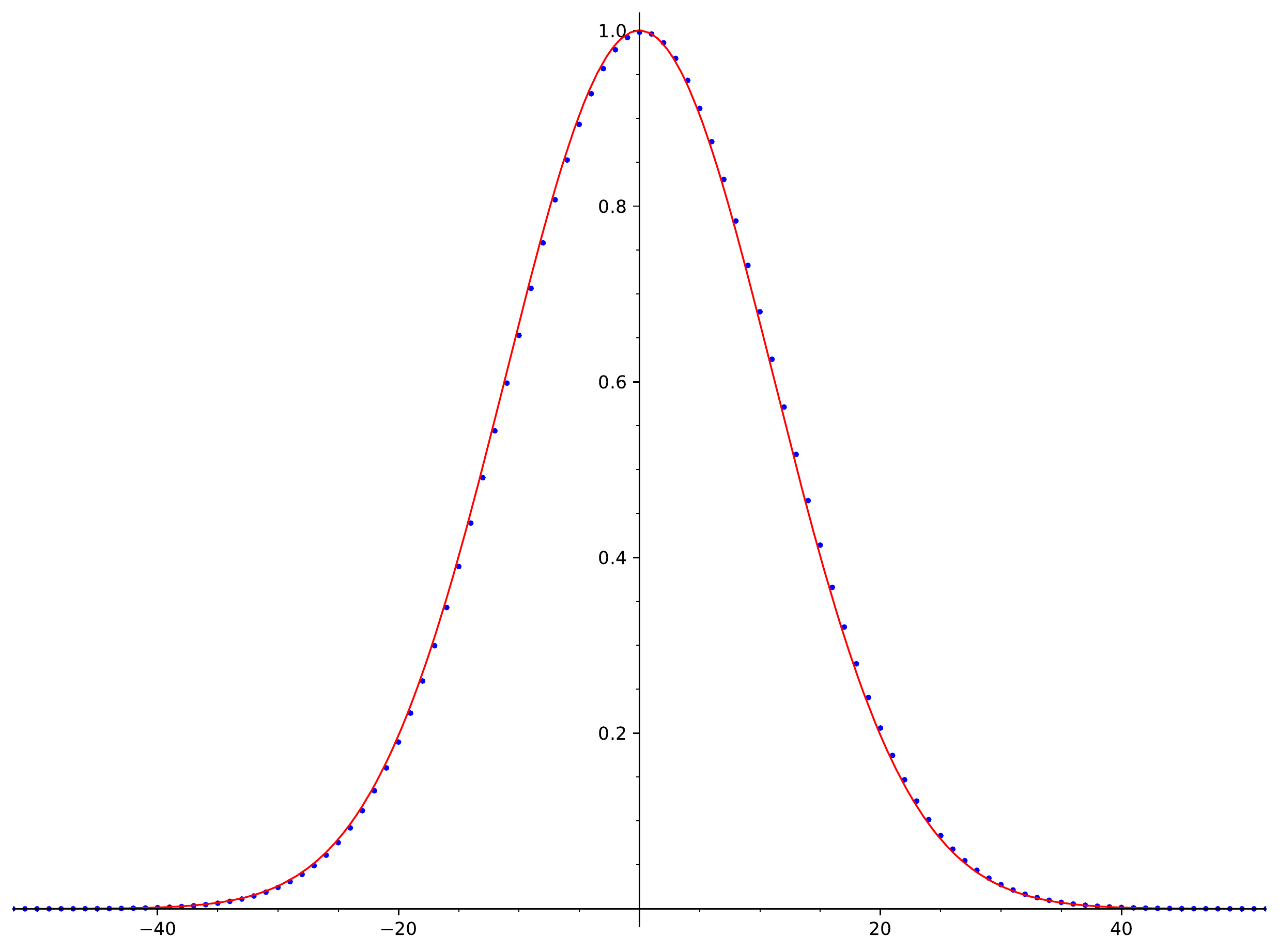}
\caption{$\#\cbm{\lambda \in \mc D(50000)}{\pd_{1,2;2}(\lambda)=k}$ for $-50\le k \le 50$, normalised.}\label{fig:3}
\end{figure}

\begin{figure}
\includegraphics[width=11cm]{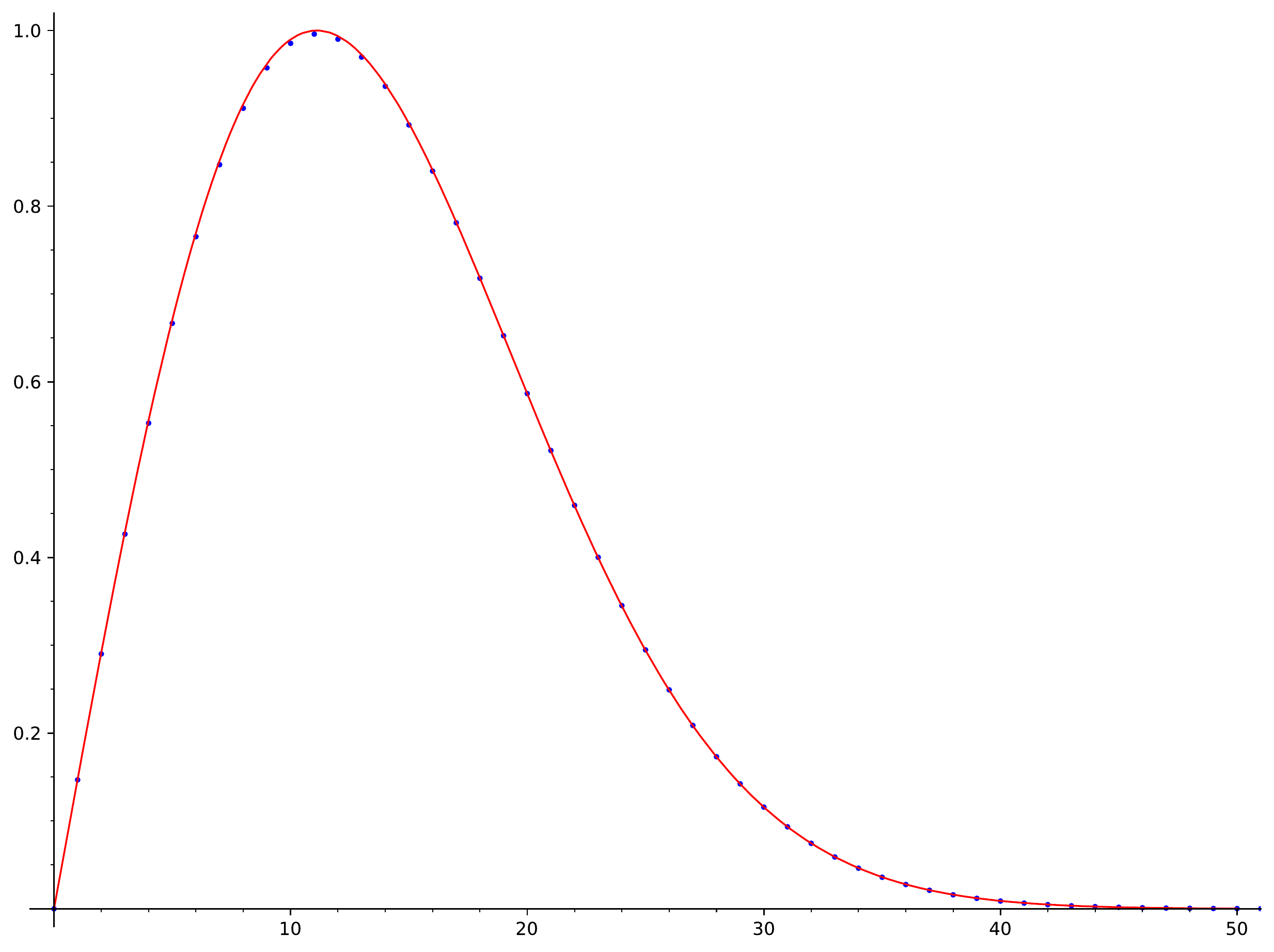}
\caption{$\#\{\lambda \in \mc D(50000)\;|\;\pd_{1,2;2}(\lambda)=k\} - \#\{\lambda \in \mc D(50000)\;|\:\pd_{2,1;2}(\lambda)=k\}$ for $0\le k \le 50$, normalised.}\label{fig:4}
\end{figure}

\newcommand{\etalchar}[1]{$^{#1}$}

\end{document}